\documentclass[reqno,11pt]{amsart}
\usepackage[margin=1in]{geometry}
\usepackage{amsmath,amssymb,amsthm,amsfonts,mathrsfs}
\usepackage{color}
\usepackage{cite}
\usepackage{enumitem}
\usepackage[colorlinks=true,linkcolor=blue,citecolor=blue,urlcolor=blue]{hyperref}
\usepackage{tikz}
\usetikzlibrary{arrows,calc}
\newtheorem {theorem}{Theorem}[section]
\newtheorem {lemma}[theorem]{{\bf Lemma}}

\newtheorem {proposition}[theorem]{{\bf Proposition}}
\theoremstyle{remark}
\newtheorem {remark}{{\bf Remark}}[section]

\theoremstyle{plain} \numberwithin {equation}{section}

\newcommand{\R}{{\mathbb R}}
\def\div{ \hbox{\rm div}\,  }
\newcommand\Z{{\mathbb{Z}}}

\def\nn{\nonumber}

\def\u{ \mathbf{u} }
\def\v{ \mathbf{v} }
\def\T{ \mathbb{T} }

\begin{document}
\title[Stability of MHD system]{Linear Growth and Nonlinear Stability of Two-Dimensional\\[1ex] MHD Couette Flow with Vertical Dissipation}

\author[T. Liang]{Tao Liang}
\address[Tao Liang]{\newline   School of Mathematics,
	South China University of Technology,
	Guangzhou, 510640, China}
\email{taolmath@163.com}

\author[J. H. Wu]{Jiahong Wu}
\address[Jiahong Wu]{\newline   Department of Mathematics, University of Notre
	Dame, Notre Dame, IN 46556, USA }
\email{jwu29@nd.edu}

\author[X. P. Zhai]{Xiaoping Zhai}
\address[Xiaoping Zhai]{\newline   School of Mathematics and Statistics, Guangdong University of Technology,
	Guangzhou, 510520, China}
\email{pingxiaozhai@163.com}

\footnote{\today}

\maketitle

\begin{abstract}
We study the two-dimensional incompressible magnetohydrodynamic system near the Couette equilibrium $\bigl((y,0)^{\rm T},(\beta,0)^{\rm T}\bigr)$
on \(\mathbb T\times\mathbb R\), in the anisotropic regime where both viscosity and magnetic diffusivity act only in the vertical direction. For the inviscid linearized problem with \(|\beta|>1/2\), we prove sharp linear-in-time growth of the vorticity and current density at the level of the time rate. In contrast, the horizontal components of the velocity and magnetic perturbations remain uniformly bounded, while the vertical components exhibit quantitative inviscid damping at the rate \(\langle t\rangle^{-1}\).
For the nonlinear problem, we introduce shear-adapted Fourier multipliers that simultaneously capture enhanced dissipation, critical-time effects, and echo-type resonant interactions. Under a suitable horizontal background magnetic field and a quantitative compatibility condition between the magnetic field strength, viscosity, and magnetic diffusivity, we establish global nonlinear stability for divergence-free perturbations satisfying $
\left\|
\bigl(
\mathbf v_{\rm in}-(y,0)^{\rm T},
\mathbf H_{\rm in}-(\beta,0)^{\rm T}
\bigr)
\right\|_{H^N}
\leq
\varepsilon_0\min\{\mu,\nu\}^{1/2}, N\geq4.
$
Moreover, the nonzero horizontal Fourier modes decay in a shear-adapted \(H^N\) norm at the enhanced-dissipation rate
$
e^{-c\min\{\mu,\nu\}^{1/3}t},
$
and the vertical velocity and magnetic components gain an additional inviscid-damping factor \(\langle t\rangle^{-1}\).
\\

\noindent\textsc{Keywords.} Stability threshold; MHD systems; Couette flow \\

  \noindent\textsc{AMS subject classifications.} 35Q35, 35B65, 76W05

\end{abstract}

\tableofcontents

\section{Introduction}
\subsection {Background and recent studies}
In this paper, we consider the two-dimensional MHD system on $(x,y) \in \T \times \R$:
\begin{eqnarray}\label{quanhs}
\left\{\begin{aligned}
&\partial_t {\v}+ {\v} \cdot\nabla {\v} - \nu \partial_{yy}^2 {\v}  + \nabla P  = {\mathbf{H} }  \cdot\nabla {\mathbf{H} },\\
& \partial_t {\mathbf{H} } + {\v} \cdot\nabla {\mathbf{H} } - \mu \partial_{yy}^2 {\mathbf{H} }  =  {\mathbf{H} } \cdot \nabla {\v},\\
& \div {\v} = \div {\mathbf{H} } = 0,\\
& {\v}(0, x,y) = {\v}_{\mathrm{in}}(x,y),\quad {\mathbf{H} }(0,x,y) = {\mathbf{H} }_{\mathrm{in}}(x, y),
\end{aligned}\right.
\end{eqnarray}
where ${\v}$, $ {\mathbf{H} }$,  and $P$ represent the velocity field, magnetic field, and pressure, respectively. The positive constants $\nu$ and
$\mu$ are the viscosity and magnetic diffusivity coefficients. In \eqref{quanhs}, both dissipative terms act only in the vertical direction.

The magnetohydrodynamic equations describe the coupled evolution of an
electrically conducting fluid and its magnetic field. The magnetic
field is transported and stretched by the fluid motion, while its
feedback on the velocity is represented by the Lorentz force. We study
the dynamics of \eqref{quanhs} near the stationary Couette state
\begin{align*}
	{\v}_s = ( y, 0)^{\mathrm{T}}, \quad  {\mathbf{H} }_s = ( \beta, 0)^{\mathrm{T}},
\end{align*}
where $\beta\in\R\setminus\{0\}$ is the strength of the imposed
horizontal magnetic field.

Introducing the perturbations
$$ \u = {\v}-{\v}_s, \   \mathbf{b} =  {\mathbf{H} }  - {\mathbf{H} }_s,$$
 and setting
\begin{align*}
	\u_{\mathrm{in}}
	=\v_{\mathrm{in}}-\v_s,
	\qquad
	\mathbf b_{\mathrm{in}}
	=\mathbf H_{\mathrm{in}}-\mathbf H_s,
\end{align*}
 System \eqref{quanhs} can be rewritten in terms of $(\mathbf{u}, \mathbf{b})$ as
\begin{eqnarray}\label{rewrite}
\left\{\begin{aligned}
&\partial_t \u+ y\partial_x \u + (u^2,0)^{\mathrm{T}} - \beta \partial_x \mathbf{b}  + \u \cdot\nabla \u - \nu \partial_{yy}^2 \u  + \nabla \left( {2\partial_x}{(-\Delta)^{-1}} u^2 \right)  \\
 &\qquad\qquad\qquad+ \nabla \big( {\partial_i \partial_{i^{\prime}}}{(-\Delta)}^{-1} \big( u^i u^{i^{\prime}} - b^i b^{i^{\prime}}\big) \big)  =  \mathbf{b} \cdot \nabla \mathbf{b},\\
& \partial_t \mathbf{b} + y\partial_x \mathbf{b} - (b^2,0)^{\mathrm{T}} - \beta \partial_x \u + \u \cdot\nabla \mathbf{b} - \mu \partial_{yy}^2 \mathbf{b}   = \mathbf{b} \cdot \nabla \u,\\
& \div \u = \div \mathbf{b} =  0,\\
& \u(0, x,y) = \u_{\mathrm{in}}(x,y),\qquad \mathbf{b}(0,x,y) = \mathbf{b}_{\mathrm{in}}(x, y),
\end{aligned}\right.
\end{eqnarray}
where $i, i^{\prime} \in \{1,2\}$, $\partial_i, \partial_{i^{\prime}} \in {\partial_x, \partial_y}$, and the Einstein summation convention over repeated indices $i,i' \in \{1,2\} $ is used.

\vskip .1in
When $\mathbf{H} = 0$, replacing $``-\partial_{yy}^2 "$ by $``-\Delta"$, \eqref{quanhs} is reduced to the  Navier-Stokes equations.
The transition from laminar to turbulent flow has been a fundamental challenge in fluid dynamics since Reynolds' pioneering experiments \cite{Re}. Although certain laminar flows remain linearly stable at all Reynolds numbers \cite{DR,Rom}, they can exhibit nonlinear instability when subjected to finite amplitude perturbations at high Reynolds numbers \cite{DHB,TA}. Kelvin first observed \cite{Kel} that the basin of attraction of laminar flow shrinks as $Re \to \infty$, making nonlinear instability possible.
 This phenomenon motivates the transition-threshold problem,
formulated by Trefethen et al.~\cite{T}: determine the minimum
disturbance amplitude that triggers instability and its scaling with
the Reynolds number.
 The transition threshold
problem was formulated by Bedrossian, Germain and Masmoudi \cite{BGM2017} as follows: Given a norm
$\|\cdot\|_{X},$  determine a $\gamma=\gamma(X)$  so that
\begin{align*}
		&\|\u_{\mathrm{in}}\|_{X}\ll \nu^{\gamma} \Longrightarrow\ \mathrm{stability},\\
		&\|\u_{\mathrm{\mathrm{in}}}\|_{X}\gg \nu^{\gamma}\Longrightarrow\ \mathrm{instability}.
	\end{align*}
The exponent $\gamma$ is referred to as the transition threshold. Significant progress has been made on the transition threshold problem for shear flows in Navier-Stokes equations.
\vskip .1in
On the domain $\T\times\R$, the following important results are known:
\begin{itemize}
	\item If $X$ is Gevrey class $2^-$, then \cite{BMV2016} showed $\gamma\leq 0$, while \cite{DM2023} established $\gamma\geq 0$.
	\item If $X$ is the Sobolev space $H^{\log}_xL^2_y$, then \cite{BVW2018,zhaoweiren2020cpde} obtained $\gamma\leq \tfrac12$, and \cite{LiMasmoudiZhao2022critical} proved $\gamma\geq \tfrac12$.
	\item If $X$ is the Sobolev space $H^\sigma$ with $\sigma\geq 2$, then \cite{zhaoweiren2019,wei2023} showed $\gamma\leq \tfrac13$.
	\item If $X$ is Gevrey class $\tfrac1s$ with $s\in[0,\tfrac12]$, then \cite{LMZ2022G} proved $\gamma\leq \tfrac{1-2s}{3(1-s)}$.
\end{itemize}

In addition, related results are also available for 2D Couette flow in a finite channel (see, e.g., \cite{BHIW2023,BHIW,CLWZ2020}), and for 3D Couette flow (see, e.g., \cite{BGM2017,BGM2020,BGM2015,Chenwei2020,wei2020}).

The presence of a magnetic field substantially enriches and complicates
the stability theory of Couette flow. In contrast to the Navier--Stokes
equations, the velocity and magnetic perturbations are coupled through
Alfv\'enic interactions. Depending on the strength and orientation of the
background magnetic field, such coupling may weaken the classical mixing
and inviscid damping mechanisms associated with Couette flow
\cite{K24,KZ23}, while a sufficiently strong magnetic field may also
produce a stabilizing effect \cite{GBM09,L20,ZZZ21}. Quantifying this
competition has been the subject of several recent works on stability
thresholds for MHD Couette flows.

We summarize the related results in three dimensions, two dimensions, and partially dissipative settings.

\begin{itemize}
  \item \textbf{3D domain $\T\times\R\times\T$.}
  In the fully dissipative case, namely after replacing
  $``-\partial_{yy}^2"$ by $``-\Delta"$, Liss \cite{L20} studied the
  Sobolev stability threshold for the 3D MHD equations near Couette flow
  on $\T\times\R\times\T$ under the assumptions $\nu=\mu>0$ and a
  Diophantine condition on the direction of the imposed magnetic field.
  More precisely, for $\sigma\in\mathbb R\backslash\mathbb Q$ satisfying
  a generic Diophantine condition, the threshold $\gamma=1$ was obtained.
  Using the same approach, Liss further asserted that the threshold
  becomes $\gamma=\frac{4}{3}$ for arbitrary $\sigma\in\mathbb R$ under
  the same dissipative assumptions. These results were later extended by
  Rao, Zhang, and Zi \cite{raoyulin} to the case of unequal viscosity and
  magnetic diffusivity, namely $\mu\neq\nu$. For rationally aligned
  magnetic fields, Wang, Xu, and Zhang \cite{wangxuzhang} obtained the
  corresponding Sobolev stability threshold $\gamma=1$. Recently, Dolce
  \cite{dolce2026} identified a nonlinear transient-growth regime and
  characterized a sharp threshold interval $\left[\frac56,1\right]$.

  \vskip .1in

  \item \textbf{2D domain $\T\times\R$.}
  In two dimensions, the fully dissipative MHD system near Couette flow
  has also been extensively studied. Replacing $``-\partial_{yy}^2"$ by
  $``-\Delta"$, Chen and Zi \cite{chenting} established the Sobolev
  stability threshold $\gamma=\frac56+$ for shear flows near Couette flow
  under the condition $\nu=\mu$. Dolce \cite{D24} obtained the threshold
  $(\gamma_1,\gamma_2)=(\frac23,0)$ in the fluid-dominated regime
  $0<\mu^3\lesssim\nu\leq\mu$. In the case $\mu\leq\nu$, Knobel
  \cite{K24} proved the threshold
  $(\gamma_1,\gamma_2)=(\frac{1}{12},\frac12)$ when
  $\nu^3\lesssim\mu\leq\nu$, and also showed instability together with
  norm inflation of size $\nu\mu^{-\frac13}$ when $\mu\lesssim\nu^3$.
  These results were subsequently generalized to arbitrary
  $\nu,\mu\in(0,1]$ by Wang and Zhang \cite{Wang2024arxiv} for the
  vorticity formulation and by Jin, Ren, and Wei \cite{JRW24} for the
  velocity formulation.

  \vskip .1in

  \item \textbf{Degenerate and partially dissipative MHD models.}
  Stability and instability mechanisms become even more delicate when
  one of the dissipative effects is absent or acts only in a preferred
  direction. In the Gevrey-2 class with $\nu=0$ and $\mu>0$, Zhao and Zi
  \cite{zhaozi} studied the Euler--MHD system with magnetic diffusion and
  obtained the nonlinear threshold $(\gamma_1,\gamma_2)=(0,1)$ in a
  strong uniform magnetic field. More recently, Dolce, Knobel, and
  Zillinger \cite{Dolcek} proved algebraic instability and large norm
  inflation of the magnetic current in Gevrey classes for the viscous but
  non-resistive case $\nu>0,\mu=0$. Knobel and Zillinger \cite{KZ23}
  investigated the Sobolev stability threshold for the two-dimensional
  MHD equations with horizontal magnetic dissipation. These works show
  that the stability threshold is highly sensitive not only to the
  relative sizes of $\nu$ and $\mu$, but also to the direction and
  degeneracy of the dissipative mechanism.
\end{itemize}

\subsection{Main results}
In this paper, we consider the anisotropic regime in which both viscosity and magnetic
diffusion act only in the vertical direction. This dissipation is
compatible with Couette mixing: for the background shear
$(y,0)^{\mathrm T}$, the transport operator
$\partial_t+y\partial_x$ shifts the vertical frequency of a nonzero
horizontal mode according to
$
	\xi\longmapsto\xi-kt.
$
Thus, the shear transfers nonzero horizontal modes to large vertical
frequencies, where the dissipation $\partial_{yy}^2$ becomes effective.
The model therefore retains an enhanced-dissipation mechanism despite
the absence of horizontal diffusion.

Such anisotropic diffusion is also consistent with models of magnetized
fluids having direction-dependent transport coefficients. Here the
vertical direction is selected by the shear gradient, while the imposed
magnetic field is horizontal. This formulation  isolates the interplay between Couette mixing, vertical diffusion, and Alfv\'enic coupling.

The inviscid linearization reveals the mechanism underlying the
nonlinear analysis. Its vorticity and current
density exhibit transient growth, whereas the corresponding velocity
and magnetic fields undergo inviscid damping. This linear structure
motivates the time-dependent Fourier multipliers used in the nonlinear
analysis.

To  formulate this linear dynamics, we  introduce the scalar
vorticity and the scalar current density
\begin{align*}
	\omega
	=
	\nabla\times\u
	=
	\partial_xu^2-\partial_yu^1,
	\qquad
	j
	=
	\nabla\times\mathbf b
	=
	\partial_xb^2-\partial_yb^1.
\end{align*}
The linearization of the perturbation system around the Couette state,
after setting $\nu=\mu=0$, is given in the vorticity--current
formulation by
\begin{equation}\label{rewrite1}
\left\{
\begin{aligned}
	&\partial_t\omega+y\partial_x\omega
	-\beta\partial_xj=0,
	\\
	&\partial_tj+y\partial_xj
	-\beta\partial_x\omega
	+2\partial_x\partial_y\phi=0,
	\\
	&\u=\nabla^\perp\psi
	=(-\partial_y\psi,\partial_x\psi),
	\qquad
	\Delta\psi=\omega,
	\\
	&\mathbf b=\nabla^\perp\phi
	=(-\partial_y\phi,\partial_x\phi),
	\qquad
	\Delta\phi=j,
	\\
	&\omega(0,x,y)=\omega_{\mathrm{in}}(x,y),
	\qquad
	j(0,x,y)=j_{\mathrm{in}}(x,y).
\end{aligned}
\right.
\end{equation}

For a function $f=f(x,y)$ on $\T\times\R$, we denote its
$x$-average and its nonzero horizontal component by
\begin{align*}
	f_0(y)
	:=
	\frac{1}{2\pi}\int_{\T}f(x,y)\,dx,
	\qquad
	f_{\neq}(x,y)
	:=
	f(x,y)-f_0(y),
\end{align*}
respectively. Thus, $f_{\neq}$ consists precisely of the nonzero
horizontal Fourier modes. Since shear-induced mixing acts only on these
modes, all the growth and damping statements below concern
$(\omega_{\neq},j_{\neq})$ and the corresponding components of
$(\u_{\neq},\mathbf b_{\neq})$.

We use the following Fourier convention. For
$(k,\xi)\in\Z\times\R$, define
\begin{align*}
	\widehat f_k(t,\xi)
	=\mathcal{F}f(t,k,\xi)=
	\widehat f(t,k,\xi)
	:=
	\frac{1}{2\pi}
	\int_{\T\times\R}
	f(t,x,y)e^{-i(kx+\xi y)}
	\,dx\,dy,
\end{align*}
with inverse transform
\begin{align*}
	f(t,x,y)
	=
	\frac{1}{2\pi}
	\sum_{k\in\Z}
	\int_{\R}
	\widehat f_k(t,\xi)e^{i(kx+\xi y)}
	\,d\xi.
\end{align*}
For $s\in\mathbb{R}$, we define the Sobolev space
$H^s(\mathbb{T}\times\mathbb{R})$ through the norm
\begin{align*}
\|f\|_{H^s}^2
:=
\sum_{k\in\mathbb{Z}}
\int_{\mathbb{R}}
\langle k,\xi\rangle^{2s}
|\widehat f_k(\xi)|^2
\,d\xi,
\qquad
\langle k,\xi\rangle
:=
\bigl(1+k^2+\xi^2\bigr)^{1/2}.
\end{align*}

The first result gives linear growth of the vorticity and current together with inviscid damping of the vertical velocity
and magnetic components.

\begin{theorem}[Linear growth and inviscid damping]\label{thm1.1}
Let $|\beta|>1/2$, and let
$(\omega,j)$ solve \eqref{rewrite1} with
\begin{align*}
	(\omega_{\mathrm{in}},j_{\mathrm{in}})
	\in H^1(\T\times\R).
\end{align*}
There exists $C=C(\beta)>0$ such that, for every $t\geq0$,
\begin{align}\label{linear_growth_result}
	C^{-1}\langle t\rangle
	\left\|
	\partial_x^2(-\Delta)^{-1}
	(\omega_{\mathrm{in},\neq},j_{\mathrm{in},\neq})
	\right\|_{L^2}
	&\leq
	\|(\omega_{\neq},j_{\neq})(t)\|_{L^2}
	\leq
	C\langle t\rangle
	\|(\omega_{\mathrm{in},\neq},
	j_{\mathrm{in},\neq})\|_{L^2}.
\end{align}
Moreover,
\begin{equation}\label{linear_horizontal_bound}
	\|(u_{\neq}^1,b_{\neq}^1)(t)\|_{L^2}
	\leq
	C
	\|(\omega_{\mathrm{in},\neq},
	j_{\mathrm{in},\neq})\|_{L^2},
\end{equation}
and
\begin{equation}\label{linear_vertical_damping}
	\|(u_{\neq}^2,b_{\neq}^2)(t)\|_{L^2}
	\leq
	C\langle t\rangle^{-1}
	\|(\omega_{\mathrm{in},\neq},
	j_{\mathrm{in},\neq})\|_{H^1}.
\end{equation}
In \eqref{linear_growth_result}, $(-\Delta)^{-1}$ is understood on
the nonzero horizontal modes.
\end{theorem}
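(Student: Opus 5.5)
The plan is to pass to the moving frame of the Couette shear and reduce \eqref{rewrite1} to a two-by-two ODE system for each Fourier mode $k\neq0$. I then control that system with a weighted energy, and the hypothesis $|\beta|>1/2$ is exactly what makes the energy coercive. Set $\Omega(t,k,\eta)=\widehat\omega(t,k,\eta-kt)$ and $J(t,k,\eta)=\widehat{j}(t,k,\eta-kt)$, and let
\begin{align*}
p(t)=k^2+(\eta-kt)^2 .
\end{align*}
Since $\widehat\phi=-J/p$ and $\widehat{\partial_x\partial_y\phi}=-k(\eta-kt)\widehat\phi$ in these variables, and since $\partial_tp=-2k(\eta-kt)$, the system becomes
\begin{align*}
\partial_t\Omega=i\beta kJ,\qquad \partial_tJ=i\beta k\Omega+\frac{\partial_tp}{p}J .
\end{align*}
The natural unknowns are $X=\Omega/\sqrt p$ and $Y=J/\sqrt p$. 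With $a:=\partial_tp/(2p)=-k(\eta-kt)/p$ they satisfy
\begin{align*}
\partial_tX=-aX+i\beta kY,\qquad \partial_tY=i\beta kX+aY .
\end{align*}
The key pointwise fact is $|a|\le|k|/2$, since $|k||\eta-kt|\le p/2$. It also helps to write $a/k=-s/(1+s^2)$ with $s=(\eta-kt)/k$, so that $\partial_t(a/k)=k\,\frac{1-s^2}{(1+s^2)^2}$, which is integrable in time with an integral bounded independently of $(k,\eta)$.

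\textbf{Step 1: modified energy.} The plain energy satisfies $\frac{d}{dt}(|X|^2+|Y|^2)=-2a(|X|^2-|Y|^2)$, which has no sign. I will add a corrector that cancels this term. Compute $\frac{d}{dt}\,2\,\mathrm{Im}(\bar XY)=2\beta k(|X|^2-|Y|^2)$. This suggests
\begin{align*}
E=|X|^2+|Y|^2+\frac{a}{\beta k}\,2\,\mathrm{Im}(\bar XY).
\end{align*}
Because $|a/(\beta k)|\le 1/(2|\beta|)<1$, $E$ is comparable to $|X|^2+|Y|^2$ with constants depending only on $\beta$. This is where $|\beta|>1/2$ enters. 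In $\frac{d}{dt}E$ the sign-indefinite terms cancel. What remains is $\partial_t(a/(\beta k))\cdot 2\,\mathrm{Im}(\bar XY)$ plus the derivative of $\mathrm{Im}(\bar XY)$ produced by the $\pm aX,\pm aY$ terms, which I must check to be a multiple of $a\,\mathrm{Im}(\bar XY)$ or to vanish. I expect the main obstacle to be this bookkeeping, namely showing that every leftover term is bounded by $g(t)E$ with $\int_0^\infty g\,dt\le C(\beta)$ uniformly in $(k,\eta)$. For the explicit $\partial_t(a/k)$ term this follows from the integrability noted above. If a term of size $a^2/(\beta k)$ survives instead, I would choose the corrector coefficient $\lambda(t)$ as a solution of a small ODE rather than the exact ratio $a/(\beta k)$. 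Gr\"onwall then gives $E(t)\approx E(0)$, and hence $|X(t)|^2+|Y(t)|^2\approx |X(0)|^2+|Y(0)|^2$.

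\textbf{Step 2: growth bounds.} From Step 1, $|\Omega(t)|^2+|J(t)|^2\approx \frac{p(t)}{p(0)}(|\widehat\omega_{\rm in}|^2+|\widehat j_{\rm in}|^2)$. The upper bound uses $p(t)/p(0)\lesssim\langle t\rangle^2$. For the lower bound I use the elementary inequality
\begin{align*}
\sqrt{p(t)p(0)}\gtrsim k^2\langle t\rangle \qquad (|k|\ge1),
\end{align*}
which follows from $|k|\,|\eta-kt|+|k|\,|\eta|\ge k^2t$. It gives $\sqrt{p(t)/p(0)}\gtrsim\langle t\rangle\,k^2/p(0)$, which is exactly the multiplier of $\partial_x^2(-\Delta)^{-1}$ at $t=0$. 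Summing over $k\neq0$ and integrating in $\eta$ (Plancherel) yields \eqref{linear_growth_result}.

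\textbf{Step 3: velocity and magnetic components.} In the moving frame $|\widehat{u^1}|=|\eta-kt||\Omega|/p\le|X|$, and similarly for $b^1$, so \eqref{linear_horizontal_bound} follows from $|X(t)|\lesssim|X(0)|\le|\widehat\omega_{\rm in}|$. For the vertical components, $|\widehat{u^2}|=|k||\Omega|/p=|k||X|/\sqrt p\lesssim\langle t-\eta/k\rangle^{-1}|X(0)|$. Finally, $\langle t-\eta/k\rangle^{-1}\lesssim\langle\eta/k\rangle\langle t\rangle^{-1}\le\langle k,\eta\rangle\langle t\rangle^{-1}$ loses exactly one derivative, which gives \eqref{linear_vertical_damping}.
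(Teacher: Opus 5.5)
Your proposal is correct and is essentially the paper's proof. It uses the same sheared frame, the same symmetric variables $\Omega/\sqrt p$ and $J/\sqrt p$, and the same corrector: your $\frac{a}{\beta k}\,2\operatorname{Im}(\bar XY)$ coincides with the paper's $\frac{2}{\beta}\frac{\xi-kt}{p}\operatorname{Im}(Z\bar Q)$. It then closes the same way, via Gr\"onwall with a uniformly integrable rate and the inequality $\sqrt{p(t)p(0)}\gtrsim k^2\langle t\rangle$ for the lower bound.

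The bookkeeping you left open closes at once, because the $\pm a$ terms cancel exactly in $\frac{d}{dt}(\bar XY)=i\beta k(|X|^2-|Y|^2)$. You should still fix two slips:
\begin{itemize}
\item With $s=(\eta-kt)/k$, one has $a=-s/(1+s^2)$ and $\partial_t(a/k)=\frac1k\frac{1-s^2}{(1+s^2)^2}$. Your displayed factor $k$ would make $\int_0^\infty|\partial_t(a/k)|\,dt$ grow like $|k|$, whereas the correct formula gives at most $2/|k|$.
\item Step 1 only yields $|X(t)|^2+|Y(t)|^2\lesssim|X(0)|^2+|Y(0)|^2$, not $|X(t)|\lesssim|X(0)|$. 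This is enough, since the theorem bounds the pairs $(u^1,b^1)$ and $(u^2,b^2)$ by the pair $(\omega_{\rm in},j_{\rm in})$.
\end{itemize}
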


\vskip .1in
\begin{remark}
For the two-dimensional Euler equations linearized around Couette flow,
the vorticity is transported by the background shear and its $L^2$
norm is conserved. In \eqref{rewrite1}, the coupling terms
$
	-\beta\partial_xj$,
	$
	-\beta\partial_x\omega,
$
and the nonlocal term
$
	2\partial_x\partial_y\phi$,
	$
	\Delta\phi=j,
$
alter this structure. In particular, \eqref{linear_growth_result} shows
linear-in-time growth of the vorticity--current pair. The rate is sharp
at the level of time growth, since the upper and lower bounds are both
of order $\langle t\rangle$, with the lower bound expressed in the
weaker norm
\begin{align*}
	\left\|
	\partial_x^2(-\Delta)^{-1}
	(\omega_{\mathrm{in},\neq},j_{\mathrm{in},\neq})
	\right\|_{L^2}.
\end{align*}
This growth is absent from the linearized Euler dynamics and is a
principal difficulty in the nonlinear MHD problem.
\end{remark}

\begin{remark}
The growth of $(\omega_{\neq},j_{\neq})$ is consistent with the
inviscid-damping estimate \eqref{linear_vertical_damping}. Vorticity and
current contain one derivative of the velocity and magnetic fields and
are amplified by the shear, whereas the Biot--Savart law gains one
derivative. Phase mixing therefore yields decay of
$(u_{\neq}^2,b_{\neq}^2)$, even though
$(\omega_{\neq},j_{\neq})$ grows. The horizontal components
$(u_{\neq}^1,b_{\neq}^1)$ remain bounded by
\eqref{linear_horizontal_bound}.
\end{remark}

We now turn to the nonlinear system with vertical viscosity and
vertical magnetic diffusion:
\begin{equation}\label{main}
\left\{
\begin{aligned}
	&\partial_t\u+y\partial_x\u+(u^2,0)^{\mathrm T}
	-\beta\partial_x\mathbf b
	+\u\cdot\nabla\u-\nu\partial_{yy}^2\u
	\\
	&\qquad
	+\nabla\left(
	2\partial_x(-\Delta)^{-1}u^2
	\right)
	+\nabla\big(
	\partial_i\partial_{i'}(-\Delta)^{-1}
	\bigl(u^iu^{i'}-b^ib^{i'}\bigr)
	\big)
	=\mathbf b\cdot\nabla\mathbf b,
	\\
	&\partial_t\mathbf b+y\partial_x\mathbf b-(b^2,0)^{\mathrm T}
	-\beta\partial_x\u
	+\u\cdot\nabla\mathbf b-\mu\partial_{yy}^2\mathbf b
	=\mathbf b\cdot\nabla\u,
	\\
	&\div\u=\div\mathbf b=0,
	\\
	&\u(0,x,y)=\u_{\mathrm{in}}(x,y),
	\qquad
	\mathbf b(0,x,y)=\mathbf b_{\mathrm{in}}(x,y).
\end{aligned}
\right.
\end{equation}
Here and below, repeated indices $i,{i'}\in\{1,2\}$ are summed over.

\vskip .1in
The main challenge in establishing the global stability of System \eqref{main} lies in controlling the nonlinear interactions, which are potentially amplified by the linear transient growth identified in Theorem \ref{thm1.1}. Although the dissipative mechanisms are highly degenerate--acting exclusively in the vertical direction--the background shear flow efficiently tilts the nonzero horizontal modes, transporting them to high vertical frequencies where the operator $\partial_{yy}^2$ becomes strongly elliptic. This interplay between Couette mixing and anisotropic diffusion generates an \emph{enhanced dissipation} effect. Our second main result demonstrates that if the background magnetic field is sufficiently strong, this mechanism rigorously suppresses the destabilizing nonlinear resonances (echoes), yielding global stability.

\begin{theorem}[Nonlinear stability]\label{thm1.2}
Let $\mu,\nu\in(0,1]$, $N\geq4$,  and $\vartheta\in(0,1/6)$. Assume that
\begin{equation}\label{betatiaojian}
   |\beta|>7\pi+1,\qquad \frac{6(\mu+\nu)}{|\beta|\sqrt{\mu\nu}}
    \leq1-6\vartheta.
\end{equation}
There exists  $\varepsilon_0>0$ depending only on  $N,\beta, \vartheta,$ such that if
\begin{equation}\label{chuzhixiao}
    \bigl\|
        (\mathbf u_{\mathrm{in}},\mathbf b_{\mathrm{in}})
    \bigr\|_{H^N}
    \leq\varepsilon_0\kappa^{1/2},\qquad\kappa:=\min\{\mu,\nu\},
\end{equation}
then the perturbation \eqref{main} admits a unique global solution $(\mathbf u,\mathbf b)$ satisfying
\begin{align*}
    (\mathbf u,\mathbf b)
    \in
    C\bigl(
        [0,\infty);
        H^N(\mathbb T\times\mathbb R)
    \bigr),
\qquad
    \sqrt{\nu}\,\partial_y\mathbf u,\,
    \sqrt{\mu}\,\partial_y\mathbf b
    \in
    L_{\mathrm{loc}}^2
    \bigl(
        [0,\infty);
        H^N(\mathbb T\times\mathbb R)
    \bigr).
\end{align*}
Moreover, there exists a constant
$
    C=C(N,\beta,\vartheta)>0,
$
independent of $t$, $\mu$, and $\nu$, such that, for every $t\geq0$,
\begin{align}
    \bigl\|\Lambda_t^N(\mathbf u,\mathbf b)(t)\bigr\|_{L^2}
    &\leq
    C\varepsilon_0\kappa^{1/2},
    \label{GWP}\\
    \bigl\|(u_{\neq}^1,b_{\neq}^1)(t)\bigr\|_{L^2}
    +
    \langle t\rangle
    \bigl\|(u_{\neq}^2,b_{\neq}^2)(t)\bigr\|_{L^2}
    &\leq
    C\varepsilon_0\kappa^{1/2}
    e^{-\frac{1}{64}\kappa^{1/3}t},
    \label{invisciddamping}\\
    \bigl\|
        \Lambda_t^N
        (\mathbf u_{\neq},\mathbf b_{\neq})(t)
    \bigr\|_{L^2}
    &\leq
    C\varepsilon_0\kappa^{1/2}
    e^{-\frac{1}{64}\kappa^{1/3}t}.
    \label{enhancedis}
\end{align}
\end{theorem}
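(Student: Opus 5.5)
This theorem is nonlinear, but the plan below begins with the linear mechanism since the nonlinear argument is built on it. Two caveats up front: the operator $\Lambda_t^N$ appearing in \eqref{GWP} is not defined in the text shown here, so I take it to be a shear-adapted time-dependent Fourier multiplier of order $N$. The quantitative role of the condition $|\beta|>7\pi+1$ is also a guess on my part.

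\textbf{Linear structure.} Work on Fourier modes $k\neq0$ in the sheared frame $\eta=\xi+kt$. Set $p(t)=k^2+(\eta-kt)^2$ and symmetrize with $Z=\sqrt p\,\widehat{u^2}$ and $Q=\sqrt p\,\widehat{b^2}$. The inviscid linear system \eqref{rewrite1} then becomes a $2\times2$ ODE. It has an antisymmetric Alfv\'en coupling $\pm i\beta k$ and lower-order shear terms of size $\frac{|k(\eta-kt)|}{p}$, whose time integral is bounded by $\pi$.

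\textbf{Nonlinear energy functional.} I would define $\Lambda_t^N$ as $\langle k,\xi\rangle^N$ times a product of three bounded multipliers:
\begin{itemize}
\item an enhanced-dissipation multiplier $m_1$ with $\partial_t m_1/m_1\simeq \kappa^{1/3}$ for $|t-\eta/k|\lesssim\kappa^{-1/3}$, after Bedrossian--Vicol--Wang;
\item a ghost multiplier $m_2=\exp\bigl(-\int \frac{k^2}{p}\bigr)$, which absorbs the shear terms $\frac{k(\eta-kt)}{p}$;
\item a cross term $\frac{1}{\beta k}\,\mathrm{Re}(Z\bar Q)$, added to produce a damping of $Z$ and $Q$ by $\frac{|k\eta|}{p}$.
\end{itemize}
The requirement $|\beta|>7\pi+1$ is what I expect keeps the resulting energy $E(t)$ coercive, equivalent to $\|\Lambda_t^N(\mathbf u,\mathbf b)\|_{L^2}^2$.

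\textbf{Energy inequality.} Next I would derive
\[
\frac{d}{dt}E+\nu\|\partial_y\Lambda_t^N\mathbf u\|^2+\mu\|\partial_y\Lambda_t^N\mathbf b\|^2+c\kappa^{1/3}\|\Lambda_t^N(\mathbf u_{\neq},\mathbf b_{\neq})\|^2+\mathrm{CK}\le \mathcal N.
\]
Here $\mathrm{CK}$ collects the good Cauchy--Kovalevskaya terms coming from $\partial_t m_i$. When the cross term is differentiated, the dissipation produces errors of size $\frac{\mu+\nu}{|\beta|}|\eta-kt|^2\,|Z||Q|$. These are absorbed by the two dissipations via Young's inequality, which costs the factor $\frac{\mu+\nu}{|\beta|\sqrt{\mu\nu}}$; the compatibility condition in \eqref{betatiaojian}, with room $6\vartheta$ to spare, is exactly what makes this work. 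I would prove the analogous enhanced-dissipation estimate for $e^{c\kappa^{1/3}t}E_{\neq}$ in parallel. The zero mode only feels vertical diffusion.

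\textbf{Bootstrap.} On $[0,T^*]$ I would assume \eqref{GWP}--\eqref{enhancedis} with constant $2C$ and improve them to $C$. The nonlinear terms $\mathcal N$ split into zero--nonzero and nonzero--nonzero interactions.
\begin{itemize}
\item \emph{Zero--nonzero, e.g.\ $u^1_0\partial_x f_{\neq}$.} Commutator estimates handle these, using that $m_i$ are Lipschitz in $\eta$ with constants compatible with the $\mathrm{CK}$ terms. The $\mathbf b_0$ terms are handled the same way.
\item \emph{Nonzero--nonzero.} Enhanced dissipation supplies time integrability $\int e^{-c\kappa^{1/3}t}\,dt\sim\kappa^{-1/3}$, and the vertical dissipation supplies $\sqrt{\nu}\partial_y$ control. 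Together they close the estimate provided the size is $\lesssim\kappa^{1/2}$; this is the source of the smallness assumption \eqref{chuzhixiao}.
\end{itemize}

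\textbf{Derivative loss and the main obstacle.} The pressure and the terms $u^2\partial_y$ lose $\partial_y$. I would handle this by writing $u^2_{\neq}=\partial_x\psi$ and using the inviscid damping $\langle t\rangle^{-1}$, which is obtained from $\Lambda_t^N$-control at the cost of one derivative; this is why $N\ge4$ is needed. The main obstacle is the echo-type resonance near the critical times $t\approx\eta/k$, where the transient growth of Theorem~\ref{thm1.1} amplifies the reaction terms. I would try to absorb this through the $\mathrm{CK}$ term of $m_1$ together with $\kappa^{1/3}$ time-localization.

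\textbf{Conclusion.} The decay bounds \eqref{invisciddamping} follow by applying $|k|/p^{1/2}\lesssim\langle t\rangle^{-1}\langle k,\eta\rangle$ to the $\Lambda_t^N$ bounds. Local well-posedness combined with a continuity argument then yields the global solution.
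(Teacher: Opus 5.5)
\textbf{The linear cancellation fails as you set it up.} Your overall architecture matches the paper's. In the paper, $\Lambda_t^N$ is simply the sheared multiplier with symbol $(1+k^2+(\xi+kt)^2)^{N/2}=\langle k,\eta\rangle^N$ in your variable, and the extra multipliers sit in a separate weight $\mathcal W$. The problem is that the shear terms are not time integrable. Indeed $\frac{k(\eta-kt)}{p}=-\frac12\partial_t\log p$, and with $s=\eta/k-t$ one gets $\int_0^\infty\frac{|k(\eta-kt)|}{p}\,dt=\int_{-\infty}^{\eta/k}\frac{|s|}{1+s^2}\,ds=\infty$; only $\frac{k^2}{p}$ integrates to at most $\pi$. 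At the velocity level these are the lift-up terms $\frac{2k\xi}{k^2+\xi^2}\bigl(|\widehat{\mathbf u}_k|^2-|\widehat{\mathbf b}_k|^2\bigr)$ coming from $(u^2,0)^{\mathrm T}$ and $-(b^2,0)^{\mathrm T}$.

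Neither of your multipliers can absorb them. For $m_2$, the CK weight $k^2/p$ is smaller than the lift-up coefficient by the factor $|k|/|\eta-kt|$. For $m_1$, on $|k|\le|\xi|\ll\kappa^{-1/3}|k|^{1/3}$ the coefficient $\approx|k|/|\xi|$ dominates both the enhanced-dissipation rate and the vertical dissipation (note $\mu\approx_\beta\nu$ here), and its integral along a characteristic over that range grows like $\log(1/\kappa)$. So these terms must be cancelled exactly, and your cross term cannot do this. In your own system the shear contributions cancel in $\frac{d}{dt}(Z\bar Q)$, and the Alfv\'en contribution is $i\beta k(|Q|^2-|Z|^2)$, which is purely imaginary. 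Hence $\operatorname{Re}(Z\bar Q)$ is an exact invariant of the inviscid linear flow, and its derivative contains nothing that offsets the shear term.

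Relative to $\tfrac12(|Z|^2+|Q|^2)$, the correct correction is $\frac{\eta-kt}{\beta p}\operatorname{Im}(Z\bar Q)$. Its Alfv\'en derivative is $\frac{k(\eta-kt)}{p}(|Q|^2-|Z|^2)=-\frac12\frac{p_t}{p}(|Q|^2-|Z|^2)$, which cancels the shear term identically. What remains is $\frac1\beta\partial_t\bigl(\frac{\eta-kt}{p}\bigr)\operatorname{Im}(Z\bar Q)=O\bigl(\frac{|k|}{|\beta|p}\bigr)(|Z|^2+|Q|^2)$, which is integrable. In the nonlinear proof the same weight arises by pairing vertical with horizontal components, $\operatorname{Re}\langle\frac{1}{ik\beta}\Lambda_t^N\widehat b^2_k,\mathcal W_k\Lambda_t^N\widehat u^1_k\rangle-\operatorname{Re}\langle\frac{1}{ik\beta}\Lambda_t^N\widehat u^2_k,\mathcal W_k\Lambda_t^N\widehat b^1_k\rangle$, because $\widehat u^1_k=-\frac{\xi}{k}\widehat u^2_k$. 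The leftovers $\mathscr L_1,\mathscr L_2,\mathscr L_3$ are then absorbed by $\operatorname{Dis}_1$ and the CK terms. That absorption (positivity of $c_1,c_2$) is where $|\beta|>7\pi+1$ is used; coercivity of $E$ needs only $|\beta|>1/2$.

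\textbf{The critical-time (echo) interaction is not closed.} You leave it as ``I would try'', and the mechanism you name does not reach the $\kappa^{1/2}$ threshold. In the paper the critical term is $R_2$. It arises in the vertical transport commutator, on the region where the advecting mode $(k-\ell,\xi-\eta)$ carries the top sheared frequency. There the bound $|\eta|\le\Lambda_t(\ell,\eta)+|\ell|t$ leaves a factor $t|\ell|$ on a top-order $\widehat{u^2_{k-\ell}}$, and trading $t$ for a $\Lambda_t$ derivative would lose a derivative. Enhanced dissipation turns $t$ into $\kappa^{-1/3}$ via $te^{-\frac1{64}\kappa^{1/3}t}\lesssim\kappa^{-1/3}$. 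If both top-order nonzero factors are then controlled only through the CK term of $m_1$, i.e.\ $\|\sqrt{\mathcal W}\Lambda_t^N\mathbf u_{\neq}\|_{L^2}^2\lesssim\kappa^{-1/3}\operatorname{CK}_1$, you get $\kappa^{-2/3}X\operatorname{CK}_1$. At $X\approx\kappa^{1/2}$ this is $\kappa^{-1/6}\operatorname{CK}_1$, which cannot be absorbed.

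The paper instead uses a $\kappa$-independent bound for the vertical component, $|\widehat{u^2_k}|^2=\langle\xi/k\rangle^{-2}|\widehat{\mathbf u}_k|^2\le\delta'(\xi/k)|\widehat{\mathbf u}_k|^2$. This is supplied by the critical-time multiplier $\mathcal M^{(2)}$ (via $\operatorname{CK}_2$) when $\kappa^{1/2}|k|\lesssim1$, and by $\operatorname{CK}_1$ when $\kappa^{1/3}|k|^{2/3}\gtrsim1$. It is combined with the time-dependent echo multiplier $\mathcal M^{(3)}$, whose weight $\mathcal G_k=(-\partial_t+k\partial_\xi)\mathcal M^{(3)}_k\ge0$ is designed to dominate the resonant convolution kernel after Cauchy--Schwarz. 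The outcome is the remainder $\mathcal Z_8$, which is $\lesssim c(\operatorname{CK}_1+\operatorname{CK}_2+\operatorname{CK}_3)$ under $X\le c\kappa^{1/2}$. Note that your $m_2$, whose CK weight $k^2/p=\langle\xi/k\rangle^{-2}$ is exactly the vertical-component weight, is the right tool both here and for the nonlocal pressure error $\mathscr L_3$. You assigned it to the lift-up terms instead, where it cannot work.
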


\vskip .1in
\begin{remark}
The proof is carried out at the level of the velocity and magnetic
fields. At the vorticity--current level, Theorem~\ref{thm1.1} exhibits
linear-in-time growth, and the nonlinear terms carry additional
derivatives. These features make the resonant interactions less
transparent and lead to derivative losses in a direct energy argument.
The velocity formulation retains the smoothing provided by the
Biot--Savart law and is better suited to the enhanced-dissipation
estimates. The multiplier $\mathcal{M}_k^{(3)}(t,\xi)$ introduced below is designed to
control the remaining echo-type interactions.
\end{remark}

\begin{remark}\label{rmk:beta_threshold}
The lower bounds imposed on $|\beta|$ in Theorem~\ref{thm1.2} are sufficient conditions dictated by our symmetrized energy framework, rather than sharp physical thresholds for nonlinear stability.

First, the condition $\frac{6(\mu+\nu)}{|\beta|\sqrt{\mu\nu}} \leq 1-6\vartheta$ is required to absorb the linear cross-dissipation error induced by unequal diffusivities ($\mu \neq \nu$). Specifically, it ensures that the effective coefficient of the physical dissipation satisfies $\frac16-\frac{\mu+\nu}{|\beta|\sqrt{\mu\nu}} \geq \vartheta>0$.

Second, the absolute bound $|\beta|>7\pi+1$ guarantees the strict positivity of the Cauchy--Kovalevskaya (CK) coefficients. In particular, it ensures $\frac{1}{6\pi} - \frac{7\pi+1}{6\pi|\beta|}>0$ and $\frac{1}{6D_\delta} - \frac{12D_\delta+1}{12D_\delta|\beta|}>0$, corresponding to $\operatorname{CK}_1$ and $\operatorname{CK}_2$, respectively. These numerical constraints come from the particular choice of multipliers and estimates used in the proof. Relaxing $|\beta|$ closer to the linear stability threshold $|\beta|>1/2$ via a more refined symmetrizer remains an interesting open question.
\end{remark}

\begin{remark}\label{rmk:decay-rate}
The numerical constant $1/64$ used in the exponential weight satisfies
${1}/{64}<{1}/{(12\pi)}
    $ and
    ${1}/{64}<1/6.$
These two inequalities are used in the multiplier coercivity estimate
\eqref{pre8} and in the linear multiplier-error estimate
\eqref{quanju38}, respectively. This choice is independent of the
positivity condition for $c_1$, which follows solely from
$|\beta|>7\pi+1$.
\end{remark}

\vskip .1in
\subsection{Strategy of the proof of Theorem~\ref{thm1.2}}
\label{subsec:proof-strategy}

We briefly explain the main ideas in the proof of
Theorem~\ref{thm1.2}. The main difficulty is the interaction between linear transient growth and nonlinear resonances generated by the
Couette shear.  Although the viscosity and
the magnetic diffusivity act only in the vertical direction, the shear
continuously transfers nonzero horizontal modes toward large vertical
frequencies. This mixing mechanism converts the vertical dissipation into
an effective enhanced dissipation on the time scale
\begin{align*}
t_{\rm ED}\sim \kappa^{-1/3},
\qquad
\kappa:=\min\{\mu,\nu\}.
\end{align*}
The proof is based on a time-dependent weighted energy adapted
simultaneously to this enhanced dissipation, the critical-time geometry,
and the Alfv\'enic coupling.

\medskip
\noindent
\textbf{Time-dependent multipliers.}
We introduce three Fourier multipliers
\begin{align*}
\mathcal{M}^{(1)}_k(\xi),\qquad
\mathcal{M}^{(2)}_k(\xi),\qquad
\mathcal{M}^{(3)}_k(t,\xi),
\end{align*}
whose precise definitions are given in Section~\ref{sec:preliminaries}. They are designed to
capture three complementary mechanisms.

The first multiplier is associated with enhanced dissipation and satisfies,
schematically,
\begin{align*}
\kappa \xi^2+k\partial_\xi \mathcal{M}^{(1)}_k(\xi)
\gtrsim
\kappa^{1/3}|k|^{2/3},
\qquad k\neq 0.
\end{align*}
Thus the combination of Couette mixing and vertical diffusion provides an
effective horizontal coercivity of order
$\kappa^{1/3}|\partial_x|^{2/3}$.

The second multiplier is localized near the critical-time region and
generates a nonnegative weight
\begin{align*}
k\partial_\xi \mathcal{M}^{(2)}_k(\xi)
\gtrsim \delta'_k(\xi)\geq 0,
\end{align*}
which is used to control interactions associated with the critical
variable $\xi/k$.

The third multiplier is time-dependent and is constructed so that
\begin{align*}
\mathcal{G}_k(t,\xi)
:=
\bigl(-\partial_t+k\partial_\xi\bigr)
\mathcal{M}^{(3)}_k(t,\xi)
\geq 0.
\end{align*}
The weight $\mathcal{G}_k$ is adapted to the resonant convolution geometry and controls the echo-type nonlinear interactions.

To incorporate the expected decay of the nonzero horizontal modes, we set
\begin{align*}
\mathcal{A}_k(t,\xi)
:=
\exp\Big(
\frac{1}{32}\kappa^{1/3}t\,\mathbf 1_{\{k\neq0\}}
\Big)
\end{align*}
and define
\begin{align*}
\mathcal{W}_k(t,\xi)
:=
\mathcal{A}_k(t,\xi)
\Big(
\mathcal{M}^{(1)}_k(\xi)
+\mathcal{M}^{(2)}_k(\xi)
+\mathcal{M}^{(3)}_k(t,\xi)
+\frac23
\Big).
\end{align*}
The individual multipliers are uniformly bounded, and hence
\begin{align*}
\mathcal{W}_k(t,\xi)\approx \mathcal{A}_k(t,\xi).
\end{align*}
Moreover, for $\sigma\in\{\nu,\mu\}$, the physical dissipation together
with the material derivative of $\mathcal{W}$ gives the  coercive
estimate
\begin{align}
2\sigma\xi^2 \mathcal{W}_k(t,\xi)
+\bigl(-\partial_t+k\partial_\xi\bigr)\mathcal{W}_k(t,\xi)
\gtrsim
\mathcal{A}_k(t,\xi)
\Big(
\sigma\xi^2
+\kappa^{1/3}|k|^{2/3}
+\delta'_k(\xi)
+\mathcal{G}_k(t,\xi)
\Big).
\end{align}
These four terms correspond respectively to the physical vertical
dissipation, enhanced dissipation, critical-time control, and echo
control.

\medskip
\noindent
\textbf{Shear-adapted energy.}
To follow the Couette characteristics, we use  the time-dependent elliptic
operator introduced in \cite{Deng2021JFA}
\begin{align*}
\Lambda_t^2
:=
1-\partial_x^2-(\partial_y+t\partial_x)^2,
\end{align*}
whose Fourier symbol is
\begin{align*}
\Lambda_t^2(k,\xi)
=
1+k^2+(\xi+kt)^2.
\end{align*}
The key identity
\begin{align*}
(\partial_t-k\partial_\xi)\Lambda_t^N(k,\xi)=0
\end{align*}
shows that $\Lambda_t^N$ commutes with the linear Couette transport at the
Fourier level. We therefore measure the solution by the weighted norm
\begin{align*}
X(t)
:=
\left\|
\sqrt{\mathcal{W}}\,\Lambda_t^N(\mathbf u,\mathbf b)(t)
\right\|_{L^2}.
\end{align*}

The standard energy $X^2(t)$ alone is not sufficient because of the
Alfv\'enic coupling between the velocity and the magnetic field. Guided by
the linearized system, we add suitable cross terms involving the horizontal
and vertical components and define a modified energy $  E(t)$.
For $|\beta|$ sufficiently large, these cross terms are perturbative and
\begin{align*}
  E(t)\approx X^2(t).
\end{align*}
More importantly, the modified energy cancels the leading linear coupling
responsible for the transient growth appearing in Theorem~\ref{thm1.1}.

After differentiating $  E(t)$ and using the multiplier coercivity,
we obtain an energy inequality of the schematic form
\begin{align*}
\begin{aligned}
\frac{d}{dt}  E(t)
&+
c_D\, \operatorname{Dis}_1(t)
+c_1\,\mathrm{CK}_1(t)
+c_2\,\mathrm{CK}_2(t)
+c_3\,\mathrm{CK}_3(t)
\leq
C\sum_{i=1}^{8}\mathcal{Z}_i(t),
\end{aligned}
\end{align*}
where
\begin{align*}
\mathrm{CK}_1
\sim
\kappa^{1/3}
\left\|
|\partial_x|^{1/3}
\sqrt{\mathcal{W}}\Lambda_t^N(\mathbf u,\mathbf b)
\right\|_{L^2}^2,
\end{align*}
while $\mathrm{CK}_2$ and $\mathrm{CK}_3$ denote the spacetime controls
generated by $\delta'_k(\xi)$ and $\mathcal G_k(t,\xi)$, respectively.

The condition
\begin{align*}
\frac{6(\mu+\nu)}
{|\beta|\sqrt{\mu\nu}}
\leq 1-6\vartheta
\end{align*}
ensures that the error caused by the mismatch between the viscosity and
the magnetic diffusivity can be absorbed by the physical dissipation.
The additional lower bound on $|\beta|$ guarantees that the remaining
linear errors are absorbed by the CK terms.

\medskip
\noindent
\textbf{Nonlinear estimates and bootstrap closure.}
The nonlinear terms are estimated using incompressibility and finite-difference bounds for the multiplier
$\mathcal W_k(t,\xi)\Lambda_t^{2N}(k,\xi)$. For transport-type nonlinearities, the leading
derivative is removed by incompressibility and the remaining commutator
contains differences of the form
\begin{align*}
\mathcal W_k(t,\xi)\Lambda_t^{2N}(k,\xi)
-
\mathcal W_\ell(t,\eta)\Lambda_t^{2N}(\ell,\eta).
\end{align*}
The estimates established in Section~\ref{sec:preliminaries} allow these differences to be
controlled by the physical dissipation and the three CK quantities.

Away from the resonant region, the Sobolev weight and enhanced dissipation
are sufficient. Near critical times, we use the weight $\delta'_k(\xi)$,
while $\mathcal G_k(t,\xi)$ controls the resonant echo interactions. The zero horizontal modes are treated separately by means of the
weighted energy and the physical vertical dissipation. In this way, all
nonlinear contributions satisfy a schematic estimate
\begin{align*}
\sum_{i=1}^{8}\mathcal{Z}_i(t)
\lesssim
\kappa^{-1/2} X(t)
\bigl(
\operatorname{Dis}_1(t)
+\mathrm{CK}_1(t)
+\mathrm{CK}_2(t)
+\mathrm{CK}_3(t)
\bigr),
\end{align*}
up to terms of the same perturbative type.

We finally impose the bootstrap assumption
\begin{align*}
X(t)\leq c\,\kappa^{1/2},
\qquad 0\leq t\leq T,
\end{align*}
with $c>0$ sufficiently small. The nonlinear terms can then be absorbed
by the dissipative terms on the left-hand side of the energy inequality.
Choosing the initial constant $\varepsilon_0$ sufficiently small gives an
improvement of the bootstrap bound, and the standard continuation
argument yields global existence.

Since
\begin{align*}
\mathcal W_k(t,\xi)
\approx
e^{\frac{1}{32}\kappa^{1/3}t},
\qquad k\neq0,
\end{align*}
the uniform weighted estimate gives
\begin{align*}
\left\|
\Lambda_t^N(\mathbf u_{\neq},\mathbf b_{\neq})(t)
\right\|_{L^2}
\lesssim
\varepsilon_0\kappa^{1/2}
e^{-\frac{1}{64}\kappa^{1/3}t}.
\end{align*}
Finally, the divergence-free condition yields the additional
$\langle t\rangle^{-1}$ inviscid-damping factor for the vertical
components in Theorem~\ref{thm1.2}.

\subsection{Organization of the paper}\label{subsec:organization}
Section~\ref{sec:preliminaries} establishes the multiplier, commutator, and convolution estimates used in the nonlinear analysis.
Section~\ref{proofth1} proves Theorem~\ref{thm1.1} through sheared coordinates, symmetric variables, and a modified energy adapted to the Alfv\'enic coupling.
Section~\ref{proofth2} proves Theorem~\ref{thm1.2} by combining the weighted energy estimates with local well-posedness and a bootstrap argument.

\subsection*{Notation}

Throughout the paper, $C>0$ denotes a generic constant that may change from line to line. Unless otherwise stated, $C$ depends only on the fixed parameters $N$, $\beta$, and $\vartheta$, and is  independent of $t, \mu, \nu,$ and $\kappa$. Here $\vartheta \in (0,1/6)$ quantifies the uniform margin in the magnetic-field condition
\begin{align*}
  \frac{6(\mu+\nu)}{|\beta|\sqrt{\mu\nu}} \leq 1-6\vartheta.
\end{align*}

\vskip .1in
We use the notation $(k,\xi)$ for Fourier variables in both Sections~\ref{proofth1}
and~\ref{proofth2}, but with different coordinate conventions. More precisely, in
Section~\ref{proofth1}, $(k,\xi)$ denotes the Fourier variables associated with the
sheared coordinates
\begin{align*}
    X=x-ty,\qquad Y=y,
\end{align*}
whereas in Section~\ref{proofth2}, $(k,\xi)$ denotes the Fourier variables associated
with the original coordinates $(x,y)$.
Thus, in Section~\ref{proofth1}, the symbol of $-\Delta_L$ is
$
    k^2+(\xi-kt)^2,
$
while in Section~\ref{proofth2}, the symbol of $\Lambda_t^2$ is
$
    1+k^2+(\xi+kt)^2.
$
The meaning of $(k,\xi)$ will always be clear from the section under
consideration.

\vskip .1in
We write $a\lesssim b$ if $a\leq Cb$, and $a\approx b$ if both $a\lesssim b$ and $b\lesssim a$. Dependence of the implicit constant on an additional parameter $q$ is indicated by $a\lesssim_q b$.

\vskip .2in
\section{Preliminary lemmas}\label{sec:preliminaries}

In this section, we define the time-dependent Fourier multipliers and prove the bounds used in the nonlinear energy
estimates.

Let $\chi\in C^\infty([0,\infty))$ be a fixed smooth cutoff satisfying
\begin{align*}
0\leq \chi\leq 1,\qquad
\chi(r)=1\quad\text{for }0\leq r\leq 1,\qquad
\chi(r)=0\quad\text{for }r\geq 2.
\end{align*}

For $k\neq0$, we define the first multiplier as
\begin{align*}
\mathcal{M}_k^{(1)}(\xi)
=\frac{1}{3\pi}
\arctan\left(
\kappa^{1/3}|k|^{-1/3}\operatorname{sgn}(k)\,\xi
\right),
\end{align*}
and set $\mathcal{M}_0^{(1)}(\xi)=0$.

Although the horizontal Fourier frequency $k$ is ultimately
restricted to $\mathbb Z$, in estimates involving
$\partial_k\mathcal M_k^{(1)}(\xi)$ we regard the defining formula for
$\mathcal M_k^{(1)}(\xi)$ as a smooth function of the real parameter
$k\in\mathbb R\setminus\{0\}$. More precisely, for
$q\in\mathbb R\setminus\{0\}$, we set
\begin{align*}
\mathcal M_q^{(1)}(\xi)
:=
\frac{1}{3\pi}
\arctan\left(
\kappa^{1/3}|q|^{-1/3}\operatorname{sgn}(q)\,\xi
\right).
\end{align*}
We first differentiate this smooth extension with respect to $q$ and
then restrict the resulting estimate to the integer frequencies
$q=k\in\mathbb Z\setminus\{0\}$. We continue to write
$\partial_k\mathcal M_k^{(1)}(\xi)$ for this restricted derivative in the following argument.

For $k\neq0$, the second multiplier is defined by
\begin{align*}
\mathcal{M}_k^{(2)}(\xi)
=\frac{1}{6D_\delta}
\chi(\kappa^{1/2}|k|)
\delta\left({\xi}/{k}\right),
\qquad
\mathcal{M}_0^{(2)}(\xi)=0,
\end{align*}
where the primitive function $\delta$ is determined by $\delta(0)=0$ and
\begin{align*}
\delta'(x)
=
\frac{1}
{\langle x\rangle[\log(1+\langle x\rangle)]^{2}},
\end{align*}
with the normalization constant $D_{\delta}$ given by
\begin{align*}
D_{\delta}
:=\lim_{x\to+\infty}\delta(x)
=\int_{0}^{\infty}
\frac{ds}
{\sqrt{1+s^{2}}\,
\bigl[\log\bigl(1+\sqrt{1+s^{2}}\bigr)\bigr]^{2}}
\approx 3.123.
\end{align*}

Finally, we define the echo multiplier $\mathcal{M}_k^{(3)}(t,\xi)$ as
\begin{align*}
\mathcal{M}_k^{(3)}(t,\xi)
:=
-\frac{1}{6D_N}\chi(\kappa^{1/2}|k|)
\sum_{\ell\neq k}
&\int_0^t\int_{\mathbb R}
\frac{1}
{
\bigl(\langle\ell\rangle^{2N-2}+|\eta+\ell\tau|^{2N-2}\bigr)
}
\notag\\
&\quad\times
\frac{d\eta\,d\tau}
{\left(
1+
\left|
\frac{\xi-\eta+k(t-\tau)}{k-\ell}
\right|
\right)
\left[
\log\left(
1+
\left\langle
\frac{\xi-\eta+k(t-\tau)}{k-\ell}
\right\rangle
\right)
\right]^{2}
},
\end{align*}
where the  normalization  constant $D_N$ is
\begin{align*}
D_N
:=
\frac{2\pi}{N-1}\,
\csc\left(\frac{\pi}{2N-2}\right)
\sum_{\ell\in\mathbb Z}
\langle \ell\rangle^{-(2N-3)}\int_{0}^{\infty}
\frac{{d}s}
{(1+s)\bigl[\log\bigl(1+\sqrt{1+s^{2}}\bigr)\bigr]^{2}}.
\end{align*}

Unlike $\mathcal{M}_0^{(1)}(\xi)$ and $\mathcal{M}_0^{(2)}(\xi)$, we do not set $\mathcal{M}_0^{(3)}(t,\xi)$ to zero. Nonlinear interactions between two nonzero horizontal modes (with frequencies $\ell$ and $k-\ell=-\ell$) can cascade into the zero horizontal mode ($k=0$). Therefore, the multiplier governing these resonant echo interactions must remain active at $k=0$.

We further define the localized weight
\begin{align*}
\delta_k'(\xi)
:=
\mathbf{1}_{k\neq0}\,
\chi(\kappa^{1/2}|k|)
\delta'\left({\xi}/{k}\right),
\end{align*}
and the non-negative dissipation symbol associated with $\mathcal{M}_k^{(3)}(t,\xi)$,
\begin{align*}
\mathcal{G}_k(t,\xi)
:=
\bigl(-\partial_t+k\partial_\xi\bigr)\mathcal{M}_k^{(3)}(t,\xi).
\end{align*}
By the transport identity $\bigl(-\partial_t+k\partial_\xi\bigr) \frac{\xi-\eta+k(t-\tau)}{k-\ell} = 0$, the derivative acts exclusively on the upper limit of the $\tau$-integral, yielding
\begin{align*}
\mathcal{G}_k(t,\xi)
=\frac{1}{6D_N}
\chi(\kappa^{1/2}|k|)
\sum_{\ell\neq k}
\int_{\mathbb R}
\frac{d\eta}
{
\bigl(\langle\ell\rangle^{2N-2}+|\eta+\ell t|^{2N-2}\bigr)
\left(
1+
\left|
\frac{\xi-\eta}{k-\ell}
\right|
\right)
\left[
\log\left(
1+
\left\langle
\frac{\xi-\eta}{k-\ell}
\right\rangle
\right)
\right]^{2}}.
\end{align*}
Thus $\mathcal{G}_k(t,\xi) \geq 0$.

To formulate the uniform energy, for $t\geq0$, let
\begin{align*}
\mathcal{A}_k(t,\xi)
:=
\exp\left(
\frac{1}{32}\kappa^{1/3}t\,\mathbf{1}_{k\neq0}
\right),
\end{align*}
and define the full multiplier
\begin{align*}
\mathcal{W}_k(t,\xi)
:=
\mathcal{A}_k(t,\xi)
\left(
\mathcal{M}_k^{(1)}(\xi)
+
\mathcal{M}_k^{(2)}(\xi)
+
\mathcal{M}_k^{(3)}(t,\xi)
+
\frac23
\right).
\end{align*}
We use $\mathcal A$, $\mathcal W$, and $\mathcal G$ to denote the
Fourier multipliers with symbols
$\mathcal A_k(t,\xi)$, $\mathcal W_k(t,\xi)$, and
$\mathcal G_k(t,\xi)$, respectively.

The following lemma establishes the boundedness and coercivity
properties of the multipliers. In particular, it shows that
$\mathcal W_k(t,\xi)$ is positive and uniformly comparable to
$\mathcal A_k(t,\xi)$.
\begin{lemma}\label{chengziyinli}
Assume $N\geq4$. The multipliers introduced above satisfy the following estimates uniformly in $t\geq0$, $k\in\mathbb Z$, and $\xi\in\mathbb R$:
\begin{align}\label{pre1}
 \bigl|\mathcal{M}_k^{(1)}(\xi)\bigr|\le\frac16,\qquad\bigl|\mathcal{M}_k^{(2)}(\xi)\bigr|\le\frac16,
 \qquad -\frac16 \le \mathcal{M}_k^{(3)}(t,\xi) \le 0.
\end{align}
Consequently,
\begin{align}\label{chengzijie}
\frac16
\leq
\mathcal M_k^{(1)}(\xi)
+
\mathcal M_k^{(2)}(\xi)
+
\mathcal M_k^{(3)}(t,\xi)
+
\frac23
\leq1,
\end{align}
and hence
\begin{align}\label{WA-equivalence}
\frac16\mathcal A_k(t,\xi)
\leq
\mathcal W_k(t,\xi)
\leq
\mathcal A_k(t,\xi).
\end{align}
Moreover, for every sufficiently regular function $f$,
\begin{align}\label{pre2}
 \sum_{k\in\mathbb Z}\int_{\mathbb R}
 \left(\kappa\xi^2+k\partial_\xi \mathcal{M}_k^{(1)}(\xi)\right)
 |\widehat f(k,\xi)|^2\,d\xi
 \geq
 \frac{1}{6\pi}\kappa^{1/3}
 \bigl\||\partial_x|^{1/3}f\bigr\|_{L^2}^2,
\end{align}
and
\begin{align}\label{pre3}
 0
 \leq
 \sum_{k\in\mathbb Z}\int_{\mathbb R}
 k\partial_\xi \mathcal{M}_k^{(2)}(\xi)|\widehat f(k,\xi)|^2\,d\xi
 =\frac{1}{6D_{\delta}}
 \sum_{k\in\mathbb Z}\int_{\mathbb R}
 \delta_k'(\xi)|\widehat f(k,\xi)|^2\,d\xi .
\end{align}
For $k\neq0$, the derivatives satisfy
\begin{align}\label{pre4}
 \bigl|\partial_k \mathcal{M}_k^{(1)}(\xi)\bigr| + \bigl|\partial_\xi \mathcal{M}_k^{(2)}(\xi)\bigr|
  \lesssim |k|^{-1},\quad \bigl|\partial_\xi \mathcal{M}_k^{(1)}(\xi)\bigr|
 &\lesssim \kappa^{1/3}|k|^{-1/3},
 \quad
 \bigl|\partial_\xi \mathcal{M}_k^{(3)}(t,\xi)\bigr|
 \lesssim |k|^{-1}.
\end{align}
If $k,\ell\in\mathbb Z\setminus\{0\}$ satisfy $k\ell>0$ and $|k-\ell|\leq\frac12|\ell|$, then
\begin{align}\label{pre5}
 &\bigl|\mathcal{M}_k^{(1)}(\xi)-\mathcal{M}_\ell^{(1)}(\eta)\bigr|
 \lesssim
 \frac{|k-\ell|}{|k|}
 +\kappa^{1/3}|k|^{-1/3}|\xi-\eta|.
\end{align}
Finally, for every $\sigma\in\{\nu,\mu\}$, we have the coercivity bound
\begin{align}\label{pre8}
 &2\sigma\xi^2\mathcal{W}_k(t,\xi)
 +\bigl(-\partial_t+k\partial_\xi\bigr)\mathcal{W}_k(t,\xi)\nn\\
 &\quad\geq
 \mathcal{A}_k(t,\xi)
 \Big(
 \frac16\sigma\xi^2
 +\frac{1}{6\pi}\kappa^{1/3}|k|^{2/3}
 +\frac{1}{6D_\delta}\delta_k'(\xi)
 +\mathcal{G}_k(t,\xi)
 \Big).
\end{align}
\end{lemma}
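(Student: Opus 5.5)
The proof will be a direct verification from the explicit formulas, handling the three multipliers separately and then combining them.

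\emph{Bounds \eqref{pre1}.} For \eqref{pre1}, $|\arctan|\le\pi/2$ gives $|\mathcal M^{(1)}_k|\le\frac{1}{3\pi}\cdot\frac{\pi}{2}=\frac16$. Since $\delta$ is odd with $|\delta|\le D_\delta$ and $0\le\chi\le1$, we get $|\mathcal M^{(2)}_k|\le\frac16$. For $\mathcal M^{(3)}$ the integrand is nonnegative, so $\mathcal M^{(3)}\le0$. For the lower bound we extend the $\tau$-integral to $[0,\infty)$ and enlarge $\sum_{\ell\ne k}$ to $\sum_{\ell\in\Z}$. In the $\eta$-integral we substitute $\eta+\ell\tau=\zeta$ and use $\langle\ell\rangle^{2N-2}+|\zeta|^{2N-2}\ge$ the appropriate comparison, so that $\int_\R d\zeta/(\langle\ell\rangle^{2N-2}+|\zeta|^{2N-2})=\langle\ell\rangle^{-(2N-3)}\frac{2\pi}{2N-2}\csc(\frac{\pi}{2N-2})$; this is exactly the prefactor in $D_N$. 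The remaining factor depends on $s=(\xi-\zeta+\ell\tau+k(t-\tau))/(k-\ell)$, which is affine in $\tau$ with slope $-1$; integrating over $\tau\in\R$ is then bounded by $2\int_0^\infty ds/((1+s)[\log(1+\langle s\rangle)]^2)$. I need to check whether the factor $2$ is absorbed by the normalization (the constant $\frac{2\pi}{N-1}$ versus $\frac{\pi}{N-1}$ suggests it is). Once it is, we obtain $|\mathcal M^{(3)}|\le\frac16$. Summing the three bounds gives \eqref{chengzijie}, and multiplying by $\mathcal A$ gives \eqref{WA-equivalence}.

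\emph{Coercivity \eqref{pre2}, \eqref{pre3}, \eqref{pre8}.} We compute
\[
k\partial_\xi\mathcal M^{(1)}_k=\frac{1}{3\pi}\frac{\kappa^{1/3}|k|^{2/3}}{1+\kappa^{2/3}|k|^{-2/3}\xi^2}.
\]
We then split into two cases. If $\kappa^{2/3}|k|^{-2/3}\xi^2\le1$, this term is at least $\frac{1}{6\pi}\kappa^{1/3}|k|^{2/3}$. Otherwise $\kappa\xi^2\ge\kappa^{1/3}|k|^{2/3}$. Either way the pointwise bound holds, which gives \eqref{pre2}. For \eqref{pre3} we have $k\partial_\xi\mathcal M^{(2)}_k=\frac{1}{6D_\delta}\chi\,\delta'(\xi/k)\ge0$.

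For \eqref{pre8}, we write $(-\partial_t+k\partial_\xi)\mathcal W=-\frac1{32}\kappa^{1/3}\mathbf 1_{k\ne0}\mathcal W+\mathcal A(k\partial_\xi\mathcal M^{(1)}+k\partial_\xi\mathcal M^{(2)}+\mathcal G)$. Using $\mathcal W\ge\frac16\mathcal A$, we split $2\sigma\xi^2\mathcal W\ge\frac13\sigma\xi^2\mathcal A$ into $\frac16\sigma\xi^2\mathcal A$ plus $\frac16\sigma\xi^2\mathcal A\ge\frac16\kappa\xi^2\mathcal A$. The latter, combined with the $\mathcal M^{(1)}$ term, yields $\frac{1}{6\pi}\cdot(\text{pointwise})$ after rechecking the constants; the coefficient may need halving, in which case the two-case argument is applied with the $\frac16$ weight. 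The loss $\frac1{32}\kappa^{1/3}\mathcal W\le\frac1{32}\kappa^{1/3}|k|^{2/3}\mathcal A$ is absorbed because $\frac1{32}$ is small relative to the enhanced term. This is where the constants are tight, and I expect to have to rebalance the factors (for example, using $\frac1{64}<\frac1{12\pi}$ as Remark \ref{rmk:decay-rate} indicates).

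\emph{Derivative bounds \eqref{pre4}, \eqref{pre5}.} We compute $\partial_k\mathcal M^{(1)}$: the derivative of $|k|^{-1/3}$ gives $\frac{1}{3|k|}\frac{a}{1+a^2}$ with $a=\kappa^{1/3}|k|^{-1/3}\xi$, and this is $\lesssim|k|^{-1}$. Next, $|\partial_\xi\mathcal M^{(1)}|\le\kappa^{1/3}|k|^{-1/3}$ and $|\partial_\xi\mathcal M^{(2)}|\lesssim|k|^{-1}$ since $\delta'\le1$. For $\partial_\xi\mathcal M^{(3)}$ we differentiate the $s$-kernel in $\xi$. Because $\partial_\xi s=\frac{1}{k-\ell}=-\frac{1}{k}\partial_\tau s\cdot\frac{k-\ell}{k-\ell}$, the $\xi$-derivative can be traded for a $\tau$-derivative: since $s$ depends on $(\xi+kt)-\tau(k-\ell)$, we get $\partial_\xi s=-\frac1{k-\ell}\cdot\ldots$ Integrating by parts in $\tau$, or bounding the derivative of the kernel in $s$ (which is integrable in $s$) after the change of variables, gives a bound of order $|k|^{-1}$ times the same normalization sums. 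This is the main obstacle, since one must avoid a bad $|k-\ell|^{-1}$ factor. I would try writing $\partial_\xi=\frac1k(k\partial_\xi-\partial_t)+\frac1k\partial_t$ and use $\mathcal G\lesssim1$ together with a direct bound on $\partial_t\mathcal M^{(3)}$ coming from the upper limit.

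Finally, \eqref{pre5} follows from the mean value theorem in $(k,\xi)$ along the segment from $(\ell,\eta)$ to $(k,\xi)$, which stays in the region $|q|\approx|k|$ with constant sign since $k\ell>0$ and $|k-\ell|\le\frac12|\ell|$, combined with the first two bounds in \eqref{pre4}.
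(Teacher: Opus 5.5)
Most of your plan coincides with the paper's proof. This includes \eqref{pre1} via the unit-Jacobian substitution: the factor $2$ is absorbed exactly, since $\int_{\mathbb R}(\langle\ell\rangle^{2N-2}+|q|^{2N-2})^{-1}dq=\langle\ell\rangle^{-(2N-3)}\frac{\pi}{N-1}\csc\frac{\pi}{2N-2}$ and $\int_{\mathbb R}h=2\int_0^\infty$. It also includes \eqref{pre2}, \eqref{pre3} and \eqref{pre5}.

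\textbf{The constant in \eqref{pre8}.} Your argument does not reach the claimed $\frac1{6\pi}$. Your two-case bound gives only $\frac16\kappa\xi^2+k\partial_\xi\mathcal M_k^{(1)}\ge\frac1{6\pi}\kappa^{1/3}|k|^{2/3}$. After paying the loss $\frac1{32}\kappa^{1/3}|k|^{2/3}$ from $\partial_t\mathcal A_k$, you are left with $(\frac1{6\pi}-\frac1{32})\kappa^{1/3}|k|^{2/3}$. Rebalancing a case split in which one of the two terms is dropped cannot repair this: the best threshold gives about $0.074\,\kappa^{1/3}|k|^{2/3}$, whereas $\frac1{6\pi}+\frac1{32}\approx0.084$ is needed. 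The missing step is to keep both terms together. With $s=\kappa^{2/3}|k|^{-2/3}\xi^2$,
\[
\tfrac16\kappa\xi^2+k\partial_\xi\mathcal M_k^{(1)}(\xi)=\kappa^{1/3}|k|^{2/3}\Big(\frac s6+\frac1{3\pi(1+s)}\Big).
\]
This function of $s$ is convex with derivative $\frac16-\frac1{3\pi}>0$ at $s=0$. It is therefore increasing on $[0,\infty)$ and bounded below by $\frac1{3\pi}$. Since $\frac1{32}\le\frac1{6\pi}$, you get $\frac1{3\pi}-\frac1{32}\ge\frac1{6\pi}$, which is the stated constant and the one that later enters $c_1$.

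\textbf{The bound on $\partial_\xi\mathcal M_k^{(3)}$.} The estimate $|\partial_\xi\mathcal M_k^{(3)}|\lesssim|k|^{-1}$ is left open, and the concrete route you settle on is circular. The integrand depends on $t$ through $k(t-\tau)$, so $\partial_t\mathcal M_k^{(3)}=-\mathcal G_k+k\partial_\xi\mathcal M_k^{(3)}$; the upper limit contributes only $-\mathcal G_k$. Hence $\frac1k(\mathcal G_k+\partial_t\mathcal M_k^{(3)})$ simply returns $\partial_\xi\mathcal M_k^{(3)}$. Also, the factor $|k-\ell|^{-1}$ is not an obstruction; it is the source of the decay. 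The paper differentiates $h(\rho)$, with $\rho=\frac{\xi-\eta+k(t-\tau)}{k-\ell}$, directly. Using the same change of variables and $h'\in L^1(\mathbb R)$, it obtains
\[
|\partial_\xi\mathcal M_k^{(3)}(t,\xi)|\lesssim\sum_{\ell\neq k}\frac{\langle\ell\rangle^{-(2N-3)}}{|k-\ell|}.
\]
It then splits the sum. For $|k-\ell|\ge|k|/2$ the contribution is $\lesssim|k|^{-1}$. For $0<|k-\ell|<|k|/2$ one has $|\ell|\gtrsim|k|$, and the contribution is $\lesssim|k|^{-(2N-3)}\log(2+|k|)\lesssim|k|^{-1}$.

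Your integration-by-parts idea can also be made rigorous, but only in the original $(\eta,\tau)$ variables, where $\partial_\xi h(\rho)=-\frac1k\partial_\tau h(\rho)$. The boundary terms at $\tau=0,t$ are bounded by $\|h\|_{L^\infty}\langle\ell\rangle^{-(2N-3)}$. In the interior term, $\partial_\tau$ falls on $(\langle\ell\rangle^{2N-2}+|\eta+\ell\tau|^{2N-2})^{-1}$ and produces $2|\ell|\langle\ell\rangle^{-(2N-2)}\|h\|_{L^1}$. Both are summable in $\ell$, which gives $|k|^{-1}$ directly. You need to carry out one of these two computations.

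A minor slip: $\delta'(0)=(\log2)^{-2}>1$, so $\delta'\le1$ is false, although boundedness of $\delta'$ is all you use.
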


\begin{proof}
For $\mathcal{M}_k^{(1)}(\xi)$, the definition and
$|\arctan z|\leq\pi/2$ give, for $k\neq0$,
\[
\left|\mathcal{M}_k^{(1)}(\xi)\right|
\leq
\frac{1}{3\pi}\cdot\frac{\pi}{2}
=\frac16.
\]
The same bound is trivial for $k=0$, since
$\mathcal{M}_0^{(1)}(\xi)=0$.

Next, since $\delta(0)=0$ and $\delta'$ is even and nonnegative,
$\delta$ is odd and increasing. Moreover,
\[
\lim_{x\to+\infty}\delta(x)=D_\delta,
\qquad
\lim_{x\to-\infty}\delta(x)=-D_\delta.
\]
Consequently,
\[
|\delta(x)|\leq D_\delta,
\qquad x\in\mathbb R.
\]
It follows that, for $k\neq0$,
\[
\left|\mathcal{M}_k^{(2)}(\xi)\right|
\leq
\frac{1}{6D_\delta}
\chi(\kappa^{1/2}|k|)
\left|\delta\left(\frac{\xi}{k}\right)\right|
\leq\frac16.
\]
Again, the estimate is immediate for $k=0$, since
$\mathcal{M}_0^{(2)}(\xi)=0$.

It remains to estimate $\mathcal{M}_k^{(3)}(t,\xi)$. Define
\[
h(r):=
\frac{1}
{(1+|r|)
 [\log(1+\langle r\rangle)]^2}.
\]
Then $h\in L^1(\mathbb R)$. For fixed $\ell\neq k$, consider the
change of variables
\[
\rho=
\frac{\xi-\eta+k(t-\tau)}{k-\ell},
\qquad
q=\eta+\ell\tau.
\]
Its Jacobian has absolute value one. Enlarging the image of
$[0,t]\times\mathbb R$ to $\mathbb R^2$, we obtain
\begin{align*}
\left|\mathcal{M}_k^{(3)}(t,\xi)\right|
\leq&
\frac{1}{6D_N}
\chi(\kappa^{1/2}|k|)
\sum_{\ell\neq k}
\int_{\mathbb R^2}
\frac{h(\rho)}
{\langle\ell\rangle^{2N-2}+|q|^{2N-2}}
\,d\rho\,dq
\\
\leq&
\frac{1}{6D_N}
\sum_{\ell\in\mathbb Z}
\left(
\int_{\mathbb R}
\frac{dq}
{\langle\ell\rangle^{2N-2}+|q|^{2N-2}}
\right)
\left(
\int_{\mathbb R}h(\rho)\,d\rho
\right).
\end{align*}
For $N\geq4$,
\[
\int_{\mathbb R}
\frac{dq}
{\langle\ell\rangle^{2N-2}+|q|^{2N-2}}
=
\langle\ell\rangle^{-(2N-3)}
\frac{\pi}{N-1}
\csc\left(\frac{\pi}{2N-2}\right),
\]
while
\[
\int_{\mathbb R}h(\rho)\,d\rho
=
2\int_0^\infty
\frac{ds}
{(1+s)
[\log(1+\sqrt{1+s^2})]^2}.
\]
Therefore, by the definition of $D_N$,
\[
\left|\mathcal{M}_k^{(3)}(t,\xi)\right|
\leq
\frac{D_N}{6D_N}
=\frac16.
\]
Thus, for all $t\geq0$, $k\in\mathbb Z$, and $\xi\in\mathbb R$, we prove \eqref{pre1}. Moreover,
since $\mathcal{M}_k^{(1)}(\xi)$ and $\mathcal{M}_k^{(2)}(\xi)$ take values in
$[-1/6,1/6]$, while $\mathcal{M}_k^{(3)}(t,\xi)\leq0$, we have
\begin{align*}
\frac16
\leq&
\mathcal{M}_k^{(1)}(\xi)
+\mathcal{M}_k^{(2)}(\xi)
+\mathcal{M}_k^{(3)}(t,\xi)
+\frac23
\leq1.
\end{align*}
This proves \eqref{chengzijie}.

For $k\neq0$, direct computation yields
\begin{align*}
 k\partial_\xi \mathcal{M}_k^{(1)}(\xi)
 =
 \frac{\kappa^{1/3}|k|^{2/3}}
 {3\pi(1+\kappa^{2/3}|k|^{-2/3}\xi^2)}.
\end{align*}
If $|\xi|\leq\kappa^{-1/3}|k|^{1/3}$, then $k\partial_\xi \mathcal{M}_k^{(1)}(\xi) \geq \frac{1}{6\pi}\kappa^{1/3}|k|^{2/3}$. Conversely, if $|\xi|\geq\kappa^{-1/3}|k|^{1/3}$, then $\kappa\xi^2\geq\kappa^{1/3}|k|^{2/3}$. Summing the pointwise lower bound over $k$ establishes \eqref{pre2}. Estimate \eqref{pre3} follows directly from the definition $k\partial_\xi \mathcal{M}_k^{(2)}(\xi)=\frac{1}{6D_\delta}\delta_k'(\xi)\geq0$.

Using the smooth extension introduced above, direct differentiation
gives
\begin{align*}
\partial_\xi\mathcal M_k^{(1)}(\xi)
&=
\frac{\kappa^{1/3}|k|^{-1/3}\operatorname{sgn}(k)}
{3\pi\left(
1+\kappa^{2/3}|k|^{-2/3}\xi^2
\right)},
\quad
\partial_k\mathcal M_k^{(1)}(\xi)
=
-\frac{
\kappa^{1/3}|k|^{-4/3}\xi
}{
9\pi\left(
1+\kappa^{2/3}|k|^{-2/3}\xi^2
\right)
}.
\end{align*}
Consequently,
\begin{align*}
\left|\partial_\xi\mathcal M_k^{(1)}(\xi)\right|
&\lesssim
\kappa^{1/3}|k|^{-1/3}.
\end{align*}
Moreover, we obtain
\begin{align*}
\left|\partial_k\mathcal M_k^{(1)}(\xi)\right|
&=
\frac{1}{9\pi|k|}
\frac{\kappa^{1/3}|k|^{-1/3}|\xi|}{1+(\kappa^{1/3}|k|^{-1/3}|\xi|)^2}
\lesssim
|k|^{-1}.
\end{align*}
After restricting $k$ to $\mathbb Z\setminus\{0\}$, these are the
corresponding bounds in \eqref{pre4}. The estimate for
$\partial_\xi\mathcal M_k^{(2)}(\xi)$ follows directly from its definition.

For $\mathcal{M}_k^{(3)}(t,\xi)$, differentiating under the integral and using the same change of variables yields
\begin{align*}
 |\partial_\xi \mathcal{M}_k^{(3)}(t,\xi)|
 \lesssim
 \sum_{\ell\neq k}
 \frac{1}{|k-\ell|}
 \int_{\mathbb R}
 \frac{dq}
 {\langle\ell\rangle^{2N-2}+|q|^{2N-2}}
 \lesssim
 \sum_{\ell\neq k}
 \frac{\langle\ell\rangle^{-(2N-3)}}{|k-\ell|}.
\end{align*}
This sum is bounded by $C|k|^{-1}$ on the region $|k-\ell|\geq|k|/2$. On the complementary region, $|\ell|\approx|k|$, and the sum $\sum_{0<|k-\ell|<|k|/2} \langle\ell\rangle^{-(2N-3)}|k-\ell|^{-1} \lesssim |k|^{-(2N-3)}\log(2+|k|) \lesssim |k|^{-1}$, completing the proof of \eqref{pre4}.

Estimate \eqref{pre5} follows from the mean value theorem applied to
the smooth extension of $\mathcal M_k^{(1)}(\xi)$ in the real parameter
$k\neq0$.
Under the assumptions
$
k\ell>0$,
$
|k-\ell|\leq \frac12|\ell|,
$
we have

$$
|k-s(k-\ell)|\approx |k|,
\qquad s\in[0,1].
$$
Applying the mean value theorem and \eqref{pre4}, we obtain
\begin{align*}
\bigl|\mathcal{M}_k^{(1)}(\xi)-\mathcal{M}_\ell^{(1)}(\eta)\bigr|
&\le \int_0^1 \big( |\partial_k\mathcal{M}_k^{(1)}|\,|k-\ell| + |\partial_\xi \mathcal{M}_k^{(1)}|\,|\xi-\eta| \big) ds \\
&\lesssim \int_0^1 \left( \frac{|k-\ell|}{|k-s(k-\ell)|}
+\kappa^{\frac13}\,|k-s(k-\ell)|^{-\frac13}\,|\xi-\eta| \right) ds \\
&\lesssim \frac{|k-\ell|}{|k|}+\kappa^{\frac13}\,|k|^{-\frac13}\,|\xi-\eta|.
\end{align*}

To establish the coercivity bound \eqref{pre8}, observe that $(-\partial_t+k\partial_\xi)\mathcal{A}_k(t,\xi) = -\frac{1}{32}\kappa^{1/3}\mathbf{1}_{k\neq0}\mathcal{A}_k(t,\xi)$.
For $k\neq0$, the product rule, the identities $k\partial_\xi\mathcal{M}_k^{(2)}(\xi) = \frac{1}{6D_\delta}\delta_k'(\xi)$, $(-\partial_t+k\partial_\xi)\mathcal{M}_k^{(3)}(t,\xi) = \mathcal{G}_k(t,\xi)$, and the bounds in \eqref{chengzijie} along with $\kappa\le\sigma$ yield
\begin{align}\label{pre8-intermediate}
&2\sigma\xi^2\mathcal{W}_k(t,\xi)
+\bigl(-\partial_t+k\partial_\xi\bigr)\mathcal{W}_k(t,\xi)
\nonumber\\
&\quad\ge
\mathcal{A}_k(t,\xi)\biggl[
\frac16\sigma\xi^2
+\frac16\kappa\xi^2
+k\partial_\xi\mathcal{M}_k^{(1)}
+\frac{1}{6D_\delta}\delta_k'
+\mathcal{G}_k
-\frac{1}{32}\kappa^{1/3}
\biggr].
\end{align}
Setting $s=\kappa^{2/3}|k|^{-2/3}\xi^2 \ge 0$, the explicit formula for $\mathcal{M}_k^{(1)}(\xi)$ gives
\begin{align*}
\frac16\kappa\xi^2
+k\partial_\xi\mathcal{M}_k^{(1)}(\xi)
=
\kappa^{1/3}|k|^{2/3}
\left(
\frac{s}{6}
+\frac{1}{3\pi(1+s)}
\right).
\end{align*}
The function $s \mapsto \frac{s}{6}+\frac{1}{3\pi(1+s)}$ is strictly increasing for $s \ge 0$. Hence, its minimum occurs at $s=0$, yielding
\begin{align*}
\frac16\kappa\xi^2
+k\partial_\xi\mathcal{M}_k^{(1)}(\xi)
\ge
\frac{1}{3\pi}\kappa^{1/3}|k|^{2/3}.
\end{align*}
Since $|k|\geq1$ for $k\neq0$ and
$
\frac{1}{32}<\frac{1}{6\pi},
$
we have
\begin{align*}
\frac{1}{32}\kappa^{1/3}
&\leq
\frac{1}{32}
\kappa^{1/3}|k|^{2/3}
\leq
\frac{1}{6\pi}
\kappa^{1/3}|k|^{2/3}.
\end{align*}
Consequently,
\begin{align*}
\left(
\frac{1}{3\pi}-\frac{1}{32}
\right)
\kappa^{1/3}|k|^{2/3}
\geq
\frac{1}{6\pi}
\kappa^{1/3}|k|^{2/3}.
\end{align*}
Inserting this estimate into \eqref{pre8-intermediate} gives
\eqref{pre8} for $k\neq0$.

For $k=0$, we have $\mathcal A_0(t,\xi)=1$, $\mathcal{M}_0^{(1)}(\xi)=\mathcal{M}_0^{(2)}(\xi)=\delta_0'(\xi)=0$, and $-\partial_t\mathcal{W}_0(t,\xi) = \mathcal{G}_0(t,\xi)$. Using $\mathcal{W}_0(t,\xi) \ge 1/6$, we obtain
\begin{align*}
2\sigma\xi^2\mathcal{W}_0(t,\xi)
-\partial_t\mathcal{W}_0(t,\xi)
=
2\sigma\xi^2\mathcal{W}_0(t,\xi)+\mathcal{G}_0(t,\xi)
\ge
\frac16\sigma\xi^2+\mathcal{G}_0(t,\xi),
\end{align*}
which completes the proof of Lemma \ref{chengziyinli}.
\end{proof}

We next derive commutator estimates for the composite weight $\mathcal{W}_k(t,\xi)\Lambda_t^{2N}(k,\xi)$. These estimates will be used to control derivative
losses in the nonlinear transport and magnetic stretching terms.

\begin{lemma}[Commutator bounds]\label{commutator}
Assume that
\begin{align*}
k,\ell\in\mathbb Z\setminus\{0\},\qquad
k\ell>0,\qquad
|k-\ell|\le \frac12|\ell|,
\qquad
\min\{|k|,|\ell|\}\ge 2 \kappa^{-1/2}.
\end{align*}
Then, for all $\xi,\eta\in\mathbb R$,
\begin{align}\label{hdga}
\big|
\mathcal{W}_k(t,\xi)-\mathcal{W}_\ell(t,\eta)
\big|
\lesssim
\mathcal{A}_k^{1/2}(t,\xi)\mathcal{A}^{1/2}_\ell(t,\eta)
\left(
{\Lambda_t(k-\ell,\xi-\eta)}/{|k|}
+
\kappa^{1/3}|k|^{-1/3}|\xi-\eta|
\right).
\end{align}
Moreover, for all $k,\ell\in\mathbb Z$ and all $\xi,\eta\in\mathbb R$,
\begin{align}\label{keyjiaohuanzi}
&
\big|
\ell \mathcal{W}_k(t,\xi)\Lambda_t^{2N}(k,\xi)
-
k \mathcal{W}_\ell(t,\eta)\Lambda_t^{2N}(\ell,\eta)
\big|
\notag
\\
&\quad\lesssim
|\ell|\,
\mathcal{A}_k^{1/2}(t,\xi)
\mathcal{A}^{1/2}_\ell(t,\eta)
\mathcal{A}_{k-\ell}^{1/2}(t,\xi-\eta)
\Lambda_t^N(k,\xi)
\Lambda_t^N(k-\ell,\xi-\eta)
\notag
\\
&\qquad
+
|k|\,
\mathcal{A}_k^{1/2}(t,\xi)
\mathcal{A}^{1/2}_\ell(t,\eta)
\mathcal{A}_{k-\ell}^{1/2}(t,\xi-\eta)
\Lambda_t^N(\ell,\eta)
\Lambda_t^N(k-\ell,\xi-\eta)
\notag
\\
&\qquad
+
\mathcal{A}_k^{1/2}(t,\xi)
\mathcal{A}^{1/2}_\ell(t,\eta)
\mathcal{A}_{k-\ell}^{1/2}(t,\xi-\eta)
\Lambda_t^N(k,\xi)
\Lambda_t^N(\ell,\eta)\Lambda_t(k-\ell,\xi-\eta)
\notag
\\
&\qquad
+
\kappa^{-1/6}
\mathcal{A}_k^{1/2}(t,\xi)
\mathcal{A}^{1/2}_\ell(t,\eta)
|k|^{1/3}\Lambda_t^N(k,\xi)
|\ell|^{1/3}\Lambda_t^N(\ell,\eta)
\Lambda_t(k-\ell,\xi-\eta).
\end{align}
\end{lemma}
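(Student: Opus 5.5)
The plan for \eqref{hdga} is to use that, under the hypothesis $\min\{|k|,|\ell|\}\ge 2\kappa^{-1/2}$, the cutoff satisfies $\chi(\kappa^{1/2}|k|)=\chi(\kappa^{1/2}|\ell|)=0$. Hence $\mathcal M^{(2)}$ and $\mathcal M^{(3)}$ vanish at both frequencies. Since $k,\ell\neq0$, we also have $\mathcal A_k(t,\xi)=\mathcal A_\ell(t,\eta)=\mathcal A_k^{1/2}\mathcal A_\ell^{1/2}$. Therefore
\[
\mathcal W_k(t,\xi)-\mathcal W_\ell(t,\eta)=\mathcal A_k^{1/2}\mathcal A_\ell^{1/2}\bigl(\mathcal M^{(1)}_k(\xi)-\mathcal M^{(1)}_\ell(\eta)\bigr).
\]
Applying \eqref{pre5} together with $|k-\ell|\le\Lambda_t(k-\ell,\xi-\eta)$ then gives \eqref{hdga} directly.

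For \eqref{keyjiaohuanzi}, I would first record the exponential bookkeeping. Because $\mathcal A_m=e^{\frac1{32}\kappa^{1/3}t\mathbf 1_{m\neq0}}$ and $k=\ell+(k-\ell)$, whenever $k\ne0$ at least two of $k,\ell,k-\ell$ are nonzero. This gives
\[
\mathcal A_k\le \mathcal A_k^{1/2}\mathcal A_\ell^{1/2}\mathcal A_{k-\ell}^{1/2},
\]
and symmetrically for $\mathcal A_\ell$; the case $k=\ell=0$ is trivial. Since $\mathcal W\le\mathcal A$ by \eqref{WA-equivalence}, only the polynomial weights need to be compared. I would also use that $\Lambda_t^2(k,\xi)=1+k^2+(\xi+kt)^2$ is a quadratic form in the linear variables $(k,\xi+kt)$, which are additive under $(k,\xi)=(\ell,\eta)+(k-\ell,\xi-\eta)$. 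This yields the triangle inequality $\Lambda_t(k,\xi)\lesssim\Lambda_t(\ell,\eta)+\Lambda_t(k-\ell,\xi-\eta)$ and the mean-value bound
\[
|\Lambda_t^{2N}(k,\xi)-\Lambda_t^{2N}(\ell,\eta)|\lesssim \Lambda_t(k-\ell,\xi-\eta)\bigl(\Lambda_t(k,\xi)+\Lambda_t(\ell,\eta)\bigr)^{2N-1}.
\]

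I would then split into cases. In the \emph{non-transport regime}, either $\Lambda_t(k-\ell,\xi-\eta)\gtrsim\Lambda_t(\ell,\eta)$, or $k\ell\le0$, or $|k-\ell|>\frac12|\ell|$. In the first situation the crude bound $|\ell|\mathcal W_k\Lambda_t^{2N}(k,\xi)\le|\ell|\mathcal A\,\Lambda_t^N(k,\xi)\Lambda_t^N(k,\xi)$, with $\Lambda_t(k,\xi)\lesssim\Lambda_t(k-\ell,\xi-\eta)$, gives the first term. The symmetric crude bound on $|k|\mathcal W_\ell\Lambda_t^{2N}(\ell,\eta)$ gives the second term. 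If instead $\Lambda_t(k-\ell,\xi-\eta)\ll\Lambda_t(\ell,\eta)$ but $k,\ell$ are not comparable, then $|k-\ell|\gtrsim\max\{|k|,|\ell|\}$. Using $|k|,|\ell|\lesssim\Lambda_t(k-\ell,\xi-\eta)$ and $\Lambda_t(k,\xi)\approx\Lambda_t(\ell,\eta)$, both crude pieces are bounded by the third term.

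In the \emph{transport regime}, $k\ell>0$, $|k-\ell|\le\frac12|\ell|$ and $\Lambda_t(k-\ell,\xi-\eta)\ll\Lambda_t(\ell,\eta)$, so $\Lambda_t(k,\xi)\approx\Lambda_t(\ell,\eta)$. Here I would use the decomposition
\[
(\ell-k)\mathcal W_k\Lambda_t^{2N}(k,\xi)+k\,\mathcal W_k\bigl(\Lambda_t^{2N}(k,\xi)-\Lambda_t^{2N}(\ell,\eta)\bigr)+k\bigl(\mathcal W_k-\mathcal W_\ell\bigr)\Lambda_t^{2N}(\ell,\eta).
\]
The first two pieces go into the third term, using $|k-\ell|\le\Lambda_t(k-\ell,\xi-\eta)$, the mean-value bound above, and $|k|\le\Lambda_t(k,\xi)$.

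The main obstacle is the last piece, $k(\mathcal W_k-\mathcal W_\ell)\Lambda_t^{2N}(\ell,\eta)$, since all three multipliers may be active when $|k|<2\kappa^{-1/2}$. For $\mathcal M^{(1)}$, \eqref{pre5} times $|k|$ gives $|k-\ell|+\kappa^{1/3}|k|^{2/3}|\xi-\eta|$. The first part goes into the third term. For the second, $\kappa^{1/3}|k|^{2/3}\lesssim\kappa^{-1/6}|k|^{1/3}|\ell|^{1/3}$ since $\kappa\le1$ and $|k|\approx|\ell|$, and $|\xi-\eta|$ is absorbed by writing $\xi-\eta=(\xi-\eta+(k-\ell)t)-(k-\ell)t$. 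I expect this $t$-dependent shift, together with the treatment of $\mathcal A_k$ versus $\mathcal A_\ell$ (which are equal here), to be the delicate point. I would try to absorb the $(k-\ell)t$ part by instead measuring $\partial_\xi$ along characteristics, i.e.\ comparing at $(\ell,\eta)$ versus $(k,\eta+(\ell-k)t)$.

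For $\mathcal M^{(2)}$ and $\mathcal M^{(3)}$, I would apply the mean value theorem in $(k,\xi)$ using \eqref{pre4}. This needs $|\partial_\xi\mathcal M^{(2,3)}_k|\lesssim|k|^{-1}$, the bound $|\partial_k[\chi(\kappa^{1/2}|k|)\delta(\xi/k)]|\lesssim|k|^{-1}$ (from $|x\delta'(x)|\lesssim1$ and $\kappa^{1/2}\lesssim|k|^{-1}$ on the support of $\chi'$), and the analogous $k$-derivative of $\mathcal M^{(3)}$, which is controlled by the same change of variables as in the proof of Lemma~\ref{chengziyinli}. After multiplying by $|k|$, the resulting bound is $\lesssim\Lambda_t(k-\ell,\xi-\eta)$, which again lands in the third term.
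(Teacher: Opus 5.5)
Your proof of \eqref{hdga} is the paper's argument. Your non-transport cases and your three-piece splitting in the transport regime also reproduce the paper's Cases 1--2 and the decomposition used in its Cases 3--4.

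The genuine gap is the piece you flag yourself, $k(\mathcal W_k-\mathcal W_\ell)\Lambda_t^{2N}(\ell,\eta)$ with $|k|,|\ell|$ large, and it cannot be closed. For $k\neq\ell$ one only has $|\xi-\eta|\le\Lambda_t(k-\ell,\xi-\eta)+t|k-\ell|$. Hence $|k|\,|\mathcal M^{(1)}_k(\xi)-\mathcal M^{(1)}_\ell(\eta)|$ is in general of size $\min\{|k|,\kappa^{1/3}|k|^{2/3}t|k-\ell|\}$, even when $\Lambda_t(k-\ell,\xi-\eta)\approx|k-\ell|$.

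Comparing along characteristics does not help. $\mathcal M^{(1)}_k$ is a stationary function of $\xi$, not of $\xi+kt$, so passing from $(\ell,\eta)$ to $(k,\eta-(k-\ell)t)$ costs $t|k-\ell|\,|\partial_\xi\mathcal M^{(1)}|$ all over again. The available way to pay for $t$ is the exponential weight: for $k\neq\ell$, $\kappa^{1/3}\langle t\rangle\lesssim e^{\frac1{64}\kappa^{1/3}t}=\mathcal A_{k-\ell}^{1/2}(t,\xi-\eta)$. This is what the paper does in Case 4. But it yields $\mathcal A_k^{1/2}\mathcal A_\ell^{1/2}\mathcal A_{k-\ell}^{1/2}|k|^{2/3}\Lambda_t^N(k,\xi)\Lambda_t^N(\ell,\eta)\Lambda_t(k-\ell,\xi-\eta)$. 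That term is dominated neither by the third term of \eqref{keyjiaohuanzi} (no $|k|^{2/3}$) nor by the fourth (no $\mathcal A_{k-\ell}^{1/2}$). So the concluding step of the paper's Case 4 has the same gap.

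In fact \eqref{keyjiaohuanzi} fails as written. Take $\mu=\nu=1$, $\ell=n\ge3$, $k=n+1$, $\xi=0$, $\eta=t$. Then:
\begin{itemize}
\item $\mathcal M^{(2)}$ and $\mathcal M^{(3)}$ vanish at both frequencies, and $\mathcal M^{(1)}_k(0)=0$;
\item $\Lambda_t(k-\ell,\xi-\eta)=\sqrt2$;
\item $\Lambda_t^2(\ell,\eta)=\Lambda_t^2(k,\xi)-(2n+1)$ and $\Lambda_t(k,\xi)\ge(n+1)t$.
\end{itemize}
For $t^2\ge3N$ the left side is at least $e^{t/32}\Lambda_t^{2N}(k,\xi)\,\frac{n+1}{3\pi}\arctan(n^{-1/3}t)$. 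The right side is $\lesssim_N e^{t/32}\Lambda_t^{2N}(k,\xi)\,(e^{t/64}+n^{2/3})$. At $t=\frac{128}{3}\log n$ we have $e^{t/64}=n^{2/3}$ and $n^{-1/3}t\to0$, so the ratio is $\gtrsim\log n$.

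What is provable is \eqref{keyjiaohuanzi} with $\mathcal A_{k-\ell}^{1/2}(t,\xi-\eta)$ inserted in the fourth term. This is what the paper's case analysis actually gives. It is also all that the bound by $\mathcal Z_7$ needs, since the extra factor is absorbed by $\sqrt{\mathcal W}$ on the $(k-\ell)$ input. That is the version you should aim for, closing the $\mathcal M^{(1)}$ piece with the exponential weight as above.

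There is a second, fixable gap when $\min\{|k|,|\ell|\}\le2\kappa^{-1/2}$. Your mean-value treatment of $\mathcal M^{(2)}$ and $\mathcal M^{(3)}$ there has the same defect: $|k|\,|\partial_\xi\mathcal M_k|\,|\xi-\eta|\lesssim|\xi-\eta|$ is not bounded by $\Lambda_t(k-\ell,\xi-\eta)$. Also, the bound $|\partial_k\mathcal M^{(3)}_k|\lesssim|k|^{-1}$ is not justified. Differentiating the argument $\frac{\xi-\eta'+k(t-\tau)}{k-\ell'}$ of its integrand in $k$ produces a factor $(t-\tau)/(k-\ell')$, and the change of variables from Lemma~\ref{chengziyinli} then gives only a $t$-dependent bound.

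None of this is needed. Since $|k-\ell|\le\frac12|\ell|$ forces $|k|\approx|\ell|$, in this regime $|k|\approx|\ell|\lesssim\kappa^{-1/2}$, hence $|k|+|\ell|\lesssim\kappa^{-1/6}|k|^{1/3}|\ell|^{1/3}$. Combine this with the crude bound $|\mathcal W_k-\mathcal W_\ell|\lesssim\mathcal A_k^{1/2}\mathcal A_\ell^{1/2}$, with $\Lambda_t(k,\xi)\approx\Lambda_t(\ell,\eta)$, and with $\Lambda_t(k-\ell,\xi-\eta)\ge1$. This puts the entire multiplier-difference piece into the fourth term of \eqref{keyjiaohuanzi}. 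It is the paper's Case 3, and it is the idea missing from your proposal in that regime.
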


\begin{proof}
We first prove \eqref{hdga}. The assumptions imply that
$k,\ell\neq0$ and $|k|\approx|\ell|$. Moreover,
\[
\kappa^{1/2}|k|\geq2,
\qquad
\kappa^{1/2}|\ell|\geq2.
\]
Since $\chi(r)=0$ for $r\geq2$, we have
\[
\chi(\kappa^{1/2}|k|)
=
\chi(\kappa^{1/2}|\ell|)
=0.
\]
Consequently,
\[
\mathcal M_k^{(2)}(\xi)
=
\mathcal M_\ell^{(2)}(\eta)
=
\mathcal M_k^{(3)}(t,\xi)
=
\mathcal M_\ell^{(3)}(t,\eta)
=0.
\]
In addition,
\[
\mathcal A_k(t,\xi)
=
\mathcal A_\ell(t,\eta)
=
e^{\frac{1}{32}\kappa^{1/3}t}.
\]
Therefore,
\begin{align*}
\mathcal W_k(t,\xi)-\mathcal W_\ell(t,\eta)
=
\mathcal A_k(t,\xi)
\left(
\mathcal M_k^{(1)}(\xi)
-
\mathcal M_\ell^{(1)}(\eta)
\right).
\end{align*}
The definition of $\mathcal M^{(1)}_k(\xi)$ and the assumptions
$k\ell>0$, $|k-\ell|\leq|\ell|/2$ imply
\begin{align*}
\left|
\mathcal M_k^{(1)}(\xi)
-
\mathcal M_\ell^{(1)}(\eta)
\right|
\lesssim
\frac{|k-\ell|}{|k|}
+
\kappa^{1/3}|k|^{-1/3}|\xi-\eta|.
\end{align*}
Hence
\begin{align*}
\left|
\mathcal W_k(t,\xi)-\mathcal W_\ell(t,\eta)
\right|
\lesssim&
\mathcal A_k(t,\xi)
\left(
{|k-\ell|}/{|k|}
+
\kappa^{1/3}|k|^{-1/3}|\xi-\eta|
\right)
\\
\lesssim&
\mathcal A^{1/2}_k(t,\xi)
\mathcal A^{1/2}_\ell(t,\eta)
\left(
{\Lambda_t(k-\ell,\xi-\eta)}/{|k|}
+
\kappa^{1/3}|k|^{-1/3}|\xi-\eta|
\right),
\end{align*}
because
\[
|k-\ell|\leq \Lambda_t(k-\ell,\xi-\eta).
\]
This proves \eqref{hdga}.

We turn to \eqref{keyjiaohuanzi}. The case $k=\ell=0$ is trivial. We shall use
\begin{align*}
\Lambda_t(k,\xi)
\lesssim
\Lambda_t(\ell,\eta)+\Lambda_t(k-\ell,\xi-\eta),
\end{align*}
and
\begin{align*}
\mathcal{A}_k(t,\xi)
\le
\mathcal{A}_k^{1/2}(t,\xi)
\mathcal{A}^{1/2}_\ell(t,\eta)
\mathcal{A}_{k-\ell}^{1/2}(t,\xi-\eta),
\end{align*}
as well as the analogous bound with $k$ and $\ell$ interchanged. The
latter inequality follows from $k=\ell+(k-\ell)$ and the definition of
$\mathcal{A}$.

We split the argument into four cases.

\medskip
\noindent
\emph{Case 1.}
Assume
$
\Lambda_t(k-\ell,\xi-\eta)
\ge
\frac12\Lambda_t(\ell,\eta).
$
Then
\begin{align*}
\Lambda_t(k,\xi)\lesssim \Lambda_t(k-\ell,\xi-\eta).
\end{align*}
Hence, by the uniform bound $|\mathcal{W}_m|\lesssim \mathcal A_m(t,\cdot)$,
\begin{align*}
\begin{aligned}
&
\big|
\ell \mathcal{W}_k(t,\xi)\Lambda_t^{2N}(k,\xi)
-
k\mathcal{W}_\ell(t,\eta)\Lambda_t^{2N}(\ell,\eta)
\big|
\\
&\quad
\le
|\ell|\,|\mathcal{W}_k(t,\xi)|\Lambda_t^{2N}(k,\xi)
+
|k|\,|\mathcal{W}_\ell(t,\eta)|\Lambda_t^{2N}(\ell,\eta)
\\
&\quad
\lesssim
|\ell|\,
\mathcal{A}_k^{1/2}(t,\xi)\mathcal{A}^{1/2}_\ell(t,\eta)
\mathcal{A}_{k-\ell}^{1/2}(t,\xi-\eta)
\Lambda_t^N(k,\xi)\Lambda_t^N(k-\ell,\xi-\eta)
\\
&\qquad
+
|k|\,
\mathcal{A}_k^{1/2}(t,\xi)\mathcal{A}^{1/2}_\ell(t,\eta)
\mathcal{A}_{k-\ell}^{1/2}(t,\xi-\eta)
\Lambda_t^N(\ell,\eta)\Lambda_t^N(k-\ell,\xi-\eta).
\end{aligned}
\end{align*}
This gives the first two terms on the right-hand side of \eqref{keyjiaohuanzi}.

\medskip
\noindent
\emph{Case 2.}
Assume
$
\Lambda_t(k-\ell,\xi-\eta)
<
\frac12\Lambda_t(\ell,\eta),
$ and $
|k-\ell|\ge \frac12|\ell|.
$
Then
\begin{align*}
\Lambda_t(k,\xi)\approx \Lambda_t(\ell,\eta),
\qquad
|k|+|\ell|\lesssim |k-\ell| \lesssim  \Lambda_t(k-\ell,\xi-\eta).
\end{align*}
Therefore
\begin{align*}
\begin{aligned}
&
\big|
\ell \mathcal{W}_k(t,\xi)\Lambda_t^{2N}(k,\xi)
-
k\mathcal{W}_\ell(t,\eta)\Lambda_t^{2N}(\ell,\eta)
\big|
\\
&\quad
\lesssim
\mathcal{A}_k^{1/2}(t,\xi)\mathcal{A}^{1/2}_\ell(t,\eta)
\mathcal{A}_{k-\ell}^{1/2}(t,\xi-\eta)
\Lambda_t^N(k,\xi)\Lambda_t^N(\ell,\eta)\Lambda_t(k-\ell,\xi-\eta),
\end{aligned}
\end{align*}
which is the third term in \eqref{keyjiaohuanzi}.

\medskip
\noindent
\emph{Case 3.}
Assume
$
\Lambda_t(k-\ell,\xi-\eta)
<
\frac12\Lambda_t(\ell,\eta),
$$
|k-\ell|<\frac12|\ell|,
$ and $
\min\big\{|k|,|\ell|\big\}\le 2\kappa^{-1/2}.
$
Then $k,\ell\ne0$,
\begin{align*}
|k|\approx|\ell|,
\qquad
\Lambda_t(k,\xi)\approx\Lambda_t(\ell,\eta),
\qquad
|k|\approx|\ell|\lesssim \kappa^{-1/2}.
\end{align*}
In particular,
\begin{align*}
|k|+|\ell|
\lesssim
\kappa^{-1/6}|k|^{1/3}|\ell|^{1/3}.
\end{align*}
We decompose
\begin{align*}
\begin{aligned}
\big|
\ell \mathcal{W}_k(t,\xi)\Lambda_t^{2N}(k,\xi)
-
k\mathcal{W}_\ell(t,\eta)\Lambda_t^{2N}(\ell,\eta)
\big|
\le&
|\ell|\Lambda_t^{2N}(k,\xi)
|\mathcal{W}_k(t,\xi)-\mathcal{W}_\ell(t,\eta)|
\\
&
+
|k-\ell|\,|\mathcal{W}_\ell(t,\eta)|\Lambda_t^{2N}(k,\xi)
\\
&
+
|k|\,|\mathcal{W}_\ell(t,\eta)|
\big|
\Lambda_t^{2N}(k,\xi)-\Lambda_t^{2N}(\ell,\eta)
\big|.
\end{aligned}
\end{align*}
Since $|\mathcal{W}_k-\mathcal{W}_\ell|\lesssim
\mathcal{A}_k^{1/2}(t,\xi)\mathcal{A}^{1/2}_\ell(t,\eta)$ and
$\Lambda_t(k-\ell,\xi-\eta)\ge1$, the first term is bounded by the last
term in \eqref{keyjiaohuanzi}. The second term is bounded by the third term in
\eqref{keyjiaohuanzi}. For the last term, the standard difference estimate gives
\begin{align*}
\big|
\Lambda_t^{2N}(k,\xi)-\Lambda_t^{2N}(\ell,\eta)
\big|
\lesssim
\Lambda_t^{2N-1}(k,\xi)\Lambda_t(k-\ell,\xi-\eta)
\end{align*}
in the present region. Since $|k|\le\Lambda_t(k,\xi)$, this term is again
controlled by the third term in \eqref{keyjiaohuanzi}.

\medskip
\noindent
\emph{Case 4.}
Assume
$
\Lambda_t(k-\ell,\xi-\eta)
<
\frac12\Lambda_t(\ell,\eta),
$ $
|k-\ell|<\frac12|\ell|,
$ and
$\min\{|k|,|\ell|\}>2\kappa^{-1/2}.
$
Then
\begin{align*}
|k|\approx|\ell|,
\qquad
\Lambda_t(k,\xi)\approx\Lambda_t(\ell,\eta),
\qquad
\kappa^{-1/6}|k|^{1/3}|\ell|^{1/3}\gtrsim1.
\end{align*}
We use the same decomposition as in Case 3. The terms containing
$|k-\ell|$ and the difference of the Sobolev weights are estimated as
above and are bounded by the third and fourth terms in \eqref{keyjiaohuanzi},
respectively.

It remains to treat the term
$
|\ell|\Lambda_t^{2N}(k,\xi)
|\mathcal{W}_k(t,\xi)-\mathcal{W}_\ell(t,\eta)|.
$
Indeed, by \eqref{hdga}, we have
\begin{align*}
\begin{aligned}
|\ell|\Lambda_t^{2N}(k,\xi)
|\mathcal{W}_k(t,\xi)-\mathcal{W}_\ell(t,\eta)|
\lesssim&
\mathcal{A}_k^{1/2}(t,\xi)\mathcal{A}^{1/2}_\ell(t,\eta)
\Lambda_t^{2N}(k,\xi)\Lambda_t(k-\ell,\xi-\eta)
\\
&
+
\mathcal{A}_k^{1/2}(t,\xi)\mathcal{A}^{1/2}_\ell(t,\eta)
\kappa^{1/3}|k|^{2/3}
\Lambda_t^{2N}(k,\xi)|\xi-\eta|.
\end{aligned}
\end{align*}
The first term is bounded by the last term in \eqref{keyjiaohuanzi}. For the
second term, if $k=\ell$, then
\begin{align*}
|\xi-\eta|\le \Lambda_t(k-\ell,\xi-\eta),
\end{align*}
and the desired bound follows. If $k\ne\ell$, then
\begin{align*}
|\xi-\eta|
\le
|\xi-\eta+(k-\ell)t|+t|k-\ell|
\lesssim
\langle t\rangle\Lambda_t(k-\ell,\xi-\eta).
\end{align*}
Using $\langle t\rangle e^{-\frac{1}{64}\kappa^{1/3}t}
\lesssim
\kappa^{-1/3}$
and the factor
$
\mathcal{A}_{k-\ell}^{1/2}(t,\xi-\eta)=e^{\frac{1}{64}\kappa^{1/3}t},
$
we get
\begin{align*}
&
\mathcal{A}_k^{1/2}(t,\xi)\mathcal{A}^{1/2}_\ell(t,\eta)
\kappa^{1/3}|k|^{2/3}
\Lambda_t^{2N}(k,\xi)|\xi-\eta|
\\
&\quad\lesssim
\mathcal{A}_k^{1/2}(t,\xi)\mathcal{A}^{1/2}_\ell(t,\eta)
\mathcal{A}_{k-\ell}^{1/2}(t,\xi-\eta)
|k|^{2/3}
\Lambda_t^N(k,\xi)\Lambda_t^N(\ell,\eta)
\Lambda_t(k-\ell,\xi-\eta).
\end{align*}
Since $|k|\approx|\ell|$ and $\kappa^{-1/6}\ge1$, this is dominated by
the last term in \eqref{keyjiaohuanzi}. Combining the four cases proves \eqref{keyjiaohuanzi} and completes the proof of Lemma \ref{commutator}.
\end{proof}

We will use the following convolution estimate for the nonlinear product terms in the sheared Sobolev norm $\Lambda_t^N$.
\begin{lemma}\label{leyoung}
For $N\geq4$, one has
\begin{align}\label{sanxianyang}
&\left|
\sum_{k,\ell\in\Z}\int_{\R^2}
\Lambda_t(k-\ell,\xi-\eta)
\Lambda_t^N(k,\xi)
\Lambda_t^N(\ell,\eta)
f_{k-\ell}(\xi-\eta)
g_\ell(\eta)
\overline{h_k(\xi)}
\,d\eta\,d\xi
\right|
\nn\\
&\quad\lesssim
\|\Lambda_t^N f\|_{L^2}
\|\Lambda_t^N g\|_{L^2}
\|\Lambda_t^N h\|_{L^2}.
\end{align}
\end{lemma}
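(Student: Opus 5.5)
The plan is to reduce \eqref{sanxianyang} to a standard product estimate in the sheared variables. The key observation is that $\Lambda_t(k,\xi)=\langle k,\xi+kt\rangle$ is the ordinary Japanese bracket evaluated at $(k,\xi+kt)$. The map $(k,\xi)\mapsto(k,\xi+kt)$ is additive: $(k-\ell,\xi-\eta)\mapsto(k-\ell,\xi-\eta+(k-\ell)t)$, and this equals the difference of the images of $(k,\xi)$ and $(\ell,\eta)$. We therefore substitute $\xi'=\xi+kt$ and $\eta'=\eta+\ell t$, which has unit Jacobian for each fixed $k,\ell$. Setting $F_m(\zeta)=|f_m(\zeta-mt)|$, and defining $G,H$ analogously, the left side is bounded by
\begin{align*}
\sum_{k,\ell}\int_{\R^2}\langle k-\ell,\xi'-\eta'\rangle\langle k,\xi'\rangle^N\langle\ell,\eta'\rangle^N F_{k-\ell}(\xi'-\eta')G_\ell(\eta')H_k(\xi')\,d\eta'd\xi'.
\end{align*}
Moreover $\|\langle\cdot\rangle^N F\|_{L^2}=\|\Lambda_t^N f\|_{L^2}$, and likewise for $G$ and $H$. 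So it suffices to prove the time-independent estimate with $\langle\cdot\rangle$ in place of $\Lambda_t$.

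Next, write $a=(k-\ell,\xi'-\eta')$ and $b=(\ell,\eta')$, so that $a+b=(k,\xi')$. We split into the regions $\langle b\rangle\ge\langle a\rangle$ and $\langle a\rangle>\langle b\rangle$.

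On the region $\langle b\rangle\ge\langle a\rangle$ we use $\langle a+b\rangle\lesssim\langle b\rangle$. The integrand is then bounded by
\begin{align*}
\langle a\rangle\,|F(a)|\cdot\langle b\rangle^N|G(b)|\cdot\langle a+b\rangle^N|H(a+b)|.
\end{align*}
Here $\langle b\rangle^N G$ and $\langle a+b\rangle^N H$ are in $L^2$. We apply Cauchy--Schwarz/Young in the convolution form $\|(\phi*\psi)\chi\|_{L^1}\le\|\phi\|_{L^1}\|\psi\|_{L^2}\|\chi\|_{L^2}$ with $\phi=\langle\cdot\rangle F$. Then $\|\langle\cdot\rangle F\|_{L^1(\Z\times\R)}\le\|\langle\cdot\rangle^{1-N}\|_{L^2}\|\langle\cdot\rangle^N F\|_{L^2}$, and this is finite because $2(N-1)>2$ is the integrability requirement in two dimensions ($\Z\times\R$). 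This already holds for $N\ge2$, so $N\ge4$ gives room to spare.

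On the region $\langle a\rangle>\langle b\rangle$ we use $\langle a+b\rangle\lesssim\langle a\rangle$. This time we put all the weight except the $b$-factor onto $a$:
\begin{align*}
\langle a\rangle\langle a+b\rangle^N\langle b\rangle^N\lesssim\langle a\rangle^{N}\langle a+b\rangle^{N}\langle b\rangle\cdot\frac{\langle a\rangle\langle b\rangle^{N-1}}{\langle a\rangle^{N}}\le\langle a\rangle^N\langle a+b\rangle^N\langle b\rangle .
\end{align*}
The last inequality follows from $\langle b\rangle^{N-1}\le\langle a\rangle^{N-1}$. Now $\langle\cdot\rangle G\in L^1$ by the same Cauchy--Schwarz argument, while $\langle\cdot\rangle^N F$ and $\langle\cdot\rangle^N H$ are in $L^2$, and Young's inequality closes as before.

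The only genuinely delicate point is checking that the shear change of variables is compatible with the convolution structure, that is, that the relation $\xi-\eta\leftrightarrow\xi'-\eta'-(k-\ell)t$ is exact so the sheared frequencies still add. Everything else is the routine algebra property of $H^N$ on $\T\times\R$.
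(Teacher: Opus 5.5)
Your proof is correct and is essentially the paper's argument: Young's convolution inequality, with the $L^1$ factor $\langle a\rangle F=\langle a\rangle^{1-N}\bigl(\langle a\rangle^N F\bigr)$ controlled by Cauchy--Schwarz against $\langle\cdot\rangle^{-(2N-2)}\in\ell^1L^1$; you get $t$-uniformity from the global shear change of variables, whereas the paper computes $\sum_k\int\Lambda_t^{-s}\,d\xi$ directly via $r=(\xi+kt)/\sqrt{1+k^2}$. Two small remarks: the case split is superfluous, because the weight equals $\langle a\rangle\langle b\rangle^N\langle a+b\rangle^N$ exactly and so your first-region bound holds on the whole domain; and the integrability condition is $N>2$, not $N\geq 2$, since $\sum_k\int(1+k^2+\xi^2)^{-1}\,d\xi$ diverges.
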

\begin{proof}
The estimate follows from the convolution version of Young's
inequality. We only need to observe that
$\Lambda_t^{-s}\in \ell_k^1L_\xi^1$ whenever $s>2$.
Indeed, recalling that
$
\Lambda_t(k,\xi)
=
\bigl(1+k^2+(\xi+kt)^2\bigr)^{1/2},
$
and making the change of variables
$
r=\frac{\xi+kt}{\sqrt{1+k^2}},
$
we obtain
\begin{align*}
\sum_{k\in\mathbb Z}\int_{\mathbb R}
\Lambda_t^{-s}(k,\xi)\,d\xi
&=
\sum_{k\in\mathbb Z}\int_{\mathbb R}
\bigl(1+k^2+(\xi+kt)^2\bigr)^{-s/2}\,d\xi
\\
&=
\left(
\int_{\mathbb R}(1+r^2)^{-s/2}\,dr
\right)
\sum_{k\in\mathbb Z}
(1+k^2)^{(1-s)/2}.
\end{align*}
Moreover,
\begin{align*}
\int_{\mathbb R}(1+r^2)^{-s/2}\,dr
=
\sqrt{\pi}\,
\frac{
\Gamma\bigl(\frac{s-1}{2}\bigr)
}{
\Gamma\bigl(\frac{s}{2}\bigr)
}
<\infty
\qquad\text{for }s>1,
\end{align*}
whereas
\begin{align*}
\sum_{k\in\mathbb Z}
(1+k^2)^{(1-s)/2}
<\infty
\qquad\text{for }s>2.
\end{align*}
Consequently,
\begin{align*}
\sum_{k\in\mathbb Z}\int_{\mathbb R}
\Lambda_t^{-s}(k,\xi)\,d\xi
\lesssim 1,
\qquad s>2,
\end{align*}
uniformly in $t\geq0$. The desired trilinear estimate then follows
from Young's convolution inequality. This proves
Lemma~\ref{leyoung}.
\end{proof}

\section{The proof of Theorem~\ref{thm1.1}}\label{proofth1}

In this section, we prove Theorem~\ref{thm1.1}. Throughout this
section, all the estimates are understood to hold for the nonzero
horizontal modes.

\subsection{Sheared coordinates}
We first eliminate the transport operator
$\partial_t+y\partial_x$ by introducing the sheared coordinates
\begin{align*}
	X=x-ty,
	\qquad
	Y=y.
\end{align*}
We define the transformed unknowns by
\begin{align*}
	\mathbf U(t,X,Y)
	&=\u(t,x,y),
	&
	\mathbf B(t,X,Y)
	&=\mathbf b(t,x,y),&
	\Omega(t,X,Y)
	&=\omega(t,x,y),\\
	J(t,X,Y)
	&=j(t,x,y),
	&
	\Psi(t,X,Y)
	&=\psi(t,x,y),
	&
	\Phi(t,X,Y)
	&=\phi(t,x,y).
\end{align*}
In these coordinates, system~\eqref{rewrite1} becomes
\begin{equation}\label{rewrite2}
\left\{
\begin{aligned}
	&\partial_t\Omega-\beta\partial_XJ=0,
	\\
	&\partial_tJ-\beta\partial_X\Omega
		+2\partial_X\partial_Y^L\Phi=0,
	\\
	&\mathbf U
		=\nabla_L^\perp\Psi
		=(-\partial_Y^L\Psi,\partial_X\Psi),
	\qquad
	\Delta_L\Psi=\Omega,
	\\
	&\mathbf B
		=\nabla_L^\perp\Phi
		=(-\partial_Y^L\Phi,\partial_X\Phi),
	\qquad
	\Delta_L\Phi=J,
\end{aligned}
\right.
\end{equation}
where
\begin{align*}
	\nabla_L
	=
	\begin{pmatrix}
		\partial_X\\
		\partial_Y^L
	\end{pmatrix}
	=
	\begin{pmatrix}
		\partial_X\\
		\partial_Y-t\partial_X
	\end{pmatrix},
	\qquad
	\Delta_L
	=
	\partial_X^2+(\partial_Y-t\partial_X)^2.
\end{align*}

Passing to Fourier variables
$
	(k,\xi)\in\mathbb Z\times\mathbb R
$
associated with $(X,Y)$, we denote the symbol of $-\Delta_L$ by
\begin{align*}
	p(t,k,\xi)
	:=
	k^2+(\xi-kt)^2.
\end{align*}
Its time derivative is
\begin{align*}
	p_t(t,k,\xi)
	=
	-2k(\xi-kt).
\end{align*}

\subsection{Symmetric variables and a modified energy}

Following~\cite{D24}, we introduce the symmetric variables
\begin{align*}
	Z:=(-\Delta_L)^{-\frac12}\Omega_{\neq},
	\qquad
	Q:=(-\Delta_L)^{-\frac12}J_{\neq}.
\end{align*}
For every $k\neq0$, their Fourier transforms satisfy
\begin{equation}\label{rewrite3}
\left\{
\begin{aligned}
	&\partial_t\widehat Z_k
		+\frac12\frac{p_t}{p}\widehat Z_k
		-i\beta k\widehat Q_k=0,
	\\
	&\partial_t\widehat Q_k
		-\frac12\frac{p_t}{p}\widehat Q_k
		-i\beta k\widehat Z_k=0.
\end{aligned}
\right.
\end{equation}

For each $k\neq0$, define the modified energy
\begin{equation*}
	\widetilde E_k(t,\xi)
	:=
	\frac12
	\left[
	|\widehat Z_k|^2+|\widehat Q_k|^2
	+
	\frac2\beta
	\operatorname{Re}
	\left(
	-\frac{i(\xi-kt)}{p}
	\widehat Z_k\overline{\widehat Q_k}
	\right)
	\right].
\end{equation*}
Since $k\neq0$, we have
\begin{align*}
	\frac{|\xi-kt|}{p}
	=
	\frac{|\xi-kt|}
	{k^2+(\xi-kt)^2}
	\leq
	\frac1{2|k|}
	\leq
	\frac12.
\end{align*}
It follows that
\begin{align*}
	\left|
	\frac1\beta
	\operatorname{Re}
	\left(
	-\frac{i(\xi-kt)}p
	\widehat Z_k\overline{\widehat Q_k}
	\right)
	\right|
	&\leq
	\frac1{2|\beta|}
	|\widehat Z_k|\,|\widehat Q_k|
	\leq
	\frac1{4|\beta|}
	\left(
	|\widehat Z_k|^2+|\widehat Q_k|^2
	\right).
\end{align*}
Therefore, if $|\beta|>\frac12$, then
\begin{equation}\label{xianxingdengjia}
\begin{aligned}
	\left(
	\frac12-\frac1{4|\beta|}
	\right)
	\left(
	|\widehat Z_k|^2+|\widehat Q_k|^2
	\right)
	&\leq
	\widetilde E_k(t,\xi)
	\leq
	\left(
	\frac12+\frac1{4|\beta|}
	\right)
	\left(
	|\widehat Z_k|^2+|\widehat Q_k|^2
	\right).
\end{aligned}
\end{equation}
In particular,
\begin{align*}
	\widetilde E_k(t,\xi)
	\approx_\beta
	|\widehat Z_k(t,\xi)|^2
	+
	|\widehat Q_k(t,\xi)|^2.
\end{align*}

We next compute the time derivative of $\widetilde E_k$. System
\eqref{rewrite3} can be rewritten as
\begin{align*}
	\partial_t\widehat Z_k
	=
	-\frac12\frac{p_t}{p}\widehat Z_k
	+i\beta k\widehat Q_k,
	\qquad
	\partial_t\widehat Q_k
	=
	\frac12\frac{p_t}{p}\widehat Q_k
	+i\beta k\widehat Z_k.
\end{align*}
Taking the real part after multiplying by the corresponding complex
conjugates gives
\begin{equation}\label{jiben}
	\frac{d}{dt}
	\frac12
	\left(
	|\widehat Z_k|^2+|\widehat Q_k|^2
	\right)
	=
	\frac12\frac{p_t}{p}
	\left(
	|\widehat Q_k|^2-|\widehat Z_k|^2
	\right).
\end{equation}
The terms generated by the Alfv\'enic coupling cancel in the sum.

Moreover,
\begin{align*}
	\frac{d}{dt}
	\left(
	\widehat Z_k\overline{\widehat Q_k}
	\right)
	&=
	\left(
	-\frac12\frac{p_t}{p}\widehat Z_k
	+i\beta k\widehat Q_k
	\right)
	\overline{\widehat Q_k}
	+
	\widehat Z_k
	\left(
	\frac12\frac{p_t}{p}\overline{\widehat Q_k}
	-i\beta k\overline{\widehat Z_k}
	\right)
	\nn\\
	&=
	i\beta k
	\left(
	|\widehat Q_k|^2-|\widehat Z_k|^2
	\right).
\end{align*}
Consequently,
\begin{align}\label{jiben2}
	&\frac{d}{dt}
	\left[
	\frac1\beta
	\operatorname{Re}
	\left(
	-\frac{i(\xi-kt)}p
	\widehat Z_k\overline{\widehat Q_k}
	\right)
	\right]
	\nn\\
	&\quad=
	\frac1\beta
	\partial_t
	\left(
	\frac{\xi-kt}{p}
	\right)
	\operatorname{Im}
	\left(
	\widehat Z_k\overline{\widehat Q_k}
	\right)
	+
	k\frac{\xi-kt}{p}
	\left(
	|\widehat Q_k|^2-|\widehat Z_k|^2
	\right).
\end{align}
Since
$
	p_t=-2k(\xi-kt),
$
we have
$
	k\frac{\xi-kt}{p}
	=
	-\frac12\frac{p_t}{p}.
$
Thus, the last term in
\eqref{jiben2} cancels exactly with the
right-hand side of \eqref{jiben}. We therefore
obtain
\begin{align*}
	\frac{d}{dt}\widetilde E_k(t,\xi)
	&=
	\frac1\beta
	\partial_t
	\left(
	\frac{\xi-kt}{p}
	\right)
	\operatorname{Im}
	\left(
	\widehat Z_k\overline{\widehat Q_k}
	\right)
	=
	\frac{k\bigl((\xi-kt)^2-k^2\bigr)}
	{\beta p^2}
	\operatorname{Im}
	\left(
	\widehat Z_k\overline{\widehat Q_k}
	\right).
\end{align*}

Furthermore,
\begin{align*}
	\left|
	\partial_t
	\left(
	\frac{\xi-kt}{p}
	\right)
	\right|
	=
	\frac{|k|
	\left|(\xi-kt)^2-k^2\right|}
	{p^2}
	\leq
	\frac{|k|}{p}
	\leq
	\frac{k^2}{p},
\end{align*}
where the last inequality follows from
$k\in\mathbb Z\setminus\{0\}$. Combining this bound with
\begin{align*}
	\left|
	\operatorname{Im}
	\left(
	\widehat Z_k\overline{\widehat Q_k}
	\right)
	\right|
	\leq
	\frac12
	\left(
	|\widehat Z_k|^2+|\widehat Q_k|^2
	\right)
	\le C_\beta
	\widetilde E_k(t,\xi),
\end{align*}
we deduce that
\begin{equation}\label{jiben3}
	\left|
	\frac{d}{dt}\widetilde E_k(t,\xi)
	\right|
	\leq
	C_\beta
	\frac{k^2}{p(t,k,\xi)}
	\widetilde E_k(t,\xi).
\end{equation}

For every $k\neq0$ and $\xi\in\mathbb R$,
\begin{align*}
	\int_0^\infty
	\frac{k^2}
	{k^2+(\xi-kt)^2}
	\,dt
	&\leq
	\int_{-\infty}^{\infty}
	\frac{k^2}{k^2+s^2}
	\frac{\mathrm ds}{|k|}
	=
	\int_{-\infty}^{\infty}
	\frac{\mathrm dr}{1+r^2}
	=
	\pi.
\end{align*}
Applying Gr\"onwall's inequality to
\eqref{jiben3}, we obtain
\begin{align*}
	e^{-\pi C_\beta}\widetilde E_k(0,\xi)
	\leq
	\widetilde E_k(t,\xi)
	\leq
	e^{\pi C_\beta}\widetilde E_k(0,\xi).
\end{align*}
Together with \eqref{xianxingdengjia}, this yields the
pointwise-in-frequency estimate
\begin{equation*}
	|\widehat Z_k(t,\xi)|^2
	+
	|\widehat Q_k(t,\xi)|^2
	\approx_\beta
	|\widehat Z_k(0,\xi)|^2
	+
	|\widehat Q_k(0,\xi)|^2.
\end{equation*}
Recalling the definitions of $Z$ and $Q$, we conclude that
\begin{equation}\label{approx1}
\begin{aligned}
	&
	p(t,k,\xi)^{-1}
	\left(
	|\widehat\Omega_k(t,\xi)|^2
	+
	|\widehat J_k(t,\xi)|^2
	\right)
	\approx_\beta
	(k^2+\xi^2)^{-1}
	\left(
	|\widehat\Omega_k(0,\xi)|^2
	+
	|\widehat J_k(0,\xi)|^2
	\right).
\end{aligned}
\end{equation}

\subsection{Growth of the vorticity and current}

We first derive the upper bound. Since
\begin{align*}
	p(t,k,\xi)
	=
	k^2+(\xi-kt)^2
	\lesssim
	\langle t\rangle^2
	(1+k^2+\xi^2),
\end{align*}
and since $k\neq0$ implies
\begin{align*}
	k^2+\xi^2
	\leq
	1+k^2+\xi^2
	\lesssim
	k^2+\xi^2,
\end{align*}
estimate \eqref{approx1} gives
\begin{align*}
	&
	\int_{\mathbb R}
	\left(
	|\widehat\Omega_k(t,\xi)|^2
	+
	|\widehat J_k(t,\xi)|^2
	\right)
	\,d\xi
	\le C_\beta
	\langle t\rangle^2
	\int_{\mathbb R}
	\left(
	|\widehat\Omega_k(0,\xi)|^2
	+
	|\widehat J_k(0,\xi)|^2
	\right)
	\,d\xi.
\end{align*}
Summing over $k\neq0$, we obtain
\begin{equation}\label{jiben5}
	\|(\Omega_{\neq},J_{\neq})(t)\|_{L^2}
	\le C_\beta
	\langle t\rangle
	\|(\Omega_{\neq},J_{\neq})(0)\|_{L^2}.
\end{equation}

For the lower bound, we use the elementary inequality
\begin{align}\label{bdsh}
\left\langle\frac{\xi}{k}-t\right\rangle
	\left\langle\frac{\xi}{k}\right\rangle
	\gtrsim
	\langle t\rangle,
	\qquad k\neq0.
\end{align}
It follows that
\begin{align*}
	\frac{p(t,k,\xi)}{k^2+\xi^2}
	&=
	\frac{
	1+\left|\frac{\xi}{k}-t\right|^2
	}{
	1+\left|\frac{\xi}{k}\right|^2
	}
	\gtrsim
	\frac{\langle t\rangle^2}
	{\left\langle\frac{\xi}{k}\right\rangle^4}
	=
	\langle t\rangle^2
	\frac{k^4}{(k^2+\xi^2)^2}.
\end{align*}
Therefore, by \eqref{approx1},
\begin{align*}
	&
	\int_{\mathbb R}
	\left(
	|\widehat\Omega_k(t,\xi)|^2
	+
	|\widehat J_k(t,\xi)|^2
	\right)
	\,d\xi
	\\
	&\quad\ge C_\beta^{-1}
	\langle t\rangle^2
	\int_{\mathbb R}
	\frac{k^4}{(k^2+\xi^2)^2}
	\left(
	|\widehat\Omega_k(0,\xi)|^2
	+
	|\widehat J_k(0,\xi)|^2
	\right)
	\,d\xi.
\end{align*}
Summing over $k\neq0$, we arrive at
\begin{equation}\label{jiben6}
\begin{aligned}
	\|(\Omega_{\neq},J_{\neq})(t)\|_{L^2}
	&\gtrsim_\beta
	\langle t\rangle
	\left\|
	\partial_X^2(-\Delta)^{-1}
	(\Omega_{\neq},J_{\neq})(0)
	\right\|_{L^2}.
\end{aligned}
\end{equation}
Since the transformation $(x,y)\mapsto(X,Y)$ preserves the
$L^2$-norm, estimates \eqref{jiben5} and
\eqref{jiben6} give the asserted growth estimates in
the original variables.

\subsection{Bounds for the velocity and magnetic fields}

By the Biot--Savart law,
\begin{align*}
	\mathbf U
	=
	\left(
	-\partial_Y^L\Delta_L^{-1}\Omega,\,
	\partial_X\Delta_L^{-1}\Omega
	\right),
	\qquad
	\mathbf B
	=
	\left(
	-\partial_Y^L\Delta_L^{-1}J,\,
	\partial_X\Delta_L^{-1}J
	\right).
\end{align*}
Hence,
\begin{align*}
	|\widehat U_k^1(t,\xi)|^2
	=
	\frac{|\xi-kt|^2}{p^2}
	|\widehat\Omega_k(t,\xi)|^2,
	\qquad
	|\widehat U_k^2(t,\xi)|^2
	=
	\frac{k^2}{p^2}
	|\widehat\Omega_k(t,\xi)|^2.
\end{align*}

Using \eqref{approx1} and
\begin{align*}
	\frac{|\xi-kt|^2}{p}\leq1,
\end{align*}
we obtain
\begin{align*}
	\|\widehat U_k^1(t)\|_{L_\xi^2}^2
	&=
	\int_{\mathbb R}
	\frac{|\xi-kt|^2}{p}
	\left|
	p^{-\frac12}
	\widehat\Omega_k(t,\xi)
	\right|^2
	\,d\xi
	\\
	&\le C_\beta
	\int_{\mathbb R}
	\frac{|\widehat\Omega_k(0,\xi)|^2+|\widehat J_k(0,\xi)|^2}
	{k^2+\xi^2}
	\,d\xi
	\\
	&\leq
	\frac{C_\beta}{k^2}
	\|(\widehat\Omega_k(0),\widehat J_k(0))\|_{L_\xi^2}^2.
\end{align*}
Since $|k|\geq1$, summing over $k\neq0$ gives
\begin{equation}\label{uyijie}
	\|U_{\neq}^1(t)\|_{L^2}
	\le C_\beta
	\|(\Omega_{\neq}(0),J_{\neq}(0))\|_{L^2}.
\end{equation}

For the second component, estimate \eqref{approx1} yields
\begin{align*}
	\|\widehat U_k^2(t)\|_{L_\xi^2}^2
	&=
	\int_{\mathbb R}
	\frac{k^2}{p}
	\left|
	p^{-\frac12}
	\widehat\Omega_k(t,\xi)
	\right|^2
	\,d\xi
	\\
	&\le C_\beta
	\int_{\mathbb R}
	\frac{k^2}
	{p(t,k,\xi)(k^2+\xi^2)}
	(|\widehat\Omega_k(0,\xi)|^2+|\widehat J_k(0,\xi)|^2)
	\,d\xi.
\end{align*}
The inequality \eqref{bdsh} implies
\begin{align*}
	\frac{k^2}
	{p(t,k,\xi)(k^2+\xi^2)}
	=
	\frac1{
	k^2
	\left\langle\frac{\xi}{k}-t\right\rangle^2
	\left\langle\frac{\xi}{k}\right\rangle^2
	}
	\lesssim
	\frac{\langle t\rangle^{-2}}{k^2}
	\leq
	\langle t\rangle^{-2}
	\langle k,\xi\rangle^2.
\end{align*}
Consequently,
\begin{align*}
	\|\widehat U_k^2(t)\|_{L_\xi^2}^2
	\le C_\beta
	\langle t\rangle^{-2}
	\int_{\mathbb R}
	\langle k,\xi\rangle^2
	(|\widehat\Omega_k(0,\xi)|^2+|\widehat J_k(0,\xi)|^2)
	\,d\xi.
\end{align*}
After summing over $k\neq0$, we obtain
\begin{equation}\label{uerjie}
	\|U_{\neq}^2(t)\|_{L^2}
	\le C_\beta
	\langle t\rangle^{-1}
	\|(\Omega_{\neq}(0),J_{\neq}(0))\|_{H^1}.
\end{equation}

The same argument, with $J$ in place of $\Omega$, gives
\begin{equation}\label{Bbounds}
	\|B_{\neq}^1(t)\|_{L^2}
	\le C_\beta
	\|(\Omega_{\neq}(0),J_{\neq}(0))\|_{L^2},
	\qquad
	\|B_{\neq}^2(t)\|_{L^2}
	\le C_\beta
	\langle t\rangle^{-1}
	\|(\Omega_{\neq}(0),J_{\neq}(0))\|_{H^1}.
\end{equation}
Combining \eqref{uyijie}, \eqref{uerjie}, and
\eqref{Bbounds}, and returning to the original variables, we conclude
that
\begin{align*}
	\|(u_{\neq}^1,b_{\neq}^1)(t)\|_{L^2}
	\le C_\beta
	\|(\omega_{\mathrm{in},\neq},
	j_{\mathrm{in},\neq})\|_{L^2},
\end{align*}
and
\begin{align*}
	\|(u_{\neq}^2,b_{\neq}^2)(t)\|_{L^2}
	\le C_\beta
	\langle t\rangle^{-1}
	\|(\omega_{\mathrm{in},\neq},
	j_{\mathrm{in},\neq})\|_{H^1}.
\end{align*}
This completes the proof of Theorem~\ref{thm1.1}. \hfill $\square$

\section{The proof of Theorem~\ref{thm1.2}}\label{proofth2}

In this section, we establish the weighted energy estimates that form the core of the nonlinear stability proof.

\subsection{A priori energy estimates}\label{sec5}

To rigorously track the pressure gradient in Fourier space, we introduce the Riesz transforms:
\begin{align*}
  \mathcal R_1 := \partial_x(-\Delta)^{-1/2}, \qquad \mathcal R_2 := \partial_y(-\Delta)^{-1/2}.
\end{align*}
Consequently, $\partial_i\partial_{i'}(-\Delta)^{-1} = \mathcal R_i\mathcal R_{i'}$. Because these operators act exclusively on the nonzero horizontal modes in our nonlocal pressure coupling, they correspond to the non-singular, uniformly bounded Fourier symbols:
\begin{align*}
  \widehat{\mathcal R}_1(k,\xi) = \frac{ik}{\sqrt{k^2+\xi^2}}, \qquad \widehat{\mathcal R}_2(k,\xi) = \frac{i\xi}{\sqrt{k^2+\xi^2}}, \qquad k\neq 0.
\end{align*}

Applying the sheared Sobolev operator $\Lambda^N_t$ to system~\eqref{main}, the differentiated velocity and magnetic fields evolve according to:
\begin{equation}\label{quanju2}
\left\{
\begin{aligned}
&\partial_t \bigl(\Lambda^N_t \mathbf{u}\bigr) + y \partial_x \bigl(\Lambda^N_t \mathbf{u}\bigr)
+ (\Lambda^N_tu^2,0)^{\mathrm{T}}
- \beta \partial_x \Lambda^N_t \mathbf{b}
- \nu \partial_{yy}^2 \Lambda^N_t \mathbf{u} + \nabla \Lambda^N_t P
= \Lambda^N_t \mathcal{NL}_{\mathbf{u}},\\[2mm]
&\partial_t \bigl(\Lambda^N_t \mathbf{b}\bigr) + y \partial_x \bigl(\Lambda^N_t \mathbf{b}\bigr)
- (\Lambda^N_tb^2,0)^{\mathrm{T}} - \beta \partial_x \Lambda^N_t \mathbf{u}
- \mu \partial_{yy}^2 \Lambda^N_t \mathbf{b}
= \Lambda^N_t \mathcal{NL}_{\mathbf{b}},\\[1mm]
&\operatorname{div} \mathbf{u} = \operatorname{div} \mathbf{b} = 0,\\[1mm]
&\mathbf{u}(0,x,y) = \mathbf{u}_{\rm in}(x,y), \qquad \mathbf{b}(0,x,y) = \mathbf{b}_{\rm in}(x,y).
\end{aligned}
\right.
\end{equation}
Here, the nonlinear transport and stretching terms are defined as
\begin{align*}
\mathcal{NL}_{\mathbf{u}} := - \mathbf{u} \cdot \nabla \mathbf{u} + \mathbf{b} \cdot \nabla \mathbf{b},
\qquad
\mathcal{NL}_{\mathbf{b}} := - \mathbf{u} \cdot \nabla \mathbf{b} + \mathbf{b} \cdot \nabla \mathbf{u},
\end{align*}
and the associated pressure gradient takes the form
\begin{align*}
\nabla \Lambda^N_t P
= \nabla \big( 2 \partial_x(-\Delta)^{-1} \Lambda^N_t u^2 \big)
+ \nabla \big( \mathcal{R}_i \mathcal{R}_{i'} \Lambda^N_t ( u^i u^{i'} - b^i b^{i'} ) \big).
\end{align*}

To capture the Alfv\'enic coupling, we define the modified energy functional:
\begin{equation}\label{quanju1}
\begin{aligned}
    E(t)
    :=&
    \big\|
        \sqrt{\mathcal{W}}\Lambda_t^N(\mathbf u,\mathbf b)
    \big\|_{L^2}^2+
    \operatorname{Re}
    \left\langle
        \frac{\mathbf 1_{k\neq0}}{ik\beta}\Lambda_t^N\widehat b_k^2,
        \mathcal{W}_k\Lambda_t^N\widehat u_k^1
    \right\rangle-
    \operatorname{Re}
    \left\langle
        \frac{\mathbf 1_{k\neq0}}{ik\beta}\Lambda_t^N\widehat u_k^2,
        \mathcal{W}_k\Lambda_t^N\widehat b_k^1
    \right\rangle.
\end{aligned}
\end{equation}
We also define the dissipation and the Cauchy--Kovalevskaya (CK) terms generated by the multiplier:
\begin{align*}
\operatorname{Dis}_1(t)
&:=
\sum_{k\in\mathbb Z}\int_{\mathbb R}
\xi^2\mathcal W_k(t,\xi)\Lambda_t^{2N}(k,\xi)
\left(
\nu|\widehat{\mathbf u}_k(t,\xi)|^2
+
\mu|\widehat{\mathbf b}_k(t,\xi)|^2
\right)\,d\xi,
\\
\operatorname{CK}_1(t)
&:=
\kappa^{1/3}
\sum_{k\in\mathbb Z}\int_{\mathbb R}
|k|^{2/3}\mathcal A_k(t,\xi)\Lambda_t^{2N}(k,\xi)
|(\widehat{\mathbf u}_k,\widehat{\mathbf b}_k)(t,\xi)|^2
\,d\xi,
\\
\operatorname{CK}_2(t)
&:=
\sum_{k \neq 0}\int_{\mathbb R}
\delta_k'(\xi)\mathcal A_k(t,\xi)\Lambda_t^{2N}(k,\xi)
|(\widehat{\mathbf u}_k,\widehat{\mathbf b}_k)(t,\xi)|^2
\,d\xi,
\\
\operatorname{CK}_3(t)
&:=
\sum_{k\in\mathbb Z}\int_{\mathbb R}
\mathcal G_k(t,\xi)\mathcal A_k(t,\xi)
\Lambda_t^{2N}(k,\xi)
|(\widehat{\mathbf u}_k,\widehat{\mathbf b}_k)(t,\xi)|^2
\,d\xi.
\end{align*}
For $j=1,2,3$, we decompose
\begin{align*}
\operatorname{CK}_j(t)
=
\operatorname{CK}_{j,\mathbf u}(t)
+
\operatorname{CK}_{j,\mathbf b}(t),
\end{align*}
where
\begin{align*}
\operatorname{CK}_{1,\mathbf u}(t)
&:=
\kappa^{1/3}
\sum_{k\in\mathbb Z}\int_{\mathbb R}
|k|^{2/3}\mathcal A_k(t,\xi)
\Lambda_t^{2N}(k,\xi)
|\widehat{\mathbf u}_k(t,\xi)|^2\,d\xi,
\\
\operatorname{CK}_{2,\mathbf u}(t)
&:=
\sum_{k\neq0}\int_{\mathbb R}
\delta_k'(\xi)\mathcal A_k(t,\xi)
\Lambda_t^{2N}(k,\xi)
|\widehat{\mathbf u}_k(t,\xi)|^2\,d\xi,
\\
\operatorname{CK}_{3,\mathbf u}(t)
&:=
\sum_{k\in\mathbb Z}\int_{\mathbb R}
\mathcal G_k(t,\xi)\mathcal A_k(t,\xi)
\Lambda_t^{2N}(k,\xi)
|\widehat{\mathbf u}_k(t,\xi)|^2\,d\xi.
\end{align*}
The quantities $\operatorname{CK}_{j,\mathbf b}$ are defined
analogously.

We now state the  {\it a priori} energy estimate.

\begin{proposition}\label{quanju3}
Let $N \geq 4$ and  $\vartheta \in (0,1/6)$. For $\mu, \nu \in (0,1]$, define $\kappa := \min\{\mu,\nu\}$. Assume the background magnetic field satisfies
\begin{align*}
    |\beta|>7\pi+1\quad\hbox{and}\quad
	|\beta| \geq \frac{6}{1-6\vartheta}\frac{\mu+\nu}{\sqrt{\mu\nu}}.
\end{align*}
Define the structural constants:
\begin{align*}
	c_D := \frac{1}{6} - \frac{\mu+\nu}{|\beta|\sqrt{\mu\nu}}, \quad
	c_1 := \frac{1}{6\pi} - \frac{7\pi+1}{6\pi|\beta|}, \quad
	c_2 := \frac{1}{6D_\delta} - \frac{12D_\delta+1}{12D_\delta|\beta|}, \quad
	c_3 := 1 - \frac{1}{2|\beta|}.
\end{align*}
The condition on $\beta$  guarantees $c_D \geq \vartheta > 0$, $c_1 > 0$, $c_2 > 0$, and $c_3 > 0$.

Define the energy norms:
\begin{align*}
	X_{\mathbf{u}}(t) &:= \big\|\sqrt{\mathcal{W}}\Lambda_t^N\mathbf{u}(t)\big\|_{L^2},
	&
	X_{\mathbf{b}}(t) &:= \big\|\sqrt{\mathcal{W}}\Lambda_t^N\mathbf{b}(t)\big\|_{L^2},\\
	X_{\mathbf{u},\neq}(t) &:= \big\|\sqrt{\mathcal{W}}\Lambda_t^N\mathbf{u}_{\neq}(t)\big\|_{L^2},
	&
	X_{\mathbf{b},\neq}(t) &:= \big\|\sqrt{\mathcal{W}}\Lambda_t^N\mathbf{b}_{\neq}(t)\big\|_{L^2},\\
	X(t) &:= \big\|\sqrt{\mathcal{W}}\Lambda_t^N(\mathbf{u},\mathbf{b})(t)\big\|_{L^2},
	&
	H_{\neq}(t) &:= \big\||\partial_x|^{1/3}\sqrt{\mathcal{W}}\Lambda_t^N(\mathbf{u}_{\neq},\mathbf{b}_{\neq})(t)\big\|_{L^2},
\end{align*}
and the associated nonlinear remainders:
\begin{align*}
	\mathcal{Z}_1(t) &:= X_{\mathbf{b},\neq}(t)X_{\mathbf{u},\neq}(t)X_{\mathbf{u}}(t), &
	\mathcal{Z}_2(t) &:= X_{\mathbf{b},\neq}^2(t)X_{\mathbf{b}}(t), &
	\mathcal{Z}_3(t) &:= X_{\mathbf{u},\neq}^2(t)X_{\mathbf{b}}(t), \\
	\mathcal{Z}_4(t) &:= \kappa^{-\frac{1}{2}}\operatorname{Dis}_1^{\frac{1}{2}} \operatorname{CK}_2^{\frac{1}{2}}X(t), &
	\mathcal{Z}_5(t) &:= X_{\mathbf{u},\neq}^2(t)X_{\mathbf{u}}(t), &
	\mathcal{Z}_6(t) &:= X_{\mathbf{b},\neq}^2(t)X_{\mathbf{u}}(t),
\end{align*}
together with the remaining nonlinear remainders
\begin{align*}
\mathcal{Z}_7(t)
  :=&
  \kappa^{-1/6}H_{\neq}^2(t)X(t),\nn\\
\mathcal Z_8(t)
:=\;&
\kappa^{-1/3}
X_{\mathbf u}(t)
\operatorname{CK}_1^{1/2}(t)
\left(
\operatorname{CK}_2^{1/2}(t)
+
\operatorname{CK}_3^{1/2}(t)
\right)\nn\\
&+
\kappa^{-5/12}
X(t)
\operatorname{CK}_3^{1/2}(t)
\operatorname{CK}_2^{1/4}(t)
\operatorname{CK}_1^{1/4}(t).
\end{align*}

Then there exists a positive constant $C = C(N, \beta, \vartheta)$, independent of $t$, $\mu$, and $\nu$ such that for all $t \geq 0$,
\begin{equation*}
	\frac{d}{dt}E(t) + c_D\operatorname{Dis}_1(t) + c_1\operatorname{CK}_1(t) + c_2\operatorname{CK}_2(t) + c_3\operatorname{CK}_3(t) \leq C\sum_{i=1}^{8}\mathcal{Z}_i(t).
\end{equation*}

Furthermore, $E(t)$ is coercive: there exist constants $0 < c_E \leq C_E < \infty$, depending only on $\beta$ and the uniform equivalence between $\mathcal{W}$ and $\mathcal{A}$, such that
\begin{equation}\label{quanju4}
	c_E X^2(t) \leq E(t) \leq C_E X^2(t).
\end{equation}
\end{proposition}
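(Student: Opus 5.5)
I would first prove the coercivity \eqref{quanju4}, since it is purely algebraic. On each Fourier mode with $k\neq0$, the cross terms in \eqref{quanju1} are bounded by $\frac{1}{|k\beta|}\mathcal W_k\Lambda_t^{2N}(|\widehat b^2_k||\widehat u^1_k|+|\widehat u^2_k||\widehat b^1_k|)$. Using $|k|\ge1$ and Young's inequality, this is at most $\frac{1}{2|\beta|}\mathcal W_k\Lambda_t^{2N}|(\widehat{\mathbf u}_k,\widehat{\mathbf b}_k)|^2$. Since $|\beta|>7\pi+1>1$, we get $(1-\frac{1}{2|\beta|})X^2\le E\le(1+\frac{1}{2|\beta|})X^2$.

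For the energy inequality, I would work in Fourier variables in the original coordinates and differentiate $E$ using \eqref{quanju2}. In those variables $y\partial_x$ becomes $-k\partial_\xi$, so the material derivative of the weight is $(\partial_t-k\partial_\xi)(\mathcal W_k\Lambda_t^{2N})$. Since $(\partial_t-k\partial_\xi)\Lambda_t^N=0$, only $\mathcal W$ is differentiated. The diagonal part then gives
\[
-\sum_k\int\bigl[2\nu\xi^2\mathcal W_k+(-\partial_t+k\partial_\xi)\mathcal W_k\bigr]\Lambda_t^{2N}|\widehat{\mathbf u}_k|^2
\]
and the analogous expression for $\mathbf b$. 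Lemma~\ref{chengziyinli}, via \eqref{pre8} with $\sigma=\nu$ or $\sigma=\mu$, bounds this below by $\frac16\operatorname{Dis}_1+\frac1{6\pi}\operatorname{CK}_1+\frac1{6D_\delta}\operatorname{CK}_2+\operatorname{CK}_3$.

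The linear terms are handled as follows.
\begin{itemize}
\item The Alfv\'en terms $-\beta\partial_x$ cancel exactly in the diagonal energy after integration by parts in $\xi$ is not needed, because they are antisymmetric with a common weight.
\item The lift-up terms $(u^2,0)$ and $-(b^2,0)$, together with the linear pressure $\nabla(2\partial_x(-\Delta)^{-1}u^2)$, produce the terms $\operatorname{Re}\langle \widehat u^2_k,\mathcal W_k\Lambda_t^{2N}\widehat u^1_k\rangle$ and $\operatorname{Re}\langle\widehat b^2_k,\mathcal W_k\Lambda_t^{2N}\widehat b^1_k\rangle$. These are not small.
\item The cross terms in $E$ are designed to cancel these. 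Differentiating $\frac{1}{ik\beta}\langle\widehat b^2,\mathcal W\widehat u^1\rangle$, the coupling $\beta\partial_x$ turns $\widehat b^2$ into $ik\beta\widehat u^2$. This yields $\langle\widehat u^2,\mathcal W\widehat u^1\rangle$ with the opposite sign, and similarly for $\mathbf b$; this mirrors the computation behind \eqref{jiben2}.
\end{itemize}

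The leftover linear errors each carry a factor $1/|\beta|$. They are:
\begin{itemize}
\item the material derivative of $\mathcal W$ acting on the cross terms, bounded by $\frac{1}{|\beta|}$ times the CK quantities;
\item the remaining $\beta$-coupling pieces $|\widehat u^1|^2-|\widehat b^2|^2$-type terms, which recombine to diagonal quantities of size $O(1/|\beta|)$;
\item the pressure contributions, using the bounded Riesz symbols;
\item the dissipation mismatch $(\nu+\mu)\xi^2\frac{1}{|k\beta|}|\widehat b^2||\widehat u^1|$.
\end{itemize}
I would bound the mismatch by $\frac{\nu+\mu}{|\beta|\sqrt{\mu\nu}}\sqrt{\nu}|\xi\widehat u|\sqrt{\mu}|\xi\widehat b|$, which is absorbed into $\operatorname{Dis}_1$ and produces $c_D$. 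I would bound the others by $\frac{C}{|\beta|}(\operatorname{CK}_1+\operatorname{CK}_2+\operatorname{CK}_3)$, with the precise bookkeeping $\frac{7\pi+1}{6\pi|\beta|}$, $\frac{12D_\delta+1}{12D_\delta|\beta|}$ and $\frac1{2|\beta|}$. The lift-up residual is of size $\kappa^{1/3}$-independent, so it would have to be placed in $\operatorname{CK}_1$ using $|k|\ge1$. I expect that step to need care about whether $\kappa^{1/3}|k|^{2/3}$ really dominates, or whether the cancellation must be exact.

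The nonlinear terms are the main obstacle. For the transport terms $\mathbf u\cdot\nabla\mathbf u$ and $\mathbf u\cdot\nabla\mathbf b$, I would use incompressibility to write the trilinear form with the symmetrized commutator $\ell\mathcal W_k\Lambda^{2N}_t(k,\xi)-k\mathcal W_\ell\Lambda^{2N}_t(\ell,\eta)$. Lemma~\ref{commutator} controls this commutator, and Lemma~\ref{leyoung} then turns it into the products $\mathcal Z_1,\ldots,\mathcal Z_6$. The $\kappa^{-1/6}|k|^{1/3}|\ell|^{1/3}$ term of \eqref{keyjiaohuanzi} gives $\mathcal Z_7$ via $H_{\neq}$. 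The stretching terms $\mathbf b\cdot\nabla\mathbf b$ and $\mathbf b\cdot\nabla\mathbf u$ must be paired so that their top-order parts cancel between the $\mathbf u$ and $\mathbf b$ equations; the resulting commutator is treated identically. The nonlinear pressure is bounded by the same product estimates.

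The hardest part is the region $|k|\lesssim\kappa^{-1/2}$ near critical times and the zero-mode interactions $k=0$, $\ell\neq0$, where \eqref{hdga} fails. There I would use $|\partial_\xi\mathcal M^{(2)}|,|\partial_\xi\mathcal M^{(3)}|\lesssim|k|^{-1}$ from \eqref{pre4}. The resulting weight differences would be charged to $\delta'_k$ and $\mathcal G_k$, splitting the frequencies by whether $\xi/k$ or $(\xi-\eta)/(k-\ell)$ is resonant. This produces $\mathcal Z_4$ and $\mathcal Z_8$, with powers of $\kappa$ coming from $\kappa^{1/2}|k|\le2$ and from $\operatorname{Dis}_1$ for the zero modes. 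I expect matching exactly the exponents $\kappa^{-1/3}$ and $\kappa^{-5/12}$ of $\mathcal Z_8$ to be the most delicate bookkeeping.
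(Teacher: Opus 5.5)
Your architecture is the paper's: coercivity by Young's inequality, cancellation of the lift-up terms by the cross terms, coercivity from \eqref{pre8}, the dissipation mismatch absorbed into $\operatorname{Dis}_1$, and the material derivative of $\mathcal W$ in the cross terms charged to the CK quantities. However, the linear step you flag as uncertain is a genuine gap, and neither of your two options closes it. The $\beta$-couplings inside the cross terms reproduce $\operatorname{Re}\langle\Lambda_t^Nu^2,\mathcal W\Lambda_t^Nu^1\rangle-\operatorname{Re}\langle\Lambda_t^Nb^2,\mathcal W\Lambda_t^Nb^1\rangle$ exactly, so there are no residual $|\widehat u^1|^2-|\widehat b^2|^2$-type terms. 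The lift-up pieces generated by differentiating the two cross terms also cancel each other. What survives is the linear nonlocal pressure $\nabla(2\partial_x(-\Delta)^{-1}u^2)$. It vanishes in the diagonal energy by incompressibility, so contrary to your first bullet it contributes nothing there. It does not vanish against the componentwise test functions of the cross terms, and it leaves $\mathscr L_3=2\operatorname{Re}\langle\frac{1}{ik\beta}\Lambda_t^N\widehat b_k^2,\frac{k^2-\xi^2}{k^2+\xi^2}\mathcal W_k\Lambda_t^N\widehat u_k^2\rangle_{\neq}$. This is bounded only by $\frac{1}{|\beta|}\|\sqrt{\mathcal W}\Lambda_t^N(u^2_{\neq},b^2_{\neq})\|_{L^2}^2$, a $\kappa$-independent quantity. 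Putting it into $\operatorname{CK}_1$ using only $|k|\geq1$ costs $\kappa^{-1/3}$, which would require $|\beta|\gtrsim\kappa^{-1/3}$. The hypotheses do not give this; for $\mu=\nu$ they are $\kappa$-independent.

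The missing idea is the split $1=(1-\chi_k)+\chi_k$ with $\chi_k=\chi(\kappa^{1/2}|k|)$. On the support of $1-\chi_k$ one has $\kappa^{1/3}|k|^{2/3}\geq1$, which gives $\frac{1}{|\beta|}\operatorname{CK}_1$. On the low-frequency part you use incompressibility, $|\widehat u_k^2|^2=\langle\xi/k\rangle^{-2}|\widehat{\mathbf u}_k|^2\leq\delta'(\xi/k)|\widehat{\mathbf u}_k|^2$ (via $[\log(1+r)]^2\leq r$ for $r\geq1$), which gives $\frac{1}{|\beta|}\operatorname{CK}_2$. This is why $\mathcal M^{(2)}$ is in the weight. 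It is also exactly what produces the $6\pi$ in $7\pi+1$ and the $12D_\delta$ in $c_2$, so the constants you quote cannot come out of your bookkeeping without it.

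The nonlinear sketch has a similar hole. Lemma~\ref{commutator} only controls the horizontal-transport symbol $\ell\mathcal W_k\Lambda_t^{2N}(k,\xi)-k\mathcal W_\ell\Lambda_t^{2N}(\ell,\eta)$. The vertical-transport symbol $\eta\mathcal W_k\Lambda_t^{2N}(k,\xi)-\xi\mathcal W_\ell\Lambda_t^{2N}(\ell,\eta)$ is not covered, and $|\xi|,|\eta|$ are controlled by $\Lambda_t$ only up to $t(|k|+|\ell|)$. The paper splits according to whether $\Lambda_t(k-\ell,\xi-\eta)\geq\frac12(\Lambda_t(k,\xi)+\Lambda_t(\ell,\eta))$.
\begin{itemize}
\item When the frequencies are comparable, it uses a multiplier/derivative/Sobolev decomposition, and the multiplier part gives $\mathcal Z_4$ by paying $\eta$ with $\operatorname{Dis}_1$.
\item When the advecting mode dominates, the factor $t$ is paid by the enhanced-dissipation weight via $te^{-\frac{1}{64}\kappa^{1/3}t}\lesssim\kappa^{-1/3}$. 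One then splits $1=\chi_k\chi_m+(1-\chi_k)+\chi_k(1-\chi_m)$. The localized echo piece is handled by Cauchy--Schwarz against precisely the kernel defining $\mathcal G_k$, so it is bounded by $\operatorname{CK}_3$. The companion factor is controlled through $\langle\zeta/m\rangle|\widehat u^2_m|=|\widehat{\mathbf u}_m|$ and $\delta'$.
\end{itemize}
That is the source of $\mathcal Z_8$ and of its exponents $\kappa^{-1/3}$ and $\kappa^{-5/12}$. Your mechanism, which charges low-$k$ weight differences to $\delta'_k$ and $\mathcal G_k$ through $|\partial_\xi\mathcal M^{(2,3)}|\lesssim|k|^{-1}$, does not see this $t$-growth. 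In the paper the low-$k$ horizontal weight difference is simply bounded by $\mathcal A_k^{1/2}\mathcal A_\ell^{1/2}$ and feeds $\mathcal Z_7$ via $|k|+|\ell|\lesssim\kappa^{-1/6}|k|^{1/3}|\ell|^{1/3}$.

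Finally, the cross-energy nonlinearities $\mathscr N_2$--$\mathscr N_4$ have no symmetric commutator structure, and your sketch does not address them. They close only because $1/(ik)$ absorbs horizontal derivatives while incompressibility converts vertical ones, for example $(\xi-\eta)\widehat w^2_{k-\ell}=-(k-\ell)\widehat w^1_{k-\ell}$ and $\frac{\xi}{k}\widehat b^2_k=-\widehat b^1_k$.
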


\begin{proof}
All implicit constants below may depend on
$N$, $\beta$, and $\vartheta$, but are
independent of $t$, $\mu$, and $\nu$.

\medskip
\noindent
\textbf{Step 1. Coercivity of the energy.}

By the definition of $E(t)$, the two cross terms are supported on
$k\neq0$. Since $|k|^{-1}\leq1$ on this set, the Cauchy--Schwarz
inequality gives
\begin{align}\label{quanju5}
\left|
    \operatorname{Re}
    \left\langle
        \frac{1}{ik\beta}\Lambda_t^N\widehat b_k^2,
        \mathcal{W}_k(t,\xi)\Lambda_t^N\widehat u_k^1
    \right\rangle
\right|
\leq&
\frac{1}{|\beta|}
\big\|
    \sqrt{\mathcal{W}}\Lambda_t^N b_{\neq}^2
\big\|_{L^2}
\big\|
    \sqrt{\mathcal{W}}\Lambda_t^N u_{\neq}^1
\big\|_{L^2}
\nonumber\\
\leq&
\frac{1}{2|\beta|}
\left(
    \big\|
        \sqrt{\mathcal{W}}\Lambda_t^N b_{\neq}^2
    \big\|_{L^2}^2
    +
    \big\|
        \sqrt{\mathcal{W}}\Lambda_t^N u_{\neq}^1
    \big\|_{L^2}^2
\right),
\end{align}
and, similarly,
\begin{align}\label{quanju6}
&\left|
    \operatorname{Re}
    \left\langle
        \frac{1}{ik\beta}\Lambda_t^N\widehat u_k^2,
        \mathcal{W}_k(t,\xi)\Lambda_t^N\widehat b_k^1
    \right\rangle
\right|
\leq
\frac{1}{2|\beta|}
\left(
    \big\|
        \sqrt{\mathcal{W}}\Lambda_t^N u_{\neq}^2
    \big\|_{L^2}^2
    +
    \big\|
        \sqrt{\mathcal{W}}\Lambda_t^N b_{\neq}^1
    \big\|_{L^2}^2
\right).
\end{align}
Adding \eqref{quanju5} and
\eqref{quanju6}, we obtain the  estimate
\begin{equation}\label{quanju7}
\begin{aligned}
&\left|
    \operatorname{Re}
    \left\langle
        \frac{1}{ik\beta}\Lambda_t^N\widehat b_k^2,
        \mathcal{W}_k(t,\xi)\Lambda_t^N\widehat u_k^1
    \right\rangle
\right|
+
\left|
    \operatorname{Re}
    \left\langle
        \frac{1}{ik\beta}\Lambda_t^N\widehat u_k^2,
        \mathcal{W}_k(t,\xi)\Lambda_t^N\widehat b_k^1
    \right\rangle
\right|\\
&\qquad\leq
\frac{1}{2|\beta|}
\big\|
    \sqrt{\mathcal{W}}\Lambda_t^N
    (\mathbf u_{\neq},\mathbf b_{\neq})
\big\|_{L^2}^2
\leq
\frac{1}{2|\beta|}X^2(t).
\end{aligned}
\end{equation}
It therefore follows from \eqref{quanju1} and
\eqref{quanju7} that
\begin{equation}\label{quanju8}
    \left(1-\frac{1}{2|\beta|}\right)X^2(t)
    \leq E(t)
    \leq
    \left(1+\frac{1}{2|\beta|}\right)X^2(t).
\end{equation}
In particular, since the assumption on the background magnetic field
implies
$
    |\beta|>{3}/{2},
$
the lower coefficient in \eqref{quanju8} is
strictly positive. Thus \eqref{quanju4} holds with
\begin{equation*}
    c_E:=1-\frac{1}{2|\beta|}>0,
    \qquad
    C_E:=1+\frac{1}{2|\beta|}.
\end{equation*}

\medskip\noindent
\textbf{Step 2. Weighted energy identity.}

\smallskip

Applying $\Lambda_t^N$ to the velocity equation in
\eqref{quanju2}, taking twice the real part of its $L^2$ inner product
with $\mathcal{W}\Lambda_t^N\mathbf u$, and integrating by parts in
$\xi$, we obtain
\begin{align}\label{quanju10}
&\frac{d}{dt}
 \left\|
   \sqrt{\mathcal{W}}\Lambda_t^N\mathbf u
 \right\|_{L^2}^2
 +2\nu
 \left\|
   \partial_y\sqrt{\mathcal{W}}\Lambda_t^N\mathbf u
 \right\|_{L^2}^2
 +\sum_{k\in\mathbb Z}\int_{\mathbb R}
  \bigl(-\partial_t+k\partial_\xi\bigr)
  \mathcal{W}_k(t,\xi)
  \left|
    \Lambda_t^N(k,\xi)\widehat{\mathbf u}_k(t,\xi)
  \right|^2
  \,d\xi
\notag\\
&\qquad
 +2\operatorname{Re}
  \left\langle
    \Lambda_t^Nu^2,
    \mathcal{W}\Lambda_t^Nu^1
  \right\rangle
 -2\beta\operatorname{Re}
  \left\langle
    \partial_x\Lambda_t^N\mathbf b,
    \mathcal{W}\Lambda_t^N\mathbf u
  \right\rangle
\notag\\
&\quad=
 2\operatorname{Re}
 \left\langle
   \Lambda_t^N\mathcal{NL}_{\mathbf u},
   \mathcal{W}\Lambda_t^N\mathbf u
 \right\rangle .
\end{align}
Here we used
$
  [\Lambda_t^N,\partial_t+y\partial_x]=0
$
and the fact that $\mathcal{W}$ is a real, positive, self-adjoint scalar
Fourier multiplier. In particular, $\mathcal{W}$ commutes with spatial
derivatives and preserves the divergence-free constraint. Hence
\begin{align*}
  \operatorname{div}
  \bigl(\mathcal{W}\Lambda_t^N\mathbf u\bigr)
  =
  \mathcal{W}\Lambda_t^N\operatorname{div}\mathbf u
  =0.
\end{align*}
Consequently, the pressure contribution vanishes:
\begin{align*}
\begin{aligned}
  \left\langle
    \nabla\Lambda_t^NP,
    \mathcal{W}\Lambda_t^N\mathbf u
  \right\rangle
  =
  -\left\langle
    \Lambda_t^NP,
    \operatorname{div}
    \bigl(\mathcal{W}\Lambda_t^N\mathbf u\bigr)
  \right\rangle
=0.
\end{aligned}
\end{align*}

The same calculation for the magnetic equation gives
\begin{align}\label{quanju11}
&\frac{d}{dt}
 \left\|
   \sqrt{\mathcal{W}}\Lambda_t^N\mathbf b
 \right\|_{L^2}^2
 +2\mu
 \left\|
   \partial_y\sqrt{\mathcal{W}}\Lambda_t^N\mathbf b
 \right\|_{L^2}^2
 +\sum_{k\in\mathbb Z}\int_{\mathbb R}
  \bigl(-\partial_t+k\partial_\xi\bigr)
  \mathcal{W}_k(t,\xi)
  \left|
    \Lambda_t^N(k,\xi)\widehat{\mathbf b}_k(t,\xi)
  \right|^2
  \,d\xi
\notag\\
&\quad=
 2\operatorname{Re}
 \left\langle
   \Lambda_t^Nb^2,
   \mathcal{W}\Lambda_t^Nb^1
 \right\rangle
 +2\beta\operatorname{Re}
 \left\langle
   \partial_x\Lambda_t^N\mathbf u,
   \mathcal{W}\Lambda_t^N\mathbf b
 \right\rangle
 +2\operatorname{Re}
 \left\langle
   \Lambda_t^N\mathcal{NL}_{\mathbf b},
   \mathcal{W}\Lambda_t^N\mathbf b
 \right\rangle .
\end{align}
Since $\partial_x$ is skew-adjoint and commutes with
$\mathcal{W}\Lambda_t^N$, we have
\begin{align*}
\begin{aligned}
&\operatorname{Re}
 \left\langle
   \partial_x\Lambda_t^N\mathbf b,
   \mathcal{W}\Lambda_t^N\mathbf u
 \right\rangle
 +\operatorname{Re}
 \left\langle
   \partial_x\Lambda_t^N\mathbf u,
   \mathcal{W}\Lambda_t^N\mathbf b
 \right\rangle
 =0.
\end{aligned}
\end{align*}
Adding \eqref{quanju10} and \eqref{quanju11}, we therefore
obtain
\begin{align}\label{quanju12}
&\frac{d}{dt}
 \left\|
   \sqrt{\mathcal{W}}\Lambda_t^N(\mathbf u,\mathbf b)
 \right\|_{L^2}^2
 +2
 \left\|
   \partial_y\sqrt{\mathcal{W}}\Lambda_t^N
   \bigl(\sqrt{\nu}\,\mathbf u,\sqrt{\mu}\,\mathbf b\bigr)
 \right\|_{L^2}^2
\notag\\
&\quad
 +\sum_{k\in\mathbb Z}\int_{\mathbb R}
  \bigl(-\partial_t+k\partial_\xi\bigr)
  \mathcal{W}_k(t,\xi)
  \left|
    \Lambda_t^N(k,\xi)
    \bigl(\widehat{\mathbf u}_k,\widehat{\mathbf b}_k\bigr)(t,\xi)
  \right|^2
  \,d\xi
\notag\\
&=
 -2\operatorname{Re}
 \left\langle
   \Lambda_t^Nu^2,
   \mathcal{W}\Lambda_t^Nu^1
 \right\rangle
 +2\operatorname{Re}
 \left\langle
   \Lambda_t^Nb^2,
   \mathcal{W}\Lambda_t^Nb^1
 \right\rangle
\notag\\
&\quad
 +2\operatorname{Re}
 \left\langle
   \Lambda_t^N\mathcal{NL}_{\mathbf u},
   \mathcal{W}\Lambda_t^N\mathbf u
 \right\rangle
 +2\operatorname{Re}
 \left\langle
   \Lambda_t^N\mathcal{NL}_{\mathbf b},
   \mathcal{W}\Lambda_t^N\mathbf b
 \right\rangle .
\end{align}

To compensate for the first two linear terms on the right-hand side
of \eqref{quanju12}, we introduce two cross-energy functionals. The
basic energy identity \eqref{quanju12} is summed over all horizontal
Fourier modes $k\in\mathbb Z$. However, the cross-energy functionals
below are restricted to the nonzero horizontal modes, since they
contain the factor $1/(ik\beta)$.

For completeness, we explain why the zero horizontal mode does not
produce any additional contribution. For $k=0$, the
incompressibility conditions give
\[
\xi\widehat{u_0^2}(\xi)=0,
\qquad
\xi\widehat{b_0^2}(\xi)=0.
\]
Since $\widehat{u_0^2},\widehat{b_0^2}\in L^2(\mathbb R)$, it follows
that
\[
\widehat{u_0^2}(\xi)
=
\widehat{b_0^2}(\xi)
=
0
\quad\text{for almost every }\xi\in\mathbb R.
\]
Thus, the zero horizontal mode contributes nothing to the linear
terms that are compensated by the cross-energy functionals. The
restriction to $k\neq0$ is therefore consistent, and the ensuing
cancellation is exact.

In what follows, every Fourier inner product containing the factor
$1/(ik\beta)$ is understood to be restricted to the nonzero
horizontal modes:
\begin{align*}
  \langle f,g\rangle_{\neq}
  :=
  \sum_{k\neq0}\int_{\mathbb R}
  f_k(\xi)\overline{g_k(\xi)}\,d\xi.
\end{align*}
When no confusion is possible, we suppress the subscript $\neq$. The first cross-energy identity is
\begin{align*}
&\frac{d}{dt}
 \operatorname{Re}
 \left\langle
   \frac{1}{ik\beta}
   \Lambda_t^N(k,\xi)\widehat{b_k^2},
   \mathcal{W}_k(t,\xi)\Lambda_t^N(k,\xi)\widehat{u_k^1}
 \right\rangle_{\neq}
\notag\\
&\quad=
 \operatorname{Re}
 \left\langle
   \Lambda_t^Nu^2,
    \mathcal W_k(t,\xi)\Lambda_t^Nu^1
 \right\rangle
 -
 \operatorname{Re}
 \left\langle
   \Lambda_t^Nb^2,
    \mathcal{W}_k(t,\xi)\Lambda_t^Nb^1
 \right\rangle+
 \operatorname{Re}
 \left\langle
   \frac{1}{ik\beta}
   \Lambda_t^N(k,\xi)\widehat{b_k^2},
   \widehat g_2(u^1,b^1)
 \right\rangle_{\neq}\nn\\
 &\qquad
 -
 \operatorname{Re}
 \left\langle
   \frac{1}{ik\beta}
   \Lambda_t^N(k,\xi)\widehat{b_k^2},
   \mathcal{W}_k(t,\xi)\Lambda_t^N(k,\xi)\widehat{u_k^2}
 \right\rangle_{\neq}+
 \operatorname{Re}
 \left\langle
   \frac{1}{ik\beta}\widehat g_1(u^2,b^2),
   \mathcal{W}_k(t,\xi)\Lambda_t^N(k,\xi)\widehat{u_k^1}
 \right\rangle_{\neq}.
\end{align*}
Here
\begin{align*}
\widehat g_1(u^2,b^2)
:=&
 \mathcal{F}{ \bigl(\Lambda_t^N
(-\mathbf u\cdot\nabla b^2)\bigr)}
 +
\mathcal{F}{\bigl(\Lambda_t^N
 (\mathbf b\cdot\nabla u^2)\bigr)} +k\partial_\xi
 \left(
   \Lambda_t^N(k,\xi)\widehat{b_k^2}
 \right)
 -\mu\xi^2
  \Lambda_t^N(k,\xi)\widehat{b_k^2},
\end{align*}
whereas
\begin{align*}
\widehat g_2(u^1,b^1)&
:=
 \mathcal W_k(t,\xi)
 \mathcal{F}{\bigl(\Lambda_t^N
 (-\mathbf u\cdot\nabla u^1)\bigr)}
 +
 \mathcal W_k(t,\xi)
\mathcal{F}{\bigl(\Lambda_t^N
 (\mathbf b\cdot\nabla b^1)\bigr)}
+(\partial_t\mathcal W_k(t,\xi))
 \left(
   \Lambda_t^N(k,\xi)\widehat{u_k^1}
 \right)\notag\\
& +\mathcal W_k(t,\xi) k\partial_\xi
 \left(
   \Lambda_t^N(k,\xi)\widehat{u_k^1}
 \right)
-\nu\xi^2\mathcal W_k(t,\xi)
  \Lambda_t^N(k,\xi)\widehat{u_k^1}+\frac{2k^2}{k^2+\xi^2}\,
  \mathcal W_k(t,\xi)\Lambda_t^N(k,\xi)\widehat{u_k^2}\\
&-ik\,
  \widehat{\mathcal R}_i
  \widehat{\mathcal R}_{i'}\,
  \mathcal W_k(t,\xi)\Lambda_t^N(k,\xi)
  \mathcal{F}{\bigl(
    (u^iu^{i'}-b^ib^{i'})\bigr)
  }.
\end{align*}

The second cross-energy identity is
\begin{align*}
&\frac{d}{dt}
 \left[
 -\operatorname{Re}
 \left\langle
   \frac{1}{ik\beta}
   \Lambda_t^N(k,\xi)\widehat{u_k^2},
   \mathcal W_k(t,\xi)\Lambda_t^N(k,\xi)\widehat{b_k^1}
 \right\rangle_{\neq}
 \right]
\notag\\
&\quad=
 \operatorname{Re}
 \left\langle
   \Lambda_t^Nu^2,
    \mathcal W_k(t,\xi)\Lambda_t^Nu^1
 \right\rangle
 -
 \operatorname{Re}
 \left\langle
   \Lambda_t^Nb^2,
    \mathcal W_k(t,\xi)\Lambda_t^Nb^1
 \right\rangle -
 \operatorname{Re}
 \left\langle
   \frac{1}{ik\beta}
   \Lambda_t^N(k,\xi)\widehat{u_k^2},
   \widehat g_4(u^1,b^1)
 \right\rangle_{\neq}
\notag\\
&\qquad-
 \operatorname{Re}
 \left\langle
   \frac{1}{ik\beta}
   \Lambda_t^N(k,\xi)\widehat{u_k^2},
   \mathcal W_k(t,\xi)\Lambda_t^N(k,\xi)\widehat{b_k^2}
 \right\rangle_{\neq}
 -
 \operatorname{Re}
 \left\langle
   \frac{1}{ik\beta}\widehat g_3(u^2,b^2),
   \mathcal W_k(t,\xi)\Lambda_t^N(k,\xi)\widehat{b_k^1}
 \right\rangle_{\neq}.
\end{align*}
Here
\begin{align*}
\widehat g_3(u^2,b^2)
:=&
 \mathcal{F}{\bigl(\Lambda_t^N
 (-\mathbf u\cdot\nabla u^2)\bigr)}
 +
\mathcal{F}{\bigl(\Lambda_t^N
 (\mathbf b\cdot\nabla b^2)\bigr)}
 -i\xi\,
  \widehat{\mathcal R}_i
  \widehat{\mathcal R}_{i'}\,
  \Lambda_t^N(k,\xi)
  \mathcal{F}{\bigl(
    (u^iu^{i'}-b^ib^{i'})\bigr)
  }
\notag\\
&
 +\frac{2k\xi}{k^2+\xi^2}\,
  \Lambda_t^N(k,\xi)\widehat{u_k^2}
 +k\partial_\xi
  \left(
    \Lambda_t^N(k,\xi)\widehat{u_k^2}
  \right) -\nu\xi^2
  \Lambda_t^N(k,\xi)\widehat{u_k^2},
\end{align*}
and
\begin{align*}
\widehat g_4(u^1,b^1)
:=&
 \mathcal W_k(t,\xi)
 \mathcal{F}{\bigl(\Lambda_t^N
 (-\mathbf u\cdot\nabla b^1)\bigr)}
 +
 \mathcal W_k(t,\xi)
 \mathcal{F}{\bigl(\Lambda_t^N
 (\mathbf b\cdot\nabla u^1)\bigr)}
\notag\\
&
 +(\partial_t\mathcal W_k(t,\xi))
 \left(
   \Lambda_t^N(k,\xi)\widehat{b_k^1}
 \right)
 +\mathcal W_k(t,\xi) k\partial_\xi
 \left(
   \Lambda_t^N(k,\xi)\widehat{b_k^1}
 \right)
 -\mu\xi^2\mathcal W_k(t,\xi)
  \Lambda_t^N(k,\xi)\widehat{b_k^1}.
\end{align*}

In differentiating the cross terms, all sums are over $k\neq0$.
The cancellation of the remaining linear terms is exact:
\begin{align*}
&\operatorname{Re}
 \left\langle
   \frac{1}{ik\beta}
   \Lambda_t^N(k,\xi)\widehat b_k^2,
   \mathcal W_k(t,\xi)\Lambda_t^N(k,\xi)\widehat u_k^2
 \right\rangle_{\neq}
+
\operatorname{Re}
 \left\langle
   \frac{1}{ik\beta}
   \Lambda_t^N(k,\xi)\widehat u_k^2,
   \mathcal W_k(t,\xi)\Lambda_t^N(k,\xi)\widehat b_k^2
 \right\rangle_{\neq}
=0.
\end{align*}
Indeed, the two inner products are complex conjugates up to the purely
imaginary factor $1/(ik\beta)$.

Differentiating $E(t)$, substituting the equations for
$(\mathbf u,\mathbf b)$, using $[\Lambda_t^N,\partial_t+y\partial_x]=0,$ and integrating by parts in $\xi$, we arrive at the following exact
weighted energy identity:
\begin{align*}
&\frac{d}{dt}E(t)
+
\left\langle
  \left(
    2\nu\xi^2\mathcal W_k(t,\xi)
    +(-\partial_t+k\partial_\xi)\mathcal W_k(t,\xi)
  \right)
  \Lambda_t^N\widehat{\mathbf u}_k,
  \Lambda_t^N\widehat{\mathbf u}_k
\right\rangle
\notag\\
&\quad+
\left\langle
  \left(
    2\mu\xi^2\mathcal W_k(t,\xi)
    +(-\partial_t+k\partial_\xi)\mathcal W_k(t,\xi)
  \right)
  \Lambda_t^N\widehat{\mathbf b}_k,
  \Lambda_t^N\widehat{\mathbf b}_k
\right\rangle
=
\sum_{j=1}^3{\mathscr L}_j
+
\sum_{j=1}^4{\mathscr N}_j.
\end{align*}

Here
${\mathscr L}_1$,
${\mathscr L}_2$, and
${\mathscr L}_3$
denote the linear errors arising, respectively, from unequal vertical
dissipation, the transport derivative of the multiplier in the cross
energy, and the linear nonlocal coupling. More precisely,
\begin{align*}
{\mathscr L}_1
:=&
 -(\mu+\nu)\operatorname{Re}
 \left\langle
   \frac{\xi^2}{ik\beta}
   \Lambda_t^N\widehat b_k^2,
   \mathcal W_k(t,\xi)\Lambda_t^N\widehat u_k^1
 \right\rangle_{\neq}
 +(\mu+\nu)\operatorname{Re}
 \left\langle
   \frac{\xi^2}{ik\beta}
   \Lambda_t^N\widehat u_k^2,
   \mathcal W_k(t,\xi)\Lambda_t^N\widehat b_k^1
 \right\rangle_{\neq},
\end{align*}
\begin{align*}
{\mathscr L}_2
:=&
 \operatorname{Re}
 \left\langle
   \frac{1}{ik\beta}
   \Lambda_t^N\widehat b_k^2,
   (\partial_t-k\partial_\xi)\mathcal W_k(t,\xi)\,
   \Lambda_t^N\widehat u_k^1
 \right\rangle_{\neq}
 -
 \operatorname{Re}
 \left\langle
   \frac{1}{ik\beta}
   \Lambda_t^N\widehat u_k^2,
   (\partial_t-k\partial_\xi)\mathcal W_k(t,\xi)\,
   \Lambda_t^N\widehat b_k^1
 \right\rangle_{\neq},
\end{align*}
and
\begin{align}\label{quanju22}
{\mathscr L}_3
:=&
 2\operatorname{Re}
 \left\langle
   \frac{1}{ik\beta}
   \Lambda_t^N\widehat b_k^2,
   \frac{k^2-\xi^2}{k^2+\xi^2}\,
   \mathcal W_k(t,\xi)\Lambda_t^N\widehat u_k^2
 \right\rangle_{\neq}.
\end{align}

The multiplier
$  \frac{k^2-\xi^2}{k^2+\xi^2}
$
in \eqref{quanju22} is the Fourier symbol generated by the
linear nonlocal pressure correction associated with the Couette
background.

The terms
${\mathscr N}_1,\ldots,{\mathscr N}_4$
capture all nonlinear transport, magnetic stretching, and pressure
contributions. They are given explicitly as follows:
\begin{align*}
{\mathscr N}_1
:=&
 2\operatorname{Re}
 \left\langle
   \Lambda_t^N\mathcal{NL}_{\mathbf u},
   \mathcal{W}\Lambda_t^N\mathbf u
 \right\rangle
 +2\operatorname{Re}
 \left\langle
   \Lambda_t^N\mathcal{NL}_{\mathbf b},
   \mathcal{W}\Lambda_t^N\mathbf b
 \right\rangle ,
\end{align*}
\begin{align*}
{\mathscr N}_2
:=&
 \operatorname{Re}
 \left\langle
   \frac{1}{ik\beta}
    \mathcal{F}{\bigl(
     \Lambda_t^N
     (
       -\mathbf u\cdot\nabla b^2
       +\mathbf b\cdot\nabla u^2
     )\bigr)
   },
   \mathcal W_k(t,\xi)\Lambda_t^N\widehat u_k^1
 \right\rangle_{\neq}
\notag\\
&\quad
 +
 \operatorname{Re}
 \left\langle
   \frac{1}{ik\beta}
   \Lambda_t^N\widehat b_k^2,
   \mathcal W_k(t,\xi)
   \mathcal{F}{\bigl(
     \Lambda_t^N
     (
       -\mathbf u\cdot\nabla u^1
       +\mathbf b\cdot\nabla b^1
     )\bigr)
   }
 \right\rangle_{\neq},
\end{align*}
\begin{align*}
{\mathscr N}_3
:=&
 \operatorname{Re}
 \left\langle
   \frac{1}{ik\beta}
   \mathcal{F}{\bigl(
     \Lambda_t^N
     (
       \mathbf u\cdot\nabla u^2
       -\mathbf b\cdot\nabla b^2
     )\bigr)
   },
   \mathcal W_k(t,\xi)\Lambda_t^N\widehat b_k^1
 \right\rangle_{\neq}
\notag\\
&\quad
 +
 \operatorname{Re}
 \left\langle
   \frac{1}{ik\beta}
   \Lambda_t^N\widehat u_k^2,
   \mathcal W_k(t,\xi)
   \mathcal{F}{\bigl(
     \Lambda_t^N
     (
       \mathbf u\cdot\nabla b^1
       -\mathbf b\cdot\nabla u^1
     )\bigr)
   }
 \right\rangle_{\neq},
\end{align*}
and
\begin{align}\label{quanju26}
{\mathscr N}_4
:=&
 \operatorname{Re}
 \left\langle
   \frac{i\xi}{ik\beta}
   \widehat{\mathcal R}_i
   \widehat{\mathcal R}_{i'}
   \mathcal{F}{\bigl(
     \Lambda_t^N
     (u^iu^{i'}-b^ib^{i'})\bigr)
   },
   \mathcal W_k(t,\xi)\Lambda_t^N\widehat b_k^1
 \right\rangle_{\neq}
\notag\\
&\quad
 -
 \operatorname{Re}
 \left\langle
   \frac{1}{ik\beta}
   \Lambda_t^N\widehat b_k^2,
   ik\widehat{\mathcal R}_i
   \widehat{\mathcal R}_{i'}\,
   \mathcal W_k(t,\xi)
   \mathcal{F}{\bigl(
     \Lambda_t^N
     (u^iu^{i'}-b^ib^{i'})
   \bigr)}
 \right\rangle_{\neq}.
\end{align}

\medskip\noindent
\textbf{Step 3. Coercivity supplied by the multiplier.}

We first recall from \eqref{pre8} that
\begin{align}\label{quanju27}
&2\sigma\xi^2\mathcal W_k(t,\xi)+(-\partial_t+k\partial_\xi)\mathcal W_k(t,\xi)\nn\\
&\quad\ge \mathcal{A}_k(t,\xi)
\left[
\frac16\sigma\xi^2
+\frac{1}{6\pi}\kappa^{1/3}|k|^{2/3}
+\frac{1}{6D_\delta}\delta_k'(\xi)
+\mathcal{G}_k(t,\xi)
\right].
\end{align}

We apply \eqref{quanju27} with $\sigma=\nu$ to
$\widehat{\mathbf u}_k$. Multiplying by
$\Lambda_t^{2N}(k,\xi)|\widehat{\mathbf u}_k(\xi)|^2$, integrating
over $\xi$, and summing over $k\in\mathbb Z$, we obtain
\begin{align}\label{quanju28}
&\left\langle
    \left(
        2\nu\xi^2\mathcal{W}_k(t,\xi)
        +(-\partial_t+k\partial_\xi)\mathcal{W}_k(t,\xi)
    \right)
    \Lambda_t^N\widehat{\mathbf u}_k,
    \Lambda_t^N\widehat{\mathbf u}_k
\right\rangle
\nonumber\\
&\quad\geq
\sum_{k\in\mathbb Z}\int_{\mathbb R}
    \mathcal{A}_k(t,\xi)\Lambda_t^{2N}(k,\xi)
    \left[
\frac16\nu\xi^2
+\frac{1}{6\pi}\kappa^{1/3}|k|^{2/3}
+\frac{1}{6D_\delta}\delta_k'(\xi)
+\mathcal{G}_k(t,\xi)
\right]
    |\widehat{\mathbf u}_k|^2
\,d\xi.
\end{align}
Similarly, applying \eqref{quanju27} with
$\sigma=\mu$ to $\widehat{\mathbf b}_k$ yields
\begin{align}\label{quanju29}
&\left\langle
    \left(
        2\mu\xi^2\mathcal{W}_k(t,\xi)
        +(-\partial_t+k\partial_\xi)\mathcal{W}_k(t,\xi)
    \right)
    \Lambda_t^N\widehat{\mathbf b}_k,
    \Lambda_t^N\widehat{\mathbf b}_k
\right\rangle
\nonumber\\
&\quad\geq
\sum_{k\in\mathbb Z}\int_{\mathbb R}
    \mathcal{A}_k(t,\xi)\Lambda_t^{2N}(k,\xi)
    \left[
\frac16\mu\xi^2
+\frac{1}{6\pi}\kappa^{1/3}|k|^{2/3}
+\frac{1}{6D_\delta}\delta_k'(\xi)
+\mathcal{G}_k(t,\xi)
\right]
    |\widehat{\mathbf b}_k|^2
\,d\xi.
\end{align}
Adding \eqref{quanju28} and
\eqref{quanju29}, and using the definitions of
$\operatorname{Dis}_1$, $\operatorname{CK}_1$,
$\operatorname{CK}_2$, and $\operatorname{CK}_3$, gives
\begin{align}\label{quanju30}
&\left\langle
    \left(
        2\nu\xi^2\mathcal W_k(t,\xi)
        +(-\partial_t+k\partial_\xi)\mathcal W_k(t,\xi)
    \right)
    \Lambda_t^N\widehat{\mathbf u}_k,
    \Lambda_t^N\widehat{\mathbf u}_k
\right\rangle
\nn\\
&\qquad+
\left\langle
    \left(
        2\mu\xi^2\mathcal W_k(t,\xi)
        +(-\partial_t+k\partial_\xi)\mathcal W_k(t,\xi)
    \right)
    \Lambda_t^N\widehat{\mathbf b}_k,
    \Lambda_t^N\widehat{\mathbf b}_k
\right\rangle
\nonumber\\
&\quad\geq\frac{1}{6}
\operatorname{Dis}_1(t)
+\frac{1}{6\pi}\operatorname{CK}_1(t)
+\frac{1}{6D_\delta}\operatorname{CK}_2(t)
+\operatorname{CK}_3(t).
\end{align}
\medskip\noindent
\textbf{Step 4. Linear error estimates for ${\mathscr L}_1$, ${\mathscr L}_2$, and ${\mathscr L}_3$.}

We first estimate the unequal-dissipation error ${\mathscr L}_1$.
For $k\neq0$, incompressibility gives
\begin{align}
 |\widehat u_k^2|
 &\leq
 \frac{|k|}{(k^2+\xi^2)^{1/2}}
 |\widehat{\mathbf u}_k|,
&
 |\widehat u_k^1|
 &\leq
 \frac{|\xi|}{(k^2+\xi^2)^{1/2}}
 |\widehat{\mathbf u}_k|,
\label{quanju31}
\\
 |\widehat b_k^2|
 &\leq
 \frac{|k|}{(k^2+\xi^2)^{1/2}}
 |\widehat{\mathbf b}_k|,
&
 |\widehat b_k^1|
 &\leq
 \frac{|\xi|}{(k^2+\xi^2)^{1/2}}
 |\widehat{\mathbf b}_k|.
\label{quanju32}
\end{align}
Using \eqref{quanju31}--\eqref{quanju32}, we obtain
\begin{align*}
\left|
    \left\langle
        \frac{\xi^2}{ik}
        \Lambda_t^N\widehat b_k^2,
        \mathcal W_k(t,\xi)\Lambda_t^N\widehat u_k^1
    \right\rangle_{\neq}
\right|
&\leq
\sum_{k\neq0}\int_{\mathbb R}
    \mathcal W_k(t,\xi)\xi^2\Lambda_t^{2N}
    |\widehat{\mathbf b}_k|
    |\widehat{\mathbf u}_k|
\,d\xi,
\\
\left|
    \left\langle
        \frac{\xi^2}{ik}
        \Lambda_t^N\widehat u_k^2,
        \mathcal W_k(t,\xi)\Lambda_t^N\widehat b_k^1
    \right\rangle_{\neq}
\right|
&\leq
\sum_{k\neq0}\int_{\mathbb R}
    \mathcal W_k(t,\xi)\xi^2\Lambda_t^{2N}
    |\widehat{\mathbf u}_k|
    |\widehat{\mathbf b}_k|
\,d\xi.
\end{align*}
Consequently,
\begin{align*}
|{\mathscr L}_1|
&\leq
\frac{2(\mu+\nu)}{|\beta|}
\sum_{k\neq0}\int_{\mathbb R}
    \mathcal W_k(t,\xi)\xi^2\Lambda_t^{2N}
    |\widehat{\mathbf u}_k|
    |\widehat{\mathbf b}_k|
\,d\xi.
\end{align*}
By the weighted Young inequality,
\begin{align*}
2\sqrt{\mu\nu}\,
|\widehat{\mathbf u}_k|
|\widehat{\mathbf b}_k|
\leq
\nu|\widehat{\mathbf u}_k|^2
+
\mu|\widehat{\mathbf b}_k|^2,
\end{align*}
and hence
\begin{align}\label{quanju37}
|{\mathscr L}_1|
&\leq
\frac{\mu+\nu}{|\beta|\sqrt{\mu\nu}}
\sum_{k\neq0}\int_{\mathbb R}
    \mathcal W_k(t,\xi)\xi^2\Lambda_t^{2N}
    \left(
        \nu|\widehat{\mathbf u}_k|^2
        +
        \mu|\widehat{\mathbf b}_k|^2
    \right)
\,d\xi
\nn\\
&\leq
\frac{\mu+\nu}{|\beta|\sqrt{\mu\nu}}
\operatorname{Dis}_1.
\end{align}
We now bound the linear error ${\mathscr L}_2$ arising from the material derivative of the multiplier within the cross-energy functional:
\begin{align*}
{\mathscr L}_2
=&
\operatorname{Re}
\left\langle
    \frac{1}{ik\beta}\Lambda_t^N\widehat b_k^2,
    (\partial_t-k\partial_\xi)\mathcal W_k(t,\xi)
    \Lambda_t^N\widehat u_k^1
\right\rangle_{\neq}
-
\operatorname{Re}
\left\langle
    \frac{1}{ik\beta}\Lambda_t^N\widehat u_k^2,
    (\partial_t-k\partial_\xi)\mathcal W_k(t,\xi)
    \Lambda_t^N\widehat b_k^1
\right\rangle_{\neq}.
\end{align*}

Recall the factorization $\mathcal W_k(t,\xi) = \mathcal A_k(t,\xi) \bigl(\mathcal M_k^{(1)} + \mathcal M_k^{(2)} + \mathcal M_k^{(3)} + \frac23\bigr)$. Using the uniform bound \eqref{chengzijie}, we readily obtain the pointwise estimate for $k\neq 0$:
\begin{align}\label{transport-W-bound}
\bigl|
(\partial_t-k\partial_\xi)\mathcal W_k(t,\xi)
\bigr|
\leq
\mathcal A_k(t,\xi)
\left(
\frac{1}{32}\kappa^{1/3}
+
\frac{1}{3\pi}\kappa^{1/3}|k|^{2/3}
+
\frac{1}{6D_\delta}\delta_k'(\xi)
+
\mathcal G_k(t,\xi)
\right).
\end{align}

Furthermore, it follows from \eqref{quanju31} and \eqref{quanju32} that
\begin{align*}
\frac{1}{|k|}
\left(
|\widehat b_k^2|\,|\widehat u_k^1|
+
|\widehat u_k^2|\,|\widehat b_k^1|
\right)
\leq
\frac{2|\xi|}{k^2+\xi^2}
|\widehat{\mathbf u}_k|
|\widehat{\mathbf b}_k|
\leq
\frac{1}{2|k|}
\left(
|\widehat{\mathbf u}_k|^2
+
|\widehat{\mathbf b}_k|^2
\right).
\end{align*}
Substituting this pointwise  bound into the definition of ${\mathscr L}_2$ yields
\begin{align*}
|{\mathscr L}_2|
\leq
\frac{1}{2|\beta|}
\sum_{k\neq0}\int_{\mathbb R}
\frac{
\bigl|(\partial_t-k\partial_\xi)\mathcal W_k(t,\xi)\bigr|
}{|k|}
\Lambda_t^{2N}
\left(
|\widehat{\mathbf u}_k|^2
+
|\widehat{\mathbf b}_k|^2
\right)
\,d\xi.
\end{align*}

Inserting \eqref{transport-W-bound} and noting that $|k|\geq1$ on the non-zero modes, we obtain
 \begin{align*}
|{\mathscr L}_2|
\leq&
\left(
\frac{1}{64|\beta|}
+
\frac{1}{6\pi|\beta|}
\right)
\sum_{k\neq0}\int_{\mathbb R}
\mathcal A_k(t,\xi)\kappa^{1/3}|k|^{2/3}
\Lambda_t^{2N}
\left(
|\widehat{\mathbf u}_k|^2
+
|\widehat{\mathbf b}_k|^2
\right)
\,d\xi
\\
&+
\frac{1}{12D_\delta|\beta|}
\sum_{k\neq0}\int_{\mathbb R}
\mathcal A_k(t,\xi)\delta_k'(\xi)
\Lambda_t^{2N}
\left(
|\widehat{\mathbf u}_k|^2
+
|\widehat{\mathbf b}_k|^2
\right)
\,d\xi
\\
&+
\frac{1}{2|\beta|}
\sum_{k\neq0}\int_{\mathbb R}
\mathcal A_k(t,\xi)\mathcal G_k(t,\xi)
\Lambda_t^{2N}
\left(
|\widehat{\mathbf u}_k|^2
+
|\widehat{\mathbf b}_k|^2
\right)
\,d\xi.
\end{align*}
Using $1/64\leq1/6$, we have
\begin{align*}
\frac{1}{64|\beta|}
+
\frac{1}{6\pi|\beta|}
\leq
\frac{\pi+1}{6\pi|\beta|}.
\end{align*}
Consequently,
\begin{align}\label{quanju38}
|{\mathscr L}_2(t)|
\leq
\frac{\pi+1}{6\pi|\beta|}
\operatorname{CK}_1(t)
+
\frac{1}{12D_\delta|\beta|}
\operatorname{CK}_2(t)
+
\frac{1}{2|\beta|}
\operatorname{CK}_3(t).
\end{align}
Finally, we consider
\begin{align*}
    {\mathscr L}_3
    =
    2\operatorname{Re}
    \left\langle
        \frac{1}{ik\beta}\Lambda_t^N\widehat b_k^2,
        \frac{k^2-\xi^2}{k^2+\xi^2}
        \mathcal{W}_k(t,\xi)\Lambda_t^N\widehat u_k^2
    \right\rangle ,
\end{align*}
where the sum is again restricted to $k\neq0$. Since $\left|\frac{k^2-\xi^2}{k^2+\xi^2}\right|\leq1,$
the component estimates \eqref{quanju31}, \eqref{quanju32} and
Cauchy--Schwarz imply
\begin{align}\label{quanju39a}
\left|\mathscr L_3\right|
\leq&
\frac1{|\beta|}
\sum_{k\neq0}\int_{\mathbb R}
\mathcal W_k(t,\xi)\Lambda_t^{2N}(k,\xi)
\left(
|\widehat u_k^2(\xi)|^2
+
|\widehat b_k^2(\xi)|^2
\right)\,d\xi
\notag\\
=&
\frac1{|\beta|}
\sum_{k\neq0}\int_{\mathbb R}
(1-\chi_k)\mathcal W_k(t,\xi)\Lambda_t^{2N}
\left(
|\widehat u_k^2|^2+|\widehat b_k^2|^2
\right)\,d\xi\nn\\
&+\frac1{|\beta|}
\sum_{k\neq0}\int_{\mathbb R}
\chi_k\mathcal W_k(t,\xi)\Lambda_t^{2N}
\left(
|\widehat u_k^2|^2+|\widehat b_k^2|^2
\right)\,d\xi.
\end{align}
On the support of $1-\chi_k$, we have
$
\kappa^{1/2}|k|\geq1,
$
and consequently
$
\kappa^{1/3}|k|^{2/3}\geq1.
$
Since $\mathcal W_k(t,\xi)\leq\mathcal A_k(t,\xi)$, it follows that
\begin{align}\label{gaopin}
&\frac1{|\beta|}
\sum_{k\neq0}\int_{\mathbb R}
(1-\chi_k)\mathcal W_k(t,\xi)\Lambda_t^{2N}
\left(
|\widehat u_k^2|^2+|\widehat b_k^2|^2
\right)\,d\xi\nn\\
&\quad\leq
\frac1{|\beta|}
\sum_{k\neq0}\int_{\mathbb R}
\mathcal A_k(t,\xi)\Lambda_t^{2N}
\left(
|\widehat u_k^2|^2+|\widehat b_k^2|^2
\right)\,d\xi
\nn\\
&\quad\leq
\frac1{|\beta|}
\kappa^{1/3}
\sum_{k\neq0}\int_{\mathbb R}
|k|^{2/3}\mathcal A_k(t,\xi)\Lambda_t^{2N}
\left(
|\widehat{\mathbf u}_k|^2
+
|\widehat{\mathbf b}_k|^2
\right)\,d\xi
\nn\\
&\quad=
\frac1{|\beta|}\operatorname{CK}_1(t).
\end{align}

It remains to estimate the last term in \eqref{quanju39a}. For $k\neq0$, the
divergence-free conditions give
\[
k\widehat u_k^1+\xi\widehat u_k^2=0,
\qquad
k\widehat b_k^1+\xi\widehat b_k^2=0.
\]
Therefore,
\[
|\widehat u_k^2|^2
=
\frac{k^2}{k^2+\xi^2}
|\widehat{\mathbf u}_k|^2
=
\left\langle\frac{\xi}{k}\right\rangle^{-2}
|\widehat{\mathbf u}_k|^2,
\]
and similarly
\[
|\widehat b_k^2|^2
=
\left\langle\frac{\xi}{k}\right\rangle^{-2}
|\widehat{\mathbf b}_k|^2.
\]
The elementary inequality
\[
\bigl[\log(1+r)\bigr]^2\leq r,
\qquad r\geq1,
\]
implies
\[
\frac1{r^2}
\leq
\frac1{r[\log(1+r)]^2},
\qquad r\geq1.
\]
Taking
$
r=\left\langle\frac{\xi}{k}\right\rangle\geq1,
$
we obtain
\[
|\widehat u_k^2|^2
\leq
\delta'\left(\frac{\xi}{k}\right)
|\widehat{\mathbf u}_k|^2,
\qquad
|\widehat b_k^2|^2
\leq
\delta'\left(\frac{\xi}{k}\right)
|\widehat{\mathbf b}_k|^2.
\]
Using again $\mathcal W_k(t,\xi)\leq\mathcal A_k(t,\xi)$, we conclude that
\begin{align}\label{dipin}
\frac1{|\beta|}
\sum_{k\neq0}\int_{\mathbb R}
\chi_k\mathcal W_k(t,\xi)\Lambda_t^{2N}
\left(
|\widehat u_k^2|^2+|\widehat b_k^2|^2
\right)\,d\xi
&\leq
\frac1{|\beta|}
\sum_{k\neq0}\int_{\mathbb R}
\chi_k\delta'\left(\frac{\xi}{k}\right)
\mathcal A_k(t,\xi)\Lambda_t^{2N}
\left(
|\widehat{\mathbf u}_k|^2
+
|\widehat{\mathbf b}_k|^2
\right)\,d\xi
\nn\\
&=
\frac1{|\beta|}\operatorname{CK}_2(t).
\end{align}
Combining the estimates \eqref{gaopin} and \eqref{dipin} gives
\begin{equation}\label{quanju39}
\left|\mathscr L_3(t)\right|
\leq
\frac1{|\beta|}
\left(
\operatorname{CK}_1(t)+\operatorname{CK}_2(t)
\right).
\end{equation}

Inserting \eqref{quanju37}, \eqref{quanju38}, and
\eqref{quanju39} into the exact energy identity and using
\eqref{quanju30}, we obtain
\begin{equation}\label{quanju40}
    \frac{d}{dt}E(t)
    +c_D\operatorname{Dis}_1
    +c_1\operatorname{CK}_1
    +c_2\operatorname{CK}_2
    +c_3\operatorname{CK}_3
    \leq
    \sum_{j=1}^4|{\mathscr N}_j|.
\end{equation}
This completes the absorption of all the linear error terms.

\medskip\noindent
\textbf{Step 5. Nonlinear terms ${\mathscr N}_2$,
${\mathscr N}_3$, and ${\mathscr N}_4$ from the cross energy.}

 All
terms in ${\mathscr N}_2$,
${\mathscr N}_3$, and ${\mathscr N}_4$ have
nonzero output frequency $k\neq0$. Thus $\partial_x^{-1}$ is
uniformly bounded on their Fourier support.

We begin with ${\mathscr N}_2$ and decompose it into
\begin{align*}
{\mathscr N}_2
=
\sum_{j=1}^{4}{\mathscr N}_{2,j},
\end{align*}
where
\begin{align*}
{\mathscr N}_{2,1}
&:=
-\operatorname{Re}
\left\langle
\frac{1}{ik\beta}
\Lambda_t^N\widehat{\mathbf u\cdot\nabla b^2}_k,
\mathcal W_k(t,\xi)\Lambda_t^N\widehat u_k^1
\right\rangle_{\neq},\nn\\
{\mathscr N}_{2,2}
&:=
\operatorname{Re}
\left\langle
\frac{1}{ik\beta}
\Lambda_t^N\widehat{\mathbf b\cdot\nabla u^2}_k,
\mathcal W_k(t,\xi)\Lambda_t^N\widehat u_k^1
\right\rangle_{\neq},
\\
{\mathscr N}_{2,3}
&:=
-\operatorname{Re}
\left\langle
\frac{1}{ik\beta}
\Lambda_t^N\widehat b_k^2,
\mathcal W_k(t,\xi)\Lambda_t^N
\widehat{\mathbf u\cdot\nabla u^1}_k
\right\rangle_{\neq},\nn\\
{\mathscr N}_{2,4}
&:=
\operatorname{Re}
\left\langle
\frac{1}{ik\beta}
\Lambda_t^N\widehat b_k^2,
\mathcal W_k(t,\xi)\Lambda_t^N
\widehat{\mathbf b\cdot\nabla b^1}_k
\right\rangle_{\neq}.
\end{align*}

We first treat the terms in which the derivative falls on a vertical
component.  For $(\mathbf v,\mathbf w)=(\mathbf u,\mathbf b)$ or
$(\mathbf b,\mathbf u)$, incompressibility of $\mathbf w$ gives
\begin{align*}
\begin{aligned}
&\left[
(k-\ell)\widehat v_\ell^1(\eta)
+
(\xi-\eta)\widehat v_\ell^2(\eta)
\right]
\widehat w_{k-\ell}^2(\xi-\eta)
\\
&\quad
=
(k-\ell)
\left[
\widehat v_\ell^1(\eta)\widehat w_{k-\ell}^2(\xi-\eta)
-
\widehat v_\ell^2(\eta)\widehat w_{k-\ell}^1(\xi-\eta)
\right].
\end{aligned}
\end{align*}
Consequently,
\begin{align*}
\begin{aligned}
|{\mathscr N}_{2,1}|
&\lesssim
\sum_{\substack{k\neq0\\ \ell\in\mathbb Z}}
\int_{\mathbb R^2}
\frac{|k-\ell|}{|k|}
\mathcal W_k(t,\xi)\Lambda_t^{2N}(k,\xi)
|\widehat{\mathbf u}_\ell(\eta)|
|\widehat{\mathbf b}_{k-\ell}(\xi-\eta)|
|\widehat{\mathbf u}_k(\xi)|
\,d\eta\,d\xi,                                      \\
|{\mathscr N}_{2,2}|
&\lesssim
\sum_{\substack{k\neq0\\ \ell\in\mathbb Z}}
\int_{\mathbb R^2}
\frac{|k-\ell|}{|k|}
\mathcal W_k(t,\xi)\Lambda_t^{2N}(k,\xi)
|\widehat{\mathbf b}_\ell(\eta)|
|\widehat{\mathbf u}_{k-\ell}(\xi-\eta)|
|\widehat{\mathbf u}_k(\xi)|
\,d\eta\,d\xi .
\end{aligned}
\end{align*}
The mode $k-\ell=0$ vanishes since
$\widehat b_0^2=\widehat u_0^2=0$.  We divide the remaining
frequencies into
\begin{align*}
\frac12|k-\ell|\le |k|\le2|k-\ell|,
\qquad
2|k-\ell|\le |k|,
\qquad
2|k|\le |k-\ell|.
\end{align*}
In the first two regions, $|k-\ell|/|k|\lesssim1$; in the last one,
$|\ell|\approx|k-\ell|$, so the horizontal derivative is assigned to
the comparable input frequency.  Using $\mathcal W_k(t,\xi)\approx\mathcal A_k(t,\xi)$,
the submultiplicativity of $\mathcal A_k(t,\xi)$, and Lemma~\ref{leyoung}, we obtain
\begin{align*}
|{\mathscr N}_{2,1}|
\lesssim
X_{\mathbf u}X_{\mathbf b,\neq}X_{\mathbf u,\neq}
=\mathcal Z_1,
\qquad
|{\mathscr N}_{2,2}|
\lesssim
X_{\mathbf b}X_{\mathbf u,\neq}^2
=\mathcal Z_3.
\end{align*}

For the remaining two terms, incompressibility of the advecting
field yields, for $\mathbf v=\mathbf u$ or $\mathbf b$,
\begin{align*}
(k-\ell)\widehat v_\ell^1(\eta)
+
(\xi-\eta)\widehat v_\ell^2(\eta)
=
k\widehat v_\ell^1(\eta)
+
\xi\widehat v_\ell^2(\eta).
\end{align*}
After division by $k$, the factor $\xi/k$ is transferred to the
output magnetic component and removed by
\begin{align*}
\frac{\xi}{k}\widehat b_k^2(\xi)=-\widehat b_k^1(\xi).
\end{align*}
It follows that
\begin{align*}
|{\mathscr N}_{2,3}|
&\lesssim
\sum_{\substack{k\neq0\\ \ell\in\mathbb Z}}
\int_{\mathbb R^2}
\mathcal W_k(t,\xi)\Lambda_t^{2N}(k,\xi)
|\widehat{\mathbf u}_\ell(\eta)|
|\widehat{\mathbf u}_{k-\ell}(\xi-\eta)|
|\widehat{\mathbf b}_k(\xi)|
\,d\eta\,d\xi,                                      \\
|{\mathscr N}_{2,4}|
&\lesssim
\sum_{\substack{k\neq0\\ \ell\in\mathbb Z}}
\int_{\mathbb R^2}
\mathcal W_k(t,\xi)\Lambda_t^{2N}(k,\xi)
|\widehat{\mathbf b}_\ell(\eta)|
|\widehat{\mathbf b}_{k-\ell}(\xi-\eta)|
|\widehat{\mathbf b}_k(\xi)|
\,d\eta\,d\xi .
\end{align*}
Since $k\neq0$, at least one of the two input horizontal modes is
nonzero.  Lemma~\ref{leyoung} therefore gives
\begin{align*}
|{\mathscr N}_{2,3}|
\lesssim
X_{\mathbf u}X_{\mathbf u,\neq}X_{\mathbf b,\neq}
=\mathcal Z_1,
\qquad
|{\mathscr N}_{2,4}|
\lesssim
X_{\mathbf b}X_{\mathbf b,\neq}^2
=\mathcal Z_2.
\end{align*}
Combining the preceding estimates, we conclude that
\begin{align}\label{quanju47}
|{\mathscr N}_2|
\lesssim
\mathcal Z_1+\mathcal Z_2+\mathcal Z_3.
\end{align}

We next estimate ${\mathscr N}_3$.  Decompose
\begin{align*}
{\mathscr N}_3
=
\sum_{j=1}^4{\mathscr N}_{3,j}
\end{align*}
according to the nonlinearities
$\mathbf u\cdot\nabla u^2$, $-\mathbf b\cdot\nabla b^2$,
$\mathbf u\cdot\nabla b^1$, and $-\mathbf b\cdot\nabla u^1$.

For the first two terms, let $(\mathbf v,\mathbf w)$ be respectively
$(\mathbf u,\mathbf u)$ and $(\mathbf b,\mathbf b)$.  By
incompressibility of $\mathbf w$,
\begin{align*}
\begin{aligned}
&\left[
(k-\ell)\widehat v_\ell^1(\eta)
+
(\xi-\eta)\widehat v_\ell^2(\eta)
\right]
\widehat w_{k-\ell}^2(\xi-\eta)
\\
&\quad
=
(k-\ell)
\left[
\widehat v_\ell^1(\eta)\widehat w_{k-\ell}^2(\xi-\eta)
-
\widehat v_\ell^2(\eta)\widehat w_{k-\ell}^1(\xi-\eta)
\right].
\end{aligned}
\end{align*}
Hence
\begin{align*}
\begin{aligned}
|{\mathscr N}_{3,1}|
&\lesssim
\sum_{\substack{k\neq0\\ \ell\in\mathbb Z}}
\int_{\mathbb R^2}
\frac{|k-\ell|}{|k|}
\mathcal W_k(t,\xi)\Lambda_t^{2N}(k,\xi)
|\widehat{\mathbf u}_\ell(\eta)|
|\widehat{\mathbf u}_{k-\ell}(\xi-\eta)|
|\widehat{\mathbf b}_k(\xi)|
\,d\eta\,d\xi,
\\
|{\mathscr N}_{3,2}|
&\lesssim
\sum_{\substack{k\neq0\\ \ell\in\mathbb Z}}
\int_{\mathbb R^2}
\frac{|k-\ell|}{|k|}
\mathcal W_k(t,\xi)\Lambda_t^{2N}(k,\xi)
|\widehat{\mathbf b}_\ell(\eta)|
|\widehat{\mathbf b}_{k-\ell}(\xi-\eta)|
|\widehat{\mathbf b}_k(\xi)|
\,d\eta\,d\xi.
\end{aligned}
\end{align*}
The mode $k-\ell=0$ vanishes because
$\widehat u_0^2=\widehat b_0^2=0$.  Splitting into
\begin{align*}
\frac12|k-\ell|\le |k|\le2|k-\ell|,
\qquad
2|k-\ell|\le |k|,
\qquad
2|k|\le |k-\ell|,
\end{align*}
the ratio $|k-\ell|/|k|$ is uniformly bounded in the first two regions, while
$|\ell|\approx|k-\ell|$ in the last one.  Lemma~\ref{leyoung} therefore gives
\begin{align*}
|{\mathscr N}_{3,1}|
\lesssim
X_{\mathbf u}X_{\mathbf u,\neq}X_{\mathbf b,\neq}
=\mathcal Z_1,
\qquad
|{\mathscr N}_{3,2}|
\lesssim
X_{\mathbf b}X_{\mathbf b,\neq}^2
=\mathcal Z_2.
\end{align*}

For the last two terms, let $(\mathbf v,\mathbf w)$ be respectively
$(\mathbf u,\mathbf b)$ and $(\mathbf b,\mathbf u)$.  Incompressibility
of the advecting field gives
\begin{align*}
(k-\ell)\widehat v_\ell^1(\eta)
+
(\xi-\eta)\widehat v_\ell^2(\eta)
=
k\widehat v_\ell^1(\eta)+\xi\widehat v_\ell^2(\eta).
\end{align*}
After division by $k$, the factor $\xi/k$ is transferred to the
output velocity component and removed by
\begin{align*}
\frac{\xi}{k}\widehat u_k^2(\xi)=-\widehat u_k^1(\xi).
\end{align*}
Thus
\begin{align*}
\begin{aligned}
|{\mathscr N}_{3,3}|
&\lesssim
\sum_{\substack{k\neq0\\ \ell\in\mathbb Z}}
\int_{\mathbb R^2}
\mathcal W_k(t,\xi)\Lambda_t^{2N}(k,\xi)
|\widehat{\mathbf u}_\ell(\eta)|
|\widehat{\mathbf b}_{k-\ell}(\xi-\eta)|
|\widehat{\mathbf u}_k(\xi)|
\,d\eta\,d\xi,
\\
|{\mathscr N}_{3,4}|
&\lesssim
\sum_{\substack{k\neq0\\ \ell\in\mathbb Z}}
\int_{\mathbb R^2}
\mathcal W_k(t,\xi)\Lambda_t^{2N}(k,\xi)
|\widehat{\mathbf b}_\ell(\eta)|
|\widehat{\mathbf u}_{k-\ell}(\xi-\eta)|
|\widehat{\mathbf u}_k(\xi)|
\,d\eta\,d\xi.
\end{aligned}
\end{align*}
Since $k\neq0$, at least one input mode is nonzero.  Separating
$\ell=0$, $k-\ell=0$, and $\ell(k-\ell)\neq0$, and applying
Lemma~\ref{leyoung}, we obtain
\begin{align*}
|{\mathscr N}_{3,3}|
+
|{\mathscr N}_{3,4}|
\lesssim
X_{\mathbf b,\neq}X_{\mathbf u,\neq}X_{\mathbf u}
+
X_{\mathbf u,\neq}^2X_{\mathbf b}
=
\mathcal Z_1+\mathcal Z_3.
\end{align*}
Combining the four estimates yields
\begin{align}\label{quanju48}
 \bigl|{\mathscr N}_3\bigr|
 \lesssim
 \mathcal{Z}_1+\mathcal{Z}_2+\mathcal{Z}_3.
\end{align}
We finally estimate the pressure contribution
${\mathscr N}_4$.  Since the Riesz transforms are
zero-order Fourier multipliers, the second term satisfies
\begin{align}
\left|
\operatorname{Re}
\left\langle
\frac{1}{ik\beta}\Lambda_t^N\widehat b_k^2,
ik\widehat{\mathcal R_i}\widehat{\mathcal R_{i'}}
\mathcal W_k(t,\xi)\Lambda_t^N
\mathcal{F}{(u^iu^{i'}-b^ib^{i'})}_k
\right\rangle_{\neq}
\right|
\lesssim&
\left(
X_{\mathbf b,\neq}X_{\mathbf u,\neq}X_{\mathbf u}
+
X_{\mathbf b,\neq}^2X_{\mathbf b}
\right)\nn\\
=&(\mathcal Z_1+\mathcal Z_2),
\end{align}
where we used that the output mode is nonzero, and hence at least one
input mode in each quadratic product is nonzero.

For the first term, if $(i,i')\neq(2,2)$, its Fourier symbol is
uniformly bounded, since
\begin{align*}
\left|
\frac{\xi}{k}
\widehat{\mathcal R_i}(k,\xi)
\widehat{\mathcal R_{i'}}(k,\xi)
\right|
\le1,
\qquad k\neq0,\quad (i,i')\neq(2,2).
\end{align*}
Thus Lemma~\ref{leyoung} gives the same bound
\begin{align*}
\left|
{\mathscr N}_{4,1}
[\mathcal R_i\mathcal R_{i'}]
\right|
\lesssim
\frac1{|\beta|}(\mathcal Z_1+\mathcal Z_2),
\qquad (i,i')\neq(2,2).
\end{align*}

It remains to consider the $\mathcal R_2\mathcal R_2$ component.
Using
\begin{align*}
\left|\widehat{\mathcal R_2}^2(k,\xi)\right|
=
\frac{\xi^2}{k^2+\xi^2}\le1
\quad\text{and}\quad
\xi=\eta+(\xi-\eta),
\end{align*}
we distribute the remaining factor $\xi/k$ between the two inputs.
For the velocity product, incompressibility yields
\begin{align*}
\eta\widehat u_\ell^2(\eta)
=-\ell\widehat u_\ell^1(\eta),
\qquad
(\xi-\eta)\widehat u_{k-\ell}^2(\xi-\eta)
=-(k-\ell)\widehat u_{k-\ell}^1(\xi-\eta),
\end{align*}
and the analogous identities hold for $\mathbf b$.  The resulting
symbols contain only the ratios $\ell/k$ and $(k-\ell)/k$.  We split
the frequency space into
\begin{align*}
\frac12|k-\ell|\le |k|\le2|k-\ell|,
\qquad
2|k-\ell|\le |k|,
\qquad
2|k|\le |k-\ell|.
\end{align*}
The ratios are uniformly bounded in the first two regions; in the
last region, $|\ell|\approx|k-\ell|$, and the remaining horizontal
derivative is assigned to the comparable input frequency.  Lemma~\ref{leyoung}
therefore gives
\begin{align*}
\left|
{\mathscr N}_{4,1}
[\mathcal R_2\mathcal R_2]
\right|
\lesssim
\frac1{|\beta|}(\mathcal Z_1+\mathcal Z_2).
\end{align*}
Combining the preceding estimates, we conclude that
\begin{align}\label{quanju52}
 \bigl|{\mathscr N}_4\bigr|
 \lesssim
 \mathcal{Z}_1+\mathcal{Z}_2.
\end{align}

Collecting \eqref{quanju47}, \eqref{quanju48}, and
\eqref{quanju52}, we obtain
\begin{align*}
 \bigl|{\mathscr N}_2\bigr|
 +\bigl|{\mathscr N}_3\bigr|
 +\bigl|{\mathscr N}_4\bigr|
 \leq
 C\bigl(\mathcal{Z}_1+\mathcal{Z}_2+\mathcal{Z}_3\bigr),
\end{align*}
where $
 C=C(N, \beta, \vartheta)$
is independent of $t,\mu,\nu$, and $\kappa$.

\medskip

\noindent
\textbf{Step 6. Estimate of the principal nonlinear term
${\mathscr N}_1$.}

 For convenience, we write
\begin{align*}
		{\mathscr N}_1
 =&  {\mathscr N}_{1,1} + {\mathscr N}_{1,2} + {\mathscr N}_{1,3} + {\mathscr N}_{1,4},
		\end{align*}
with
		\begin{align*}
		&{\mathscr N}_{1,1} = -2 \operatorname{Re}\left\langle \Lambda^N_t\left( \u \cdot \nabla \u\right) ,  \mathcal{W}  \Lambda_t^N \u \right\rangle,\quad {\mathscr N}_{1,2} = 2 \operatorname{Re}\left\langle \Lambda^N_t \left( \mathbf{b} \cdot \nabla \mathbf{b} \right)   ,  \mathcal{W}  \Lambda_t^N  \u \right\rangle,\\
		&{\mathscr N}_{1,3} = -2 \operatorname{Re}\left\langle \Lambda^N_t\left( \u \cdot \nabla \mathbf{b} \right) ,  \mathcal{W}  \Lambda_t^N \mathbf{b} \right\rangle,\quad {\mathscr N}_{1,4}= 2 \operatorname{Re}\left\langle \Lambda^N_t \left( \mathbf{b} \cdot \nabla \u \right)   ,  \mathcal{W}  \Lambda_t^N  \mathbf{b} \right\rangle.
		\end{align*}

We first consider ${\mathscr N}_{1,1}$. Using
$\div\mathbf u=0$ and symmetrizing with respect to
$(k,\xi)$ and $(\ell,\eta)$, we obtain
\begin{align*}
{\mathscr N}_{1,1}
=&-\operatorname{Re}\sum_{k,\ell\in\mathbb Z}\int_{\mathbb R^2}
i\Bigl[
\ell\mathcal W_k(t,\xi)\Lambda_t^{2N}(k,\xi)
-k\mathcal W_\ell(t,\eta)\Lambda_t^{2N}(\ell,\eta)
\Bigr]
\widehat{u^1_{k-\ell}}(\xi-\eta)
\cdot
\widehat{\mathbf u}_\ell(\eta)
\cdot\overline{\widehat{\mathbf u}_k(\xi)}
\,d\eta\,d\xi
\notag\\
&-\operatorname{Re}\sum_{k,\ell\in\mathbb Z}\int_{\mathbb R^2}
i\Bigl[
\eta\mathcal W_k(t,\xi)\Lambda_t^{2N}(k,\xi)
-\xi\mathcal W_\ell(t,\eta)\Lambda_t^{2N}(\ell,\eta)
\Bigr]
\widehat{u^2_{k-\ell}}(\xi-\eta)
\cdot
\widehat{\mathbf u}_\ell(\eta)
\cdot\overline{\widehat{\mathbf u}_k(\xi)}
\,d\eta\,d\xi
\notag\\
=:&{\mathscr N}_{1,1,1}
+{\mathscr N}_{1,1,2}.
\end{align*}
By the commutator estimate and Lemma~\ref{leyoung}, we obtain
\begin{align}\label{quanju55}
\bigl|{\mathscr N}_{1,1,1}\bigr|
\lesssim&
\left\|
  \sqrt{\mathcal{W}}\Lambda_t^N\mathbf u_{\neq}
\right\|_{L^2}^2
\left\|
  \sqrt{\mathcal{W}}\Lambda_t^N\mathbf u
\right\|_{L^2}
\notag\\
&+
\kappa^{-1/6}
\left\|
  |\partial_x|^{1/3}
  \sqrt{\mathcal{W}}\Lambda_t^N\mathbf u_{\neq}
\right\|_{L^2}^2
\left\|
  \sqrt{\mathcal{W}}\Lambda_t^N\mathbf u
\right\|_{L^2}
\notag\\
\lesssim&
\mathcal{Z}_5+\mathcal{Z}_7.
\end{align}

We next estimate ${\mathscr N}_{1,1,2}$ by separating two
frequency regimes.

\medskip
\noindent
\textbf{Case 1.}
Suppose that
\begin{align*}
\Lambda_t(k-\ell,\xi-\eta)
\leq\frac12\Bigl(
\Lambda_t(k,\xi)+\Lambda_t(\ell,\eta)
\Bigr).
\end{align*}
The triangle inequality then gives
\begin{align*}
\Lambda_t(k,\xi)\approx\Lambda_t(\ell,\eta).
\end{align*}
In this region, we use the identity
\begin{align}\label{quanju56}
&\eta\mathcal W_k(t,\xi)\Lambda_t^{2N}(k,\xi)
-\xi\mathcal W_\ell(t,\eta)\Lambda_t^{2N}(\ell,\eta)\nn\\
&\quad=
\eta\bigl(\mathcal W_k(t,\xi)-\mathcal W_\ell(t,\eta)\bigr)
\Lambda_t^{2N}(k,\xi)+(\eta-\xi)\mathcal W_\ell(t,\eta)\Lambda_t^{2N}(k,\xi)
\nn\\
&\qquad
+\xi\mathcal W_\ell(t,\eta)
\Bigl(
\Lambda_t^{2N}(k,\xi)-\Lambda_t^{2N}(\ell,\eta)
\Bigr).
\end{align}
Accordingly, we write
\begin{align*}
{\mathscr N}_{1,1,2}
=
{\mathscr N}_{1,1,2,M}
+{\mathscr N}_{1,1,2,y}
+{\mathscr N}_{1,1,2,x},
\end{align*}
where the three terms correspond, respectively, to the multiplier,
derivative, and Sobolev commutators in
\eqref{quanju56}.

For the derivative commutator, incompressibility gives
\begin{align*}
(k-\ell)\widehat u^1_{k-\ell}(\xi-\eta)
+(\xi-\eta)\widehat u^2_{k-\ell}(\xi-\eta)=0.
\end{align*}
Hence Lemma~\ref{leyoung} implies
\begin{equation}\label{quanju57}
\bigl|{\mathscr N}_{1,1,2,y}\bigr|
\lesssim
X_{\mathbf u}X_{\mathbf u,\neq}^2
=\mathcal{Z}_5.
\end{equation}
For the Sobolev commutator, the comparability of the output frequencies implies
\begin{align*}
    |\Lambda_t^{2N}(k,\xi) - \Lambda_t^{2N}(\ell,\eta)| \lesssim \Lambda_t^N(\ell,\eta)\Lambda_t(k-\ell,\xi-\eta)\Lambda_t^{N-1}(k,\xi).
\end{align*}
Using incompressibility and the following geometric bound $$\frac{|k|}{(k^2+\xi^2)^{1/2}} \lesssim \langle t \rangle^{-1} \Lambda_t(k,\xi),$$ the Cauchy-Schwarz inequality gives:
\begin{align}\label{quanju58}
\bigl\|\sqrt{\mathcal{W}_k(t,\xi)}\Lambda_t\widehat{u^2}\bigr\|_{\ell_k^1L_\xi^1}
&\lesssim
\sum_{k\neq0}\int_{\mathbb R}
\frac{|k|\Lambda_t(k,\xi)}{(k^2+\xi^2)^{1/2}}
\sqrt{\mathcal W_k(t,\xi)}
\bigl|\widehat{\mathbf u}_k(\xi)\bigr|
\,d\xi \notag\\
&\lesssim
\langle t\rangle^{-1}
\sum_{k\neq0}\int_{\mathbb R}
\Lambda_t^2(k,\xi)
\sqrt{\mathcal W_k(t,\xi)}
\bigl|\widehat{\mathbf u}_k(\xi)\bigr|
\,d\xi \notag\\
&\lesssim
\langle t\rangle^{-1}X_{\mathbf u,\neq},
\qquad \text{for } N\geq4.
\end{align}
It follows from Lemma \ref{leyoung} that
\begin{align*}
    \bigl|{\mathscr N}_{1,1,2,x}\bigr| \lesssim X_{\mathbf u}X_{\mathbf u,\neq}^2 = \mathcal{Z}_5.
\end{align*}

By using the boundedness of the multipliers and inequality \eqref{quanju58}, we obtain  
\begin{equation}\label{quanju60}
\bigl|{\mathscr N}_{1,1,2,M}\bigr|
\lesssim
 \mathcal{Z}_4 ,
\end{equation}
after allocating the dissipative contribution to
$\operatorname{Dis}_1$. Combining
\eqref{quanju57}--\eqref{quanju60}, we obtain
\begin{equation}\label{quanju61}
\bigl|{\mathscr N}_{1,1,2}\bigr|
\lesssim
 \mathcal{Z}_4 +\mathcal{Z}_5
\qquad\text{in Case~1}.
\end{equation}

\medskip
\noindent
\textbf{Case 2.}
Suppose that
\begin{equation}\label{quanju62}
\Lambda_t(k-\ell,\xi-\eta)
\geq
\frac12\Bigl(
\Lambda_t(k,\xi)+\Lambda_t(\ell,\eta)
\Bigr).
\end{equation}
Set
\begin{align*}
m=k-\ell,\qquad
\zeta=\xi-\eta,\qquad
\chi_j=\chi(\kappa^{1/2}|j|).
\end{align*}

In this region, the advecting mode $(m,\zeta)$ carries the dominant
sheared frequency.
Under this condition, we split ${\mathscr{N}}_{1,1,2}$ into two components, the triangle inequality $$ |\eta| + |\xi| \le \Lambda_t(k,\xi) + \Lambda_t(\ell,\eta) +\left( |k|+|\ell|\right)t$$ gives
\begin{align*}
\bigl|{\mathscr N}_{1,1,2}\bigr|
\lesssim R_1+R_2,
\end{align*}
where
\begin{align}\label{R1-definition}
R_1
:=&
\sum_{k,\ell\in\mathbb Z}
\int_{\mathbb R^2}
{\bf1}_{\eqref{quanju62}}
\Bigl[
\Lambda_t(\ell,\eta)
\mathcal W_k(t,\xi)\Lambda_t^{2N}(k,\xi)
+
\Lambda_t(k,\xi)
\mathcal W_\ell(t,\eta)\Lambda_t^{2N}(\ell,\eta)
\Bigr]
\bigl|\widehat{u^2_{k-\ell}}(\xi-\eta)\bigr|
\notag\\[-1mm]
&\hspace{27mm}\times
\bigl|\widehat{\mathbf u}_\ell(\eta)\bigr|
\bigl|\widehat{\mathbf u}_k(\xi)\bigr|
\,d\eta\,d\xi,
\end{align}
\begin{align}
R_2
:=&
t\sum_{k,\ell\in\mathbb Z}
\int_{\mathbb R^2}
{\bf1}_{\eqref{quanju62}}
\Bigl[
|\ell|
\mathcal W_k(t,\xi)\Lambda_t^{2N}(k,\xi)
+
|k|
\mathcal W_\ell(t,\eta)\Lambda_t^{2N}(\ell,\eta)
\Bigr]
\bigl|\widehat{u^2_{k-\ell}}(\xi-\eta)\bigr|
\notag\\[-1mm]
&\hspace{27mm}\times
\bigl|\widehat{\mathbf u}_\ell(\eta)\bigr|
\bigl|\widehat{\mathbf u}_k(\xi)\bigr|
\,d\eta\,d\xi.\nn
\end{align}
On the support of \eqref{quanju62}, the Sobolev weights in
\eqref{R1-definition} can be distributed among the three factors.
Consequently, Lemma~\ref{leyoung} gives
\begin{equation}\label{quanju63}
R_1
\lesssim
X_{\mathbf u,\neq}^2X_{\mathbf u}
=
\mathcal Z_5.
\end{equation}

For $ R_2$, it suffices to estimate its first component, which we also denote by $ R_2$ with a slight abuse of notation. Noting that
$
t \, e^{-\frac{1}{64} \kappa^{\frac{1}{3}} t} \lesssim \kappa^{-\frac{1}{3}},
$
we then obtain
\begin{align*}
R_2
&\lesssim
\kappa^{-\frac{1}{3}}
\Bigg|
\sum_{\substack{k,\ell\in\mathbb Z\\ k\neq \ell}}
\int_{\mathbb R^2}
\sqrt{\mathcal W_{k-\ell}}\,
\Lambda_t^N(k-\ell,\xi-\eta)
\widehat{u^2_{k-\ell}}(\xi-\eta)
|\ell|\sqrt{\mathcal W_{\ell}}\,
\widehat{\mathbf u}_{\ell}(\eta)
\cdot
\sqrt{\mathcal W_k}\,
\Lambda_t^N(k,\xi)
\overline{\widehat{\mathbf u}_k(\xi)}
\,d\eta\,d\xi
\Bigg|.
\end{align*}
To estimate this trilinear term, we introduce the decomposition
\begin{align*}
1
=
\chi_k\chi_m
+
(1-\chi_k)
+
\chi_k(1-\chi_m),
\qquad m=k-\ell.
\end{align*}
Accordingly, we write
\begin{align*}
R_2=R_{2,1}+R_{2,2}+R_{2,3},
\end{align*}
where the three terms correspond to the three frequency regions determined by the above decomposition.

We first consider the contribution $R_{2,2}$. On the support of $1-\chi_k$, we have
$
\kappa^{1/3}|k|^{2/3}\gtrsim 1.
$
Thus, Lemma~\ref{leyoung} yields
\begin{align*}
R_{2,2}
\lesssim
\kappa^{-1/3}
X_{\mathbf u}\,
\operatorname{CK}_{2,\mathbf u}^{1/2}
\operatorname{CK}_{1,\mathbf u}^{1/2}.
\end{align*}
For the localized echo interaction $R_{2,1}$, we write
$k=m+\ell$, $\xi=\zeta+\eta$.
Then
\begin{align*}
R_{2,1}
:=&
\kappa^{-1/3}
\Bigg|
\sum_{\substack{m,\ell\in\mathbb Z\\ m\neq0,\ \ell\neq0}}
\int_{\mathbb R^2}
\mathbf 1_{\eqref{quanju62}}\,
\chi_m\chi_{m+\ell}\,
\sqrt{\mathcal W_m(t,\zeta)}
\Lambda_t^N(m,\zeta)
\widehat{u_m^2}(\zeta)
\notag\\
&\hspace{16mm}\cdot
|\ell|
\sqrt{\mathcal W_\ell(t,\eta)}
\widehat{\mathbf u}_\ell(\eta)
\cdot
\sqrt{\mathcal W_{m+\ell}(t,\zeta+\eta)}
\Lambda_t^N(m+\ell,\zeta+\eta)
\cdot
\overline{
\widehat{\mathbf u}_{m+\ell}(\zeta+\eta)
}
\,d\eta\,d\zeta
\Bigg|.
\end{align*}
Here the indicator $\mathbf 1_{\eqref{quanju62}}$ is evaluated at
$(k,\xi)=(m+\ell,\zeta+\eta)$.

By the Cauchy--Schwarz inequality, we obtain
\begin{align*}
R_{2,1}
\lesssim{}&
\kappa^{-1/3}
\Bigg(
\sum_{\substack{m,\ell\in\mathbb Z\\m\neq0,\ \ell\neq0}}
\int_{\mathbb R^2}
\chi_m^2
\left\langle\frac{\zeta}{m}\right\rangle
\left[
\log\left(
1+\left\langle\frac{\zeta}{m}\right\rangle
\right)
\right]^4
\notag\\
&\hspace{15mm}\times
\mathcal W_m(t,\zeta)
\Lambda_t^{2N}(m,\zeta)
|\widehat{u_m^2}(\zeta)|^2
|\ell|^2
\mathcal W_\ell(t,\eta)
\Lambda_t^{2N-2}(\ell,\eta)
|\widehat{\mathbf u}_\ell(\eta)|^2
\,d\eta\,d\zeta
\Bigg)^{1/2}
\notag\\
&\quad\times
\Bigg(
\sum_{\substack{m,\ell\in\mathbb Z\\m\neq0,\ \ell\neq0}}
\int_{\mathbb R^2}
\frac{
\chi_{m+\ell}^2
\mathcal W_{m+\ell}(t,\zeta+\eta)
\Lambda_t^{2N}(m+\ell,\zeta+\eta)
\left|
\widehat{\mathbf u}_{m+\ell}(\zeta+\eta)
\right|^2
}{
\Big(
\langle\ell\rangle^{2N-2}
+
|\eta+\ell t|^{2N-2}
\Big)
\left(
1+\left|\frac{\zeta}{m}\right|
\right)
\left[
\log\left(
1+\left\langle\frac{\zeta}{m}\right\rangle
\right)
\right]^2
}
\,d\eta\,d\zeta
\Bigg)^{1/2}.
\end{align*}
For the second factor, since $\chi_k^2\leq\chi_k$ and
$\mathcal W_k(t,\xi)\leq\mathcal A_k(t,\xi)$, the definition of
$\mathcal G_k$ implies
\begin{align*}
&\sum_{\substack{m,\ell\in\mathbb Z\\m\neq0,\ \ell\neq0}}
\int_{\mathbb R^2}
\frac{
\chi_{m+\ell}^2
\mathcal W_{m+\ell}(t,\zeta+\eta)
\Lambda_t^{2N}(m+\ell,\zeta+\eta)
\left|
\widehat{\mathbf u}_{m+\ell}(\zeta+\eta)
\right|^2
}{
\left(
\langle\ell\rangle^{2N-2}
+
|\eta+\ell t|^{2N-2}
\right)
\left(
1+\left|\frac{\zeta}{m}\right|
\right)
\left[
\log\left(
1+\left\langle\frac{\zeta}{m}\right\rangle
\right)
\right]^2
}
\,d\eta\,d\zeta
\\
&\quad\lesssim
\sum_{k\in\mathbb Z}\int_{\mathbb R}
\mathcal G_k(t,\xi)
\mathcal A_k(t,\xi)
\Lambda_t^{2N}(k,\xi)
|\widehat{\mathbf u}_k(\xi)|^2\,d\xi
=
\operatorname{CK}_{3,\mathbf u}(t).
\end{align*}
Consequently,
\begin{align*}
R_{2,1}
\lesssim&
\kappa^{-1/3}
X_{\mathbf u}(t)
\operatorname{CK}_{3,\mathbf u}^{1/2}(t)
\left(
\sum_{m\neq0}\int_{\mathbb R}
\chi_m^2
\frac{
\left\langle\frac{\zeta}{m}\right\rangle^{1/2}
}{
\log\left(
1+\left\langle\frac{\zeta}{m}\right\rangle
\right)
}
\left\langle\frac{\zeta}{m}\right\rangle
\mathcal W_m(t,\zeta)
\Lambda_t^{2N}(m,\zeta)
|\widehat{u_m^2}(\zeta)|^2\,d\zeta
\right)^{1/2}.
\end{align*}
Here we used the elementary bound
$
\left[
\log\left(1+\langle x\rangle\right)
\right]^6
\lesssim
\langle x\rangle^{1/2}.
$
Moreover, by the Cauchy--Schwarz inequality and incompressibility,
\begin{align*}
&\sum_{m\neq0}\int_{\mathbb R}
\chi_m^2
\frac{
\left\langle\frac{\zeta}{m}\right\rangle^{1/2}
}{
\log\left(
1+\left\langle\frac{\zeta}{m}\right\rangle
\right)
}
\left\langle\frac{\zeta}{m}\right\rangle
\mathcal W_m(t,\zeta)
\Lambda_t^{2N}(m,\zeta)
|\widehat{u_m^2}(\zeta)|^2\,d\zeta
\\
&\quad\leq
\left(
\sum_{m\neq0}\int_{\mathbb R}
\chi_m^2
\delta'\left(\frac{\zeta}{m}\right)
\mathcal W_m(t,\zeta)
\Lambda_t^{2N}(m,\zeta)
|\widehat{\mathbf u}_m(\zeta)|^2\,d\zeta
\right)^{1/2}
\\
&\qquad\quad\times
\left(
\sum_{m\neq0}\int_{\mathbb R}
\chi_m^2
\mathcal W_m(t,\zeta)
\Lambda_t^{2N}(m,\zeta)
|\widehat{\mathbf u}_m(\zeta)|^2\,d\zeta
\right)^{1/2}
\\
&\quad\lesssim
\operatorname{CK}_{2,\mathbf u}^{1/2}(t)
\left(
\kappa^{-1/3}
\operatorname{CK}_{1,\mathbf u}(t)
\right)^{1/2}
=
\kappa^{-1/6}
\operatorname{CK}_{2,\mathbf u}^{1/2}(t)
\operatorname{CK}_{1,\mathbf u}^{1/2}(t).
\end{align*}
Here we used
\begin{align*}
\left\langle\frac{\zeta}{m}\right\rangle^2
|\widehat{u_m^2}(\zeta)|^2
=
|\widehat{\mathbf u}_m(\zeta)|^2,
\qquad m\neq0,
\end{align*}
as well as $\chi_m^2\leq\chi_m$,
$\mathcal W_m(t,\zeta)\leq\mathcal A_m(t,\zeta)$, and $|m|\geq1$.

Therefore,
\begin{align*}
R_{2,1}
\lesssim
\kappa^{-5/12}
X_{\mathbf u}
\operatorname{CK}_{3,\mathbf u}^{1/2}
\operatorname{CK}_{2,\mathbf u}^{1/4}
\operatorname{CK}_{1,\mathbf u}^{1/4}.
\end{align*}

It remains to estimate $R_{2,3}$. On the support of $1-\chi_m$,
we have
$
\kappa^{1/3}|m|^{2/3}\gtrsim1.
$
Furthermore, incompressibility gives
\begin{align*}
\left\langle\frac{\zeta}{m}\right\rangle
\left[
\log\left(
1+\left\langle\frac{\zeta}{m}\right\rangle
\right)
\right]^4
|\widehat{u_m^2}(\zeta)|^2
\lesssim
|\widehat{\mathbf u}_m(\zeta)|^2.
\end{align*}
Consequently,
\begin{align*}
R_{2,3}
\lesssim
\kappa^{-1/3}
X_{\mathbf u}
\operatorname{CK}_{3,\mathbf u}^{1/2}
\operatorname{CK}_{1,\mathbf u}^{1/2}.
\end{align*}

Combining the estimates for $R_{2,1}$, $R_{2,2}$, and $R_{2,3}$,
we obtain
\begin{align*}
R_2
&\lesssim
\kappa^{-\frac{1}{3}} 	X_{\mathbf{u}} \mathrm{CK}_{3,\mathbf u}^{\frac{1}{2}}  \mathrm{CK}_{1,\mathbf u}^{\frac{1}{2}} + \kappa^{-\frac{1}{3}} 	X_{\mathbf{u}} \mathrm{CK}_{2,\mathbf u}^{\frac{1}{2}}  \mathrm{CK}_{1,\mathbf u}^{\frac{1}{2}}
+
\kappa^{-5/12}
X_{\mathbf u}
\operatorname{CK}_{3,\mathbf u}^{1/2}
\operatorname{CK}_{2,\mathbf u}^{1/4}
\operatorname{CK}_{1,\mathbf u}^{1/4}
\notag\\
&\lesssim \mathcal Z_8.
\end{align*}

Together with \eqref{quanju63}, this proves
\begin{align*}
\bigl|{\mathscr N}_{1,1,2}\bigr|
\lesssim
\mathcal Z_5+\mathcal Z_8
\qquad\text{in Case~2}.
\end{align*}
Finally, combining the estimates in Case~1 and Case~2 with
\eqref{quanju55}, we conclude that
\begin{equation}\label{quanju73}
\bigl|{\mathscr N}_{1,1}\bigr|
\lesssim
\mathcal Z_4+\mathcal Z_5+\mathcal Z_8+\mathcal {Z}_7.
\end{equation}

\vskip .1in
The estimate of ${\mathscr N}_{1,3}$ follows from the same
frequency decomposition, with $\mathbf u$ and $\mathbf b$
interchanged where appropriate. The only additional cubic term is
estimated by
\begin{align*}
X_{\mathbf b}X_{\mathbf b,\neq}X_{\mathbf u,\neq}
\leq
\frac12X_{\mathbf b}
\bigl(
X_{\mathbf b,\neq}^2+X_{\mathbf u,\neq}^2
\bigr)
=
\frac12(\mathcal Z_2+\mathcal Z_3).
\end{align*}
Consequently,
\begin{equation}\label{quanju74}
\bigl|{\mathscr N}_{1,3}\bigr|
\lesssim
\mathcal Z_2+\mathcal Z_3+\mathcal Z_4+\mathcal Z_8.
\end{equation}

We next estimate the magnetic stretching terms. Pairing
${\mathscr N}_{1,2}$ and
${\mathscr N}_{1,4}$ and symmetrizing, we obtain
\begin{align*}
&{\mathscr N}_{1,2}
+{\mathscr N}_{1,4}\nn\\
&\quad=
2\operatorname{Re}\sum_{k,\ell\in\mathbb Z}
\int_{\mathbb R^2}
i\Bigl[
\ell\mathcal W_k(t,\xi)\Lambda_t^{2N}(k,\xi)
-k\mathcal W_\ell(t,\eta)\Lambda_t^{2N}(\ell,\eta)
\Bigr]
\widehat{b^1_{k-\ell}}(\xi-\eta)
\widehat{\mathbf u}_\ell(\eta)
\cdot\overline{\widehat{\mathbf b}_k(\xi)}
\,d\eta\,d\xi
\notag\\
&\qquad+
2\operatorname{Re}\sum_{k,\ell\in\mathbb Z}
\int_{\mathbb R^2}
i\Bigl[
\eta\mathcal W_k(t,\xi)\Lambda_t^{2N}(k,\xi)
-\xi\mathcal W_\ell(t,\eta)\Lambda_t^{2N}(\ell,\eta)
\Bigr]
\widehat{b^2_{k-\ell}}(\xi-\eta)
\widehat{\mathbf u}_\ell(\eta)
\cdot\overline{\widehat{\mathbf b}_k(\xi)}
\,d\eta\,d\xi
\notag\\
&\quad=:
{\mathscr N}_x+{\mathscr N}_y.
\end{align*}

For the horizontal contribution, Lemma~\ref{commutator} and
Lemma~\ref{leyoung} give
\begin{equation}\label{quanju76}
\bigl|{\mathscr N}_x\bigr|
\lesssim
X_{\mathbf b,\neq}^2X_{\mathbf b}
+
X_{\mathbf u,\neq}^2X_{\mathbf b}
+
\operatorname{Dis}_1^{1/2}\operatorname{CK}_2^{1/2}X
\lesssim
\mathcal Z_2+\mathcal Z_3+\mathcal Z_4.
\end{equation}

For the vertical contribution, interchanging $(k,\xi)$ and
$(\ell,\eta)$ in one copy of the integral yields
\begin{align}\label{quanju77}
{\mathscr N}_y
=&
\operatorname{Re}\sum_{k,\ell\in\mathbb Z}
\int_{\mathbb R^2}
i\Bigl[
\eta\mathcal W_k(t,\xi)\Lambda_t^{2N}(k,\xi)
-\xi\mathcal W_\ell(t,\eta)\Lambda_t^{2N}(\ell,\eta)
\Bigr]
\widehat{b^2_{k-\ell}}(\xi-\eta)
\notag\\[-1mm]
&\hspace{20mm}\times
\left(
\widehat{\mathbf b}_\ell(\eta)
 \cdot\overline{\widehat{\mathbf u}_k(\xi)}
+
\widehat{\mathbf u}_\ell(\eta)
 \cdot\overline{\widehat{\mathbf b}_k(\xi)}
\right)
\,d\eta\,d\xi.
\end{align}

In Case~1, the multiplier, derivative, and Sobolev commutator
estimates imply
\begin{align*}
\bigl|{\mathscr N}_y\bigr|
\lesssim&
X_{\mathbf b}X_{\mathbf b,\neq}X_{\mathbf u,\neq}
+
X_{\mathbf b,\neq}^2X_{\mathbf u}
+
\operatorname{Dis}_1^{1/2}\operatorname{CK}_2^{1/2}X.
\end{align*}
Since
\begin{align*}
X_{\mathbf b}X_{\mathbf b,\neq}X_{\mathbf u,\neq}
\leq
\frac12X_{\mathbf b}
\bigl(
X_{\mathbf b,\neq}^2+X_{\mathbf u,\neq}^2
\bigr)
=
\frac12(\mathcal Z_2+\mathcal Z_3),
\end{align*}
we obtain
\begin{equation}\label{quanju79}
\bigl|{\mathscr N}_y\bigr|
\lesssim
\mathcal Z_2+\mathcal Z_3+\mathcal Z_4+\mathcal Z_6
\qquad\text{in Case~1}.
\end{equation}

In Case~2, the same argument as for ${\mathscr N}_{1,1,2}$ gives
\begin{equation}\label{quanju81}
\bigl|{\mathscr N}_y\bigr|
\lesssim
\mathcal Z_2+\mathcal Z_3+\mathcal Z_6+\mathcal Z_8
\qquad\text{in Case~2}.
\end{equation}

The remaining mixed commutator contribution satisfies
\begin{align*}
X_{\mathbf b,\neq}X_{\mathbf u,\neq}X_{\mathbf u}
=\mathcal Z_1.
\end{align*}
Collecting \eqref{quanju73}, \eqref{quanju74},
\eqref{quanju76}, \eqref{quanju79}, and \eqref{quanju81}, we conclude
that
\begin{equation}\label{quanju82}
\bigl|{\mathscr N}_1(t)\bigr|
\lesssim
\sum_{i=1}^8\mathcal Z_i(t).
\end{equation}

Combining \eqref{quanju82} with the weighted energy identity
and the estimates obtained in the preceding steps gives
\begin{align*}
\frac{d}{dt}E(t)
+c_D\operatorname{Dis}_1(t)
+c_1\operatorname{CK}_1(t)
+c_2\operatorname{CK}_2(t)
+c_3\operatorname{CK}_3(t)
\leq
C\sum_{i=1}^8\mathcal{Z}_i(t).
\end{align*}
The assumptions on $\beta$ imply
\begin{align*}
    c_D
    &=
    \frac16
    -
    \frac{\mu+\nu}{|\beta|\sqrt{\mu\nu}}
    \geq\vartheta>0,
\\
    c_1
    &=
    \frac{1}{6\pi}
    -
    \frac{7\pi+1}{6\pi|\beta|}
    =
    \frac{|\beta|-(7\pi+1)}
    {6\pi|\beta|}
    >0,
\\
    c_2
    &=
    \frac{1}{6D_\delta}
    -
    \frac{12D_\delta+1}
    {12D_\delta|\beta|}
    >0,
\\
    c_3
    &=
    1-\frac{1}{2|\beta|}
    >0.
\end{align*}
In particular, the positivity of $c_1$ follows solely from
\begin{align*}
    |\beta|>7\pi+1.
\end{align*}

Finally, the uniform equivalence of $\mathcal{W}$ and $\mathcal{A}$, together with
the control of the cross terms in the definition of $E$, yields
constants $0<c_E\leq C_E<\infty$, independent of
$t,\mu,\nu$, and $\kappa$, such that
\begin{align*}
c_EX^2(t)\leq E(t)\leq C_EX^2(t).
\end{align*}
This completes the proof of Proposition~\ref{quanju3}.
\end{proof}

\subsection{Proof of the nonlinear stability in Theorem~\ref{thm1.2}}

We first recall the local theory used below. Let $N\geq4$ and assume
that
\begin{align*}
 (\mathbf{u}_{\rm in},\mathbf{b}_{\rm in})
 \in H^N(\mathbb T\times\mathbb R),
 \qquad
 \operatorname{div}\mathbf{u}_{\rm in}
 =
 \operatorname{div}\mathbf{b}_{\rm in}=0.
\end{align*}
Then System \eqref{main} has a unique solution on some interval
$[0,T_{\rm loc}]$ such that
\begin{align}\label{quanju83}
 (\mathbf{u},\mathbf{b})
 &\in C([0,T_{\rm loc}];H^N),\qquad
 \bigl(\sqrt{\nu}\,\partial_y\mathbf{u},
       \sqrt{\mu}\,\partial_y\mathbf{b}\bigr)
 \in L^2(0,T_{\rm loc};H^N).
\end{align}
The divergence-free constraints are preserved, and the solution
extends beyond $T<\infty$ if
\begin{align}\label{quanju84}
 \int_0^T
 \left(
 \|\nabla\mathbf{u}(t)\|_{L^\infty}
 +\|\nabla\mathbf{b}(t)\|_{L^\infty}
 \right)\,dt<\infty.
\end{align}
Indeed, applying Friedrichs approximation to the Leray-projected
system and using the standard commutator estimate gives
\begin{align}\label{quanju85}
 \frac{d}{dt}
 \|(\mathbf{u},\mathbf{b})\|_{H^N}^2
 &+\nu\|\partial_y\mathbf{u}\|_{H^N}^2
 +\mu\|\partial_y\mathbf{b}\|_{H^N}^2\lesssim
 \left(
 1+\|\nabla\mathbf{u}\|_{L^\infty}
   +\|\nabla\mathbf{b}\|_{L^\infty}
 \right)
 \|(\mathbf{u},\mathbf{b})\|_{H^N}^2.
\end{align}
Here the constant term accounts for the linear Couette terms. For $N \geq 4$, the Sobolev embedding $H^N \hookrightarrow W^{1,\infty}$ ensures that the local estimates close and uniqueness holds. The \emph{a priori} weighted energy identities derived above are rigorously justified by applying them to the Friedrichs approximations and passing to the weak limit. Furthermore, on any finite interval $[0,T]$, the weighted norm $\| \Lambda_t^N (\mathbf{u},\mathbf{b})\|_{L^2}$ and the standard $H^N$ norm are equivalent. Consequently, a uniform bound on the weighted energy precludes finite-time blowup in $H^N$, immediately verifying the continuation criterion \eqref{quanju84}.

\vskip .1in
We now close the nonlinear estimates. Fix a positive constant $c$,
to be chosen below, and suppose that
\begin{align}\label{quanju86}
 \bigl\|
 \sqrt{\mathcal{W}}\Lambda_t^N
 (\mathbf{u},\mathbf{b})(t)
 \bigr\|_{L^2}
 \leq c\kappa^{1/2},
 \qquad 0\leq t\leq T.
\end{align}
The uniform bound for $\mathcal{M}_k^{(1)}(\xi)+\mathcal{M}_k^{(2)}(\xi)+\mathcal{M}_k^{(3)}(t,\xi)+\frac23$ implies
\begin{align*}
 \mathcal{W}_k(t,\xi)\approx \mathcal{A}_k(t,\xi).
\end{align*}
Since $|k|\geq1$ on the nonzero horizontal modes, the definition of
$\operatorname{CK}_1$ gives
\begin{align}\label{quanju87}
 \bigl\|
 \sqrt{\mathcal{W}}\Lambda_t^N
 (\mathbf{u}_{\neq},\mathbf{b}_{\neq})
 \bigr\|_{L^2}^2
 &\lesssim
 \bigl\|
 \sqrt{\mathcal{A}}\Lambda_t^N
 (\mathbf{u}_{\neq},\mathbf{b}_{\neq})
 \bigr\|_{L^2}^2\lesssim
 \kappa^{-1/3}\operatorname{CK}_1.
\end{align}
Similarly, every cubic remainder containing two nonzero modes
satisfies
\begin{align}\label{quanju89}
 \bigl\|
 \sqrt{\mathcal{W}}\Lambda_t^N
 (\mathbf{u}_{\neq},\mathbf{b}_{\neq})
 \bigr\|_{L^2}^2
 \bigl\|
 \sqrt{\mathcal{W}}\Lambda_t^N
 (\mathbf{u},\mathbf{b})
 \bigr\|_{L^2}\lesssim&
 c\kappa^{1/2}
 \bigl\|
 \sqrt{\mathcal{W}}\Lambda_t^N
 (\mathbf{u}_{\neq},\mathbf{b}_{\neq})
 \bigr\|_{L^2}^2\nn\\
 \lesssim& c\operatorname{CK}_1.
\end{align}
This includes $\mathcal{Z}_1,\mathcal{Z}_2,\mathcal{Z}_3,
\mathcal{Z}_5$, and $\mathcal{Z}_6$ in
Proposition~\ref{quanju3}.

We now estimate the nonlinear remainders. First, the remainder involving the physical dissipation satisfies
\begin{align}\label{quanju90}
\mathcal Z_4(t)
&=
\kappa^{-1/2}
\operatorname{Dis}_1^{1/2}(t)
\operatorname{CK}_2^{1/2}(t)
X(t)
\notag\\
&\leq
c\operatorname{Dis}_1^{1/2}(t)
\operatorname{CK}_2^{1/2}(t)
\notag\\
&\leq
\frac{c}{2}
\left(
\operatorname{Dis}_1(t)
+
\operatorname{CK}_2(t)
\right).
\end{align}
The term $\mathcal {Z}_7$ is treated separately. By the definition of $\operatorname{CK}_1(t)$, there holds
\[
H_{\neq}^2(t)
\lesssim
\kappa^{-1/3}\operatorname{CK}_1(t).
\]
Therefore,
\begin{align}\label{quanju90b}
\mathcal {Z}_7(t)
&=
\kappa^{-1/6}H_{\neq}^2(t)X(t)
\notag\\
&\lesssim
\kappa^{-1/6}
\kappa^{-1/3}
\operatorname{CK}_1(t)
\,c\kappa^{1/2}
\notag\\
&\lesssim
c\operatorname{CK}_1(t).
\end{align}
Thus $\mathcal {Z}_7$ is absorbed through $\operatorname{CK}_1$.

The echo estimates also produce the remainder
\[
\kappa^{-1/3}X_{\mathbf u}(t)
\operatorname{CK}_1^{1/2}(t)
\left(
\operatorname{CK}_2^{1/2}(t)
+
\operatorname{CK}_3^{1/2}(t)
\right).
\]
Since $X_{\mathbf u}(t)\leq X(t)\leq c\kappa^{1/2}$,
\[
\kappa^{-1/3}X_{\mathbf u}(t)
\leq
c\kappa^{1/6}
\leq c.
\]
Consequently, by Young's inequality,
\begin{align}\label{quanju90c}
&\kappa^{-1/3}X_{\mathbf u}(t)
\operatorname{CK}_1^{1/2}(t)
\left(
\operatorname{CK}_2^{1/2}(t)
+
\operatorname{CK}_3^{1/2}(t)
\right)
\notag\\
&\quad\leq
c\kappa^{1/6}
\left[
\operatorname{CK}_1^{1/2}(t)
\operatorname{CK}_2^{1/2}(t)
+
\operatorname{CK}_1^{1/2}(t)
\operatorname{CK}_3^{1/2}(t)
\right]
\notag\\
&\quad\leq
\frac{c\kappa^{1/6}}{2}
\left(
2\operatorname{CK}_1(t)
+
\operatorname{CK}_2(t)
+
\operatorname{CK}_3(t)
\right)
\notag\\
&\quad\lesssim
c\left(
\operatorname{CK}_1(t)
+
\operatorname{CK}_2(t)
+
\operatorname{CK}_3(t)
\right).
\end{align}

Finally, the second term in $\mathcal Z_8$ satisfies
\begin{align}\label{quanju91}
&\kappa^{-5/12}X(t)
\operatorname{CK}_3^{1/2}(t)
\operatorname{CK}_2^{1/4}(t)
\operatorname{CK}_1^{1/4}(t)
\notag\\
&\quad\leq
c\kappa^{1/12}
\operatorname{CK}_3^{1/2}(t)
\operatorname{CK}_2^{1/4}(t)
\operatorname{CK}_1^{1/4}(t)
\notag\\
&\quad\leq
c\kappa^{1/12}
\left(
\frac12\operatorname{CK}_3(t)
+
\frac14\operatorname{CK}_2(t)
+
\frac14\operatorname{CK}_1(t)
\right)
\notag\\
&\quad\lesssim
c\left(
\operatorname{CK}_1(t)
+
\operatorname{CK}_2(t)
+
\operatorname{CK}_3(t)
\right).
\end{align}
Combining \eqref{quanju90c} and \eqref{quanju91}, we conclude that
\begin{align}\label{quanju91c}
\mathcal Z_8(t)
\lesssim
c\left(
\operatorname{CK}_1(t)
+
\operatorname{CK}_2(t)
+
\operatorname{CK}_3(t)
\right).
\end{align}

Substituting \eqref{quanju87}--\eqref{quanju91c} into the energy
inequality of Proposition~\ref{quanju3}, we obtain
\begin{align}\label{quanju92}
&\frac{d}{dt}E(t)
+
\left(
\frac16-\frac{\mu+\nu}{|\beta|\sqrt{\mu\nu}}
\right)\operatorname{Dis}_1(t)
+
\left(
\frac{1}{6\pi} - \frac{7\pi+1}{6\pi|\beta|}
\right)\operatorname{CK}_1(t)
\notag\\
&\qquad+
\left(
\frac{1}{6D_\delta} - \frac{12D_\delta+1}{12D_\delta|\beta|}
\right)\operatorname{CK}_2(t)
+
\left(
1-\frac{1}{2|\beta|}
\right)\operatorname{CK}_3(t)
\notag\\
&\quad\leq
Cc\left(
\operatorname{Dis}_1(t)
+\operatorname{CK}_1(t)
+\operatorname{CK}_2(t)
+\operatorname{CK}_3(t)
\right),
\qquad 0\leq t\leq T.
\end{align}
By the assumptions on the background magnetic field,
\begin{align*}
\frac{6(\mu+\nu)}{|\beta|\sqrt{\mu\nu}}
\leq 1-6\vartheta,
\end{align*}
and hence
\begin{align*}
\frac16-\frac{\mu+\nu}{|\beta|\sqrt{\mu\nu}}
\geq\vartheta>0.
\end{align*}
Moreover, the condition $|\beta|>7\pi+1$ implies
\begin{align*}
\frac{1}{6\pi} - \frac{7\pi+1}{6\pi|\beta|}>0,
\qquad
\frac{1}{6D_\delta} - \frac{12D_\delta+1}{12D_\delta|\beta|}>0,
\qquad
1-\frac{1}{2|\beta|}>0.
\end{align*}
Define the uniform coercivity constant
\begin{align*}
c_*
:=
\min\left\{
\vartheta,\,
\frac{1}{6\pi} - \frac{7\pi+1}{6\pi|\beta|},\,
\frac{1}{6D_\delta} - \frac{12D_\delta+1}{12D_\delta|\beta|},\,
1-\frac{1}{2|\beta|}
\right\}>0.
\end{align*}
The constant $c_*$ depends only on $\beta$ and $\vartheta$ and is,
in particular, independent of $\mu$, $\nu$, and $\kappa$.  We now
choose the bootstrap constant $c>0$ sufficiently small so that
\begin{align*}
Cc\leq\frac{c_*}{2},
\end{align*}
where $C=C(N, \beta, \vartheta)$ is the
constant in Proposition~\ref{quanju3}.  It follows from
\eqref{quanju92} that
\begin{align*}
\frac{d}{dt}E(t)
+
\frac{c_*}{2}
\left(
\operatorname{Dis}_1(t)
+\operatorname{CK}_1(t)
+\operatorname{CK}_2(t)
+\operatorname{CK}_3(t)
\right)
\leq0,
\qquad 0\leq t\leq T.
\end{align*}
Integrating over $[0,t]$, we conclude that
\begin{align*}
E(t)
+
\frac{c_*}{2}
\int_0^t
\left(
\operatorname{Dis}_1
+\operatorname{CK}_1
+\operatorname{CK}_2
+\operatorname{CK}_3
\right)(\tau)\,d\tau
\leq E(0),
\qquad 0\leq t\leq T.
\end{align*}

By the coercivity estimate in Proposition~\ref{quanju3} and the
uniform equivalence $\mathcal{W}(0,k,\xi)\approx1$, there exists a
constant $C$, independent of $\kappa,\mu,\nu$, such that
\begin{align}\label{quanju95}
 \bigl\|
 \sqrt{\mathcal{W}}\Lambda_t^N
 (\mathbf{u},\mathbf{b})(t)
 \bigr\|_{L^2}^2
 \lesssim E(t)
 \leq E(0)
 \lesssim
 \|(\mathbf{u}_{\rm in},\mathbf{b}_{\rm in})\|_{H^N}^2
 \leq C\varepsilon_0^2\kappa.
\end{align}
Having fixed $c$, choose $\varepsilon_0>0$ sufficiently small that
the right-hand side of \eqref{quanju95} is at most
$\frac14c^2\kappa$. It follows that
\begin{align*}
 \bigl\|
 \sqrt{\mathcal{W}}\Lambda_t^N
 (\mathbf{u},\mathbf{b})(t)
 \bigr\|_{L^2}
 \leq\frac{c}{2}\kappa^{1/2},
 \qquad 0\leq t\leq T.
\end{align*}
This strictly improves \eqref{quanju86}. The local theory and a
standard continuity argument therefore extend the solution and the
weighted estimate to all $t\geq0$.

It remains to recover the estimates stated in the theorem. Since
$\mathcal{W}_k(t,\xi)\gtrsim1$, the global weighted bound gives
\begin{align*}
 \bigl\|
 \Lambda_t^N
 (\mathbf{u},\mathbf{b})(t)
 \bigr\|_{L^2}
 \lesssim\varepsilon_0\kappa^{1/2}.
\end{align*}
This is \eqref{GWP}. On the nonzero horizontal modes,
\begin{align*}
 \mathcal{W}_k(t,\xi)
 \approx \mathcal{A}_k(t,\xi)
 =
 e^{\frac{1}{32}\kappa^{1/3}t},
 \qquad k\neq0.
\end{align*}
Consequently,
\begin{align}\label{quanju98}
 \bigl\|
 \Lambda_t^N
 (\mathbf{u}_{\neq},\mathbf{b}_{\neq})(t)
 \bigr\|_{L^2}
 \lesssim
 \varepsilon_0\kappa^{1/2}
 e^{-\frac{1}{64}\kappa^{1/3}t},
\end{align}
which proves \eqref{enhancedis}.

Finally, the divergence-free conditions imply, for $k\neq0$,
\begin{align*}
 k\widehat{{u}}_{\neq}^{\,1}
 +\xi\widehat{{u}}_{\neq}^{\,2}=0,
 \qquad
 k\widehat{{b}}_{\neq}^{\,1}
 +\xi\widehat{{b}}_{\neq}^{\,2}=0.
\end{align*}
Hence
\begin{align*}
 t|k|\,
 \bigl|
 \widehat{{u}}_{\neq}^{\,2}
 \bigr|
 \leq
 |k|\,
 \bigl|
 \widehat{{u}}_{\neq}^{\,1}
 \bigr|
 +
 |\xi+kt|\,
 \bigl|
 \widehat{{u}}_{\neq}^{\,2}
 \bigr|,
\end{align*}
and the same estimate holds for
$\widehat{{b}}_{\neq}^{\,2}$. Since $|k|\geq1$, it follows
from \eqref{quanju98} that
\begin{align*}
 &\bigl\|
 ({u}_{\neq}^{\,1},
  {b}_{\neq}^{\,1})(t)
 \bigr\|_{L^2}
 +
 \langle t\rangle
 \bigl\|
 ({u}_{\neq}^{\,2},
  {b}_{\neq}^{\,2})(t)
 \bigr\|_{L^2}\lesssim
 \varepsilon_0\kappa^{1/2}
 e^{-\frac{1}{64}\kappa^{1/3}t}.
\end{align*}
This proves \eqref{invisciddamping} and completes the proof of
Theorem \ref{thm1.2}.\hfill $\square$

\bigskip

\vskip .2in
\section*{Acknowledgement}
\noindent{J. Wu was partially supported by the National Science Foundation of the United States under Grants DMS 2104682 and DMS 2309748. X. Zhai was partially supported by  the Guangdong Provincial Natural Science Foundation under grant 2024A1515030115. }

 \vskip .2in
\noindent{\bf Data Availability Statement} Data sharing is not applicable to this article as no
data sets were generated or analysed during the current study.

\vskip .2in

\noindent{\bf Conflict of Interest} The authors declare that they have no conflict of interest.

\end{document}